\documentclass[11pt]{amsart}
\usepackage{amsmath, amssymb, amsthm}
\usepackage{graphicx}
\usepackage{enumitem}
\usepackage{float}

\usepackage[a4paper, margin=1in]{geometry}

\title{The irreducibility and monodromy of some families of linear series with imposed
ramifications}
\author{Xiaoyu Hu}
\date{\today}

\newtheorem{theorem}{Theorem}[section]
\newtheorem{lemma}[theorem]{Lemma}

\newtheorem{proposition}[theorem]{Proposition}

\theoremstyle{definition}
\newtheorem{definition}[theorem]{Definition}
\newtheorem{example}[theorem]{Example}
\newtheorem{remark}[theorem]{Remark}

\begin{document}

\begin{abstract}
Suppose that the adjusted Brill-Noether number is zero,
we prove that there exists a family of twice-marked smooth projective curves such that
the family of linear series with two imposed ramification conditions 
is irreducible. Moreover, under certain  conditions, we show that the monodromy group
contains the alternating group. In the case $r=1$, the monodromy group is the 
full symmetric group.
\end{abstract}

\maketitle

\section{Introduction}

Let $C$ be a smooth projective curve of genus $g$ over 
 $\mathbb{C}$,
 then all the linear series of degree $d$ and rank $r$  on $C$ 
 form a projective moduli space $G^r_d(C)$, which is known as the Brill-Noether locus. 
 The classical Brill-Noether theorem\cite{GH80} states that,
  if $C$ is a general projective curve
  and
the Brill-Noether number 
\begin{equation*}
    \rho(g,r,d)=g-(r+1)(g-d+r)\geq 0,
\end{equation*}
 then $G^r_d(C)$ is a smooth variety of dimension exactly $\rho(g,r,d)$\cite{Gi82}.
 Moreover, if $\rho(g,r,d)>0$, the variety $G^r_d(C)$ is also irreducible.

Eisenbud and Harris\cite{EH87} considered the analogy of 
such irreducibility when $\rho(g,r,d)=0$. If $\rho(g,r,d)=0$, then 
the variety $G^r_d(C)$ is reducible and precisely consists of
\begin{equation*}
    N(g,r,d)=g!\prod_{i=0}^r\frac{i!}{(g-d+r+i)!}
\end{equation*}
reduced points. However, if we consider a family of curves $\mathcal{C}\to B$,
the corresponding family of linear series 
$G^r_d(\mathcal{C}/B)$ may be irreducible. 
Eisenbud and Harris proved the following theorem:
\begin{theorem}[\cite{EH87}, Theorem 1]
There exists a family of smooth projective curves $\mathcal{C}/B$ such that 
$G^r_d(\mathcal{C}/B)$ is irreducible.
\end{theorem}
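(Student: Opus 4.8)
We first reduce the statement to a monodromy computation. Take $B$ to be a dense open subscheme of a suitable finite level cover of $\mathcal{M}_g$ (so that $B$ is an irreducible scheme carrying a universal curve $\mathcal{C}\to B$ with smooth fibres), and let $B^\circ\subseteq B$ be the complement of the branch locus, i.e. the locus over which every fibre carries exactly $N=N(g,r,d)$ reduced $\mathfrak{g}^r_d$'s. Over $B^\circ$ the natural map $\pi\colon G^r_d(\mathcal{C}/B)\to B$ is finite étale of degree $N$, so $G^r_d(\mathcal{C}/B)$ is irreducible if and only if the monodromy representation $\pi_1(B^\circ,b_0)\to\mathrm{Sym}(\pi^{-1}(b_0))$ is transitive. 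To get a handle on this representation I would extend the family across a partial compactification $\overline{B}\subseteq\overline{\mathcal{M}}_g$ whose added points parametrise stable curves of compact type; by the Eisenbud--Harris theory of limit linear series, $G^r_d$ extends to a scheme $\overline{G^r_d}\to\overline{B}$ that is still finite over the compact-type locus, and, $\rho$ being $0$, generically reduced of degree $N$. The monodromy is then detected by small loops in $B^\circ$ around the branch divisor and, crucially, around the boundary divisors of $\overline{B}$.

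Next I would pin down one convenient special fibre. Let $[C_0]\in\overline{B}$ be the point corresponding to a chain $C_0=E_1\cup_{p_1}E_2\cup_{p_2}\cdots\cup_{p_{g-1}}E_g$ of $g$ general elliptic curves glued at general points. By the basic dimension estimate for limit linear series, $\overline{G^r_d}(C_0)$ is finite and reduced, and a refined limit $\mathfrak{g}^r_d$ on $C_0$ is completely recorded by the vanishing sequences at the nodes $p_i$; the additivity inequality (which, when $\rho=0$, must be an equality at every node) and the compatibility conditions translate, after the standard bookkeeping, into a standard Young tableau on the rectangle $R$ with $r+1$ rows and $g-d+r$ columns — note $\#R=(r+1)(g-d+r)=g-\rho=g$. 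The hook-length formula recovers $\#\,\mathrm{SYT}(R)=g!\prod_{i=0}^r i!/(g-d+r+i)!=N$, so we obtain a concrete identification of $\pi^{-1}(b_0)$ with $\mathrm{SYT}(R)$.

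With this dictionary in hand I would compute the boundary monodromy. For each $k$ there is a boundary divisor $\Delta^{(k)}$ along which the elliptic component $E_k$ degenerates (e.g. acquires a node), and a small loop around a general point of $\Delta^{(k)}$ induces a permutation $\tau_k$ of $\mathrm{SYT}(R)$. A local Picard--Lefschetz-type analysis of the Eisenbud--Harris space of limit series near $\Delta^{(k)}$ should show that $\tau_k$ is the involution interchanging the boxes occupied by $k$ and $k+1$ whenever those boxes are incomparable (neither in the same row nor the same column), and the identity otherwise. It then remains to record a purely combinatorial fact: the group $\langle\tau_1,\dots,\tau_{g-1}\rangle$ acts transitively on $\mathrm{SYT}(R)$. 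Indeed, if a tableau $T$ is not the row-filling one, its (row-)reading word has an inversion, forcing some consecutive pair $k,k+1$ to sit in incomparable boxes with $k+1$ preceding $k$; applying $\tau_k$ strictly decreases the number of inversions, so by downward induction every tableau is carried to the row-filling tableau. Transitivity of the monodromy of $\pi$, and hence irreducibility of $G^r_d(\mathcal{C}/B)$, follows. (Alternatively one can run the whole argument by induction on $g$, degenerating off a single elliptic tail and combining the inductive transitivity in genus $g-1$ with the monodromy coming from the modulus of the tail.)

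The genuine obstacle is the local statement in the third step. One must first verify that $\overline{G^r_d}$ is actually reduced along the relevant boundary strata — equivalently, that the one-parameter smoothings meet the branch locus transversally — and then identify explicitly the permutation of the finite set of limit series produced by varying a single elliptic component. This is exactly where the smoothing theory of limit linear series does the real work: controlling, deformation-theoretically, how a refined limit series on $C_0$ spreads to nearby smooth curves, and what monodromy results. By contrast, the translation into tableau combinatorics and the transitivity lemma above are comparatively routine once that input is in place.
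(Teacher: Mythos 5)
Your reduction to a transitivity statement for the monodromy, and the tableau bookkeeping (including the hook-length count $N$), are sound and broadly parallel to the strategy the paper uses for its generalization (Theorem \ref{irred}, which recovers this statement when $\alpha=\beta=0$). But the heart of the argument --- identifying which permutations of the fibre the chosen degenerations actually produce --- is precisely the step you label ``should show'', and as stated it is both unproved and not what the geometry delivers. Two concrete problems. First, your boundary divisors $\Delta^{(k)}$, along which the elliptic component $E_k$ acquires a node, parametrize curves that are \emph{not} of compact type (the node is non-separating), so the Eisenbud--Harris space of limit linear series you invoke does not extend over those strata, and the proposed ``Picard--Lefschetz-type analysis'' cannot be run with the tools you cite. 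The paper (following Eisenbud--Harris) instead stays inside the compact-type locus: the special fibre is a chain of rational curves $Y_i$ with elliptic tails $E_i$, and the monodromy comes from the one-parameter families $C_{i,p}$ obtained by moving the attachment point $p$ of one elliptic tail along $Y_i$.

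Second, even where such an analysis is available, a single loop does not give the full Bender--Knuth involution $\tau_k$ (swap $k,k+1$ whenever incomparable). Lemma \ref{branch} shows that the space of relevant series over the $p$-line is a double cover whose branch points are determined by the difference $a=a_j-a_i$; hence one loop yields only the distance-restricted transposition $\pi_{t,a}$, exchanging $t$ and $t+1$ solely in those tableaux where they are incomparable \emph{and} at the specific distance $a$. Your $\tau_k$ is the product of the $\pi_{t,a}$ over all $a$, so the combinatorial transitivity lemma you prove is for generators you have not shown to lie in the monodromy group from a single vanishing cycle; with the generators the geometry actually provides, one must redo the combinatorics, which the paper does via a lexicographic descent (Proposition \ref{transitive}). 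In short, the translation to tableaux and the transitivity of $\langle\tau_k\rangle$ are the routine parts; the missing content is exactly the determination of the local monodromy (Lemma \ref{branch}) together with properness/reducedness of the limit-series space along compact-type degenerations, and your proposal does not supply it.
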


In this paper, we generalize this theorem to the case of linear series
 with imposed
 ramification conditions at two generic points. Let $\alpha,\beta$ be two ramification sequences. Given
  a smooth projective curve $C$ and two distinct points $p,q$ on $C$, 
 all the linear series of degree $d$ and rank $r$ on $C$ with ramification
  sequence at least $\alpha$ at $p$ and at least $\beta$ at $q$ also
  form a  projective moduli space $G^{r}_d(C,(p,\alpha),(q,\beta))$\cite{EH86}. 
  Eisenbud and Harris\cite{EH86} proved that
if $C$ is a general projective curve,
 then $G^{r}_d(C,(p,\alpha),(q,\beta))$ has dimension exactly
\begin{equation*}
\rho(g,r,d,\alpha,\beta)=g-(r+1)(g-d+r)-|\alpha|-|\beta|,
\end{equation*}
which is often called the adjusted Brill-Noether number.

Now we consider the case of $\rho(g,r,d,\alpha,\beta)=0$. In this case, $G^{r}_d(C,(p,\alpha),(q,\beta))$ consists of precisely 
$N(g,r,d,\alpha,\beta)$ reduced points. We will
 prove the following theorem:

\begin{theorem}[Theorem \ref{irred}]\label{irred2}
    Let $\alpha,\beta$ be two ramification sequences, and suppose
    \begin{equation*}
    \rho(g,r,d,\alpha,\beta)=g-(r+1)(g-d+r)-|\alpha|-|\beta|=0.
    \end{equation*}
    Then there exists a family of twice-marked smooth projective curves
    $(\mathcal{C}/B,p,q)$ such that $G^r_d(\mathcal{C}/B,(p,\alpha),(q,\beta))$
     is
    irreducible.
\end{theorem}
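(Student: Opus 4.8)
The plan is to realise $G^r_d(\mathcal C/B,(p,\alpha),(q,\beta))$ as a branched cover of a parameter space of twice-marked smooth curves, and to prove its irreducibility by showing that the monodromy of this cover acts transitively on a general fibre; transitivity will be produced by degenerating to a chain of elliptic curves and reading off the local monodromy around the boundary of $\overline{\mathcal M}_{g,2}$, in the spirit of \cite{EH87}. Concretely, let $B$ be a parameter space carrying a family $(\mathcal C/B,p,q)$ of twice-marked smooth genus-$g$ curves whose classifying map to $\mathcal M_{g,2}$ is dominant (for instance an \'etale neighbourhood of the generic point, or a Hilbert-scheme stratum) and whose general member is general in moduli. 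By the dimension theorem for linear series with imposed ramification \cite{EH86}, all fibres of $G^r_d(\mathcal C/B,(p,\alpha),(q,\beta))\to B$ have dimension $\rho(g,r,d,\alpha,\beta)=0$, and over a dense open $B^{\circ}\subset B$ the fibres are reduced of length $N:=N(g,r,d,\alpha,\beta)$; thus $\pi\colon G^r_d(\mathcal C/B^{\circ},(p,\alpha),(q,\beta))\to B^{\circ}$ is finite \'etale of degree $N$. Since $B^{\circ}$ is irreducible and the total space is equidimensional over $B$, it follows that $G^r_d(\mathcal C/B,(p,\alpha),(q,\beta))$ is irreducible if and only if the monodromy representation $\pi_1(B^{\circ})\to S_N$ of $\pi$ has transitive image.

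Next I would degenerate. Fix a general chain of elliptic curves $C_0=E_1\cup_{x_1}\cdots\cup_{x_{g-1}}E_g$ with general attaching points $x_i$ and general marked points $p\in E_1$, $q\in E_g$; this $(C_0,p,q)$ lies on the transverse intersection of the $g-1$ boundary divisors $\Delta_1,\dots,\Delta_{g-1}$ of $\overline{\mathcal M}_{g,2}$ indexed by the nodes. After an \'etale base change over $B^{\circ}$ resolving the normal crossings at $[C_0]$, $\pi$ extends to a finite morphism $\bar\pi\colon\overline G\to\overline B$ over a neighbourhood of $[C_0]$, and by the theory of limit linear series with imposed ramification \cite{EH86} its fibre over $[C_0]$ is the set $\mathcal T$ of refined limit $\mathfrak g^r_d$'s on $C_0$ with ramification at least $\alpha$ at $p$ and at least $\beta$ at $q$. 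Each such limit series is recorded by the sequence of ramification profiles of its aspects at $x_1,\dots,x_{g-1}$; precisely as in \cite{EH87}, but with the two outermost profiles constrained by $\alpha$ and $\beta$ rather than unconstrained, these sequences are chains of elementary moves of sub-diagrams of the $(r+1)\times(g-d+r)$ rectangle, i.e.\ standard tableaux of a fixed skew shape determined by $\alpha$ and $\beta$, and a Schubert-calculus count recovers $\#\mathcal T=N$. For general $C_0,p,q$ every element of $\mathcal T$ is refined, hence smoothable, so $\bar\pi$ is finite and flat of degree $N$ near $[C_0]$ with reduced special fibre $\mathcal T$.

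To obtain transitivity, let $\sigma_i\in S_N$ be the local monodromy around $\Delta_i$, i.e.\ the image under $\pi_1(B^{\circ})\to S_N$ of a small loop encircling $\Delta_i$. Smoothing the single node $x_i$ alters only the $i$-th profile in the recording sequence of a limit series, so $\sigma_i$ is an involution of $\mathcal T$: it interchanges the two limit series obtained by the two possible completions at step $i$ whenever that profile is not forced, and fixes every other element --- equivalently $\sigma_i$ is the product of the disjoint transpositions attached to the ``elementary switches'' at position $i$. Since any two standard skew tableaux of a fixed shape are joined by a finite sequence of such switches, the graph on $\mathcal T$ whose edges are these switches is connected, and already $\langle\sigma_1,\dots,\sigma_{g-1}\rangle$ acts transitively on $\mathcal T$. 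Hence the monodromy of $\pi$ is transitive and $G^r_d(\mathcal C/B,(p,\alpha),(q,\beta))$ is irreducible, which proves the theorem; a finer bookkeeping of the same local contributions then pins down the monodromy group itself, yielding the alternating- and (for $r=1$) symmetric-group statements of the abstract.

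The step I expect to be the real obstacle is the second one, together with the precise form of the local monodromy at the outermost nodes $x_1$ and $x_{g-1}$. What is needed is a regeneration/transversality argument showing that, in a neighbourhood of the chain of elliptic curves, the relative scheme carrying both imposed ramifications is genuinely finite and flat --- with no excess-dimensional component and with reduced special fibre of the expected length $N$ --- and that smoothing each node induces exactly the switch described above; at $x_1$ and $x_{g-1}$ the imposed ramification $\alpha$ (resp.\ $\beta$) sits on the component being merged, so the one-nodal analysis is no longer the fully generic one of \cite{EH87} and must be redone, essentially a versal-deformation computation as in \cite{EH86}. The accompanying combinatorial input --- connectedness under elementary switches of the standard skew tableaux in exactly the range of shapes pinned down by $\rho(g,r,d,\alpha,\beta)=0$ --- is comparatively soft but still requires verification.
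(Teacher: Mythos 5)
Your reduction to transitivity of the monodromy and the tableau bookkeeping of limit linear series are in the spirit of the paper, but the step that actually produces nontrivial monodromy is wrong as stated, and it is exactly the step you flagged as the obstacle. You take $\sigma_i$ to be the local monodromy of the cover around the boundary divisor $\Delta_i$ and assert that it is a nontrivial involution interchanging ``the two possible completions at step $i$''. This contradicts your own regeneration claim: if $\bar\pi$ is finite and flat near $[C_0]$ with \emph{reduced} special fibre of length $N$, then it is \'etale over a neighbourhood of $[C_0]$, so every small loop around $\Delta_i$ acts trivially on that fibre. More intrinsically, smoothing a node does not present a binary choice: by the Eisenbud--Harris smoothing theory each refined limit $\mathfrak{g}^r_d$ on the nodal curve deforms to a unique nearby linear series, so the cover extends unramified over the general point of $\Delta_i$ and no transposition arises from circling it. Hence your proposed generators $\sigma_1,\dots,\sigma_{g-1}$ are trivial and the transitivity argument collapses.

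The paper obtains its monodromy from a different source: not from loops around boundary divisors of $\overline{\mathcal M}_{g,2}$, but from one-parameter families \emph{inside} the boundary. Its degenerate curve is not a chain of elliptic curves but $C_\infty$, a chain of rational curves $Y_1,\dots,Y_{g+1}$ (with $p\in Y_1$, $q\in Y_{g+1}$) carrying elliptic tails $E_1,\dots,E_g$; on the rational components the aspects are points of intersections of Schubert cycles in $\mathrm{Gr}(r,d)$, and the limit series with the two imposed ramifications biject with standard tableaux of a skew shape (Theorem \ref{ytcorrsp}). The families $C_{i,p}$ are obtained by sliding an attaching point on a rational component (varying a cross-ratio); by Lemma \ref{branch} (Eisenbud--Harris), the relative space of limit series over the $p$-line is a double cover branched at two points, and circling a branch point genuinely swaps two sheets. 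This is what yields the operators $\pi_{t,a}$ exchanging the entries $t$ and $t+1$ in all tableaux where they sit at distance $a$ in distinct rows and columns, and a lexicographic-order induction (Proposition \ref{transitive}) then gives transitivity, after which one invokes birational invariance of the monodromy group to transfer the statement to the family of smooth curves. If you wish to salvage the chain-of-elliptic-curves degeneration, you would need an analogous genuinely branched one-parameter family within the boundary (for instance, moving attaching points on an elliptic component through torsion differences) together with a proof of which permutations it realizes; the divisorial local monodromy by itself gives nothing. The remaining combinatorial claim (connectivity under switches) is indeed soft, but note that the available moves swap $t,t+1$ simultaneously in \emph{all} tableaux where they are at a prescribed distance, a refinement that matters later for the double-transitivity and alternating-group statements.
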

Moreover, we can demonstrate that, in some cases, the monodromy group of the family given in the proof of Theorem \ref{irred2} is at least the alternating group. 
This result also generalizes Edidin's theorem (\cite{Ed91}, Theorem 2).

\begin{theorem}[Theorem \ref{monodromyalter}, Proposition \ref{rank1case}]\label{monodromyalter2}
    Given two ramification sequences $\alpha,\beta$. Suppose
    \begin{equation*}
    \rho(g,r,d,\alpha,\beta)=g-(r+1)(g-d+r)-|\alpha|-|\beta|=0.
    \end{equation*}
     Let $N=N(g,r,d,\alpha,\beta)$.
    If $\alpha,\beta$ satisfy the conditions of
     Lemma \ref{generaltransitive},
    then the monodromy group of 
    $G^r_d(\mathcal{C}/B,(p,\alpha),(q,\beta))$ in
     Theorem \ref{irred2} is either the alternating group $A_N$ or 
     the symmetric group $S_N$. Moreover, if $r=1$, then the monodromy group 
     is $S_N$.
\end{theorem}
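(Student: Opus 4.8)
The plan is to deduce the result from the classical theorems of Jordan on primitive permutation groups: a primitive subgroup of $S_N$ containing a transposition equals $S_N$, and a primitive subgroup of $S_N$ containing a $3$-cycle contains $A_N$. So I would reduce to two statements about the monodromy group $M$: (i) $M$ is primitive, and (ii) $M$ contains an element of small support -- a transposition when $r=1$, and a $3$-cycle in general. Transitivity of $M$ is not an extra hypothesis: restricting to the dense open $B^{\circ}\subseteq B$ over which the fibre of $G^r_d(\mathcal{C}/B,(p,\alpha),(q,\beta))$ consists of $N$ reduced points, this family is a finite \'etale $N$-sheeted cover of $B^{\circ}$ whose monodromy is $M$, and its irreducibility, guaranteed by Theorem~\ref{irred2}, is precisely the transitivity of $M$.

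For the primitivity step I would invoke Lemma~\ref{generaltransitive}. I expect its content, under the stated hypotheses on $\alpha,\beta$, to be a transitivity statement strong enough to yield $2$-transitivity (hence primitivity) of $M$: the natural way to prove such a lemma is to choose an arc in a compactification $\overline{B}$ meeting a boundary divisor transversally, along which the twice-marked curve degenerates so that the $N$ limit linear series stay distinct but become organized into a ``residual'' family with strictly smaller invariants, and then to show, by applying Theorem~\ref{irred2} to that residual family, that the monodromy still acts transitively after fixing one sheet. The hypotheses on $\alpha,\beta$ should be exactly the nondegeneracy conditions ensuring that this residual family is nonempty and that the induction on the invariants closes. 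I would run the argument using the explicit description of the $N$ points as limit linear series on the chains of elliptic (or rational) curves underlying the proof of Theorem~\ref{irred2}, where ``fixing a sheet'' corresponds to fixing one combinatorial label (a skew standard tableau, or a lattice path) and the residual family indexes the remaining labels.

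For step (ii) I would compute a local monodromy. Inside $\overline{B}$, pick a one-parameter family transverse to a boundary divisor $\Delta$ whose special twice-marked curve is chosen so that exactly two of the $N$ limit linear series collide (for $r=1$) or exactly three collide (in general), and so that the cover $G^r_d(\mathcal{C}/B,(p,\alpha),(q,\beta))\to B$ is totally ramified there -- locally $y^2=t$, resp.\ $y^3=t$, in a coordinate $t$ transverse to $\Delta$. Then the monodromy of a small loop around $\Delta$ is a transposition, resp.\ a $3$-cycle, and so lies in $M$. For $r=1$ the relevant degeneration is the familiar one -- smoothing a single node of the elliptic chain underlying the construction, or in the $\mathcal{M}_{g,2}$-picture pulling off an elliptic tail at a general point -- and the theory of limit linear series of Eisenbud--Harris shows the two colliding pencils meet with reduced (simple) tangency, so the local model really is $y^2=t$ and one gets a genuine transposition; this is the ramified-points analogue of Edidin's argument in \cite{Ed91}, and it is the only place where $r=1$ is essentially used. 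For general $r$ the analogous collision need not be reduced, which is why one produces a $3$-cycle instead and the conclusion is only ``$A_N$ or $S_N$''.

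The step I expect to be the main obstacle is primitivity in the general-$r$ case: a priori $M$ could preserve a block structure refining the combinatorial labels of the $N$ points, and ruling this out requires the degeneration-plus-induction argument above to be carried out carefully, checking both that the hypotheses on $\alpha,\beta$ descend to the residual family and that that family is itself irreducible so that Theorem~\ref{irred2} applies. A secondary difficulty is the limit-linear-series computation verifying that, for $r=1$, the two colliding pencils meet with multiplicity exactly one along $\Delta$; if the collision were of higher order the local monodromy could be trivial or a longer cycle, and the argument for $S_N$ would fail.
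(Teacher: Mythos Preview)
Your high-level skeleton---double transitivity plus a small-support element plus a classical group-theoretic criterion---matches the paper, but the execution diverges in two significant ways, and one of them is a genuine gap.

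First, the paper's argument is entirely combinatorial, not geometric. Lemma~\ref{generaltransitive} is not proved by degenerating to a residual family and inducting on invariants; it is proved directly on the set of standard Young tableaux of the skew shape $\sigma(g,r,d,\alpha,\beta)$, by explicitly manipulating the EH generators $\pi_{t,a}$ (swap $t$ and $t+1$ in every tableau where they sit at distance $a$ in different rows and columns) to drag an arbitrary tableau $T\neq S$ down in a lexicographic order while holding the maximal tableau $Z$ fixed. The conditions on $\alpha,\beta$ in the lemma are not induction-closing nondegeneracy conditions; they are precisely the inequalities needed to make the various case distinctions in this tableau manipulation go through. Your proposed geometric mechanism for $2$-transitivity is not what is happening.

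Second, and more seriously: for general $r$ you propose producing a $3$-cycle via a local monodromy computation at a boundary point where exactly three limit series collide. The paper does not do this, and I do not see how to make it work---there is no evident degeneration with that behavior, and you yourself flag the collision multiplicity as uncontrolled. The paper sidesteps the issue by invoking Bochert's theorem rather than Jordan's: a doubly transitive subgroup of $S_N$ containing a nontrivial element moving fewer than $N/3 - 2\sqrt{N}/3$ points must be $A_N$ or $S_N$. The required element is again built combinatorially (Lemma~\ref{movefew}): one exhibits a specific $\pi_{s+t-2,L}$ or $\pi_{s+t-1,L}$ and shows, by an explicit injection on tableaux, that it moves at most $N/4$ of them; a hook-length estimate then gives $N\geq 231$, so $N/4 < N/3 - 2\sqrt{N}/3$. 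No $3$-cycle is ever produced.

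For $r=1$ your plan is closer in spirit: the paper does obtain a genuine transposition, but again combinatorially---when the shape has two rows, the element $\pi$ above moves exactly one pair of tableaux---and then conjugates by double transitivity to get all transpositions. Your geometric transposition-from-a-simple-collision would also work in principle, but the paper does not need to verify any local multiplicity-one statement.
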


Brill-Noether loci with fixed ramification have been studied in some 
literature, such as \cite{CAPB15},\cite{Bi23}.

\subsection*{Notational conventions}
We mention some conventions that will be used throughout the paper.
\begin{enumerate}
  \item The number $g,r,d$ will always be nonnegative and satisfy $g-d+r\geq 0$.
  \item The symbols $a=(a_i)_i$, $b=(b_i)_i$ will refer to vanishing sequences of a linear series. 
  The numbers $a_i$ and $b_i$ will be in increasing order. The symbols 
  $\alpha=(\alpha_i)_i$, $\beta=(\beta_i)_i$ will refer to the corresponding
  ramification sequences of a linear series, defined by $\alpha_i=a_{r-i}-(r-i)$,
  $\beta_i=b_{r-i}-(r-i)$. 
  The numbers $\alpha_i$ and $\beta_i$ will be in nonincreasing order.
\end{enumerate}
\subsection*{Structure of the paper}
 In Section~\ref{sec:prelim}, we proved the  necessary background on limit linear 
 series. Section 3 explores the combinatorics of limit linear series
 and monodromy actions on a certain reducible curve. 
 In Section 4, we prove the irreducibility result and establish
  some technical combinatorial 
 lemmas. Finally, in Section 5, we prove
  our main results about monodromy groups.

\subsection*{Acknowledgements}
I would like to thank Xiang He for suggesting this problem and 
helpful conversations.

\section{Preliminaries}\label{sec:prelim}

Let $C$ be a smooth projective curve over $\mathbb{C}$.
\begin{definition}
    A \textit{linear series} on a curve $C$ of degree $d$ and rank $r$ is a
     pair $(\mathcal{L}, V)$, 
    where $\mathcal{L}$ is a line bundle of degree $d$ on $C$,
     and $V$ is an $(r+1)$-dimensional 
    subspace of $\mathrm{H}^0(C,\mathcal{L})$.
  \end{definition}
  A linear series of degree $d$ and rank $r$ is historically
   denoted as $\mathfrak{g}^r_d$.
  
  For any linear series, we can define 
  its vanishing sequence and ramification sequence at each point.
  \begin{definition}
    Let $L=(\mathcal{L},V)$ be a  $\mathfrak{g}^r_d$ on a curve $C$. For any 
    $p\in C$, the \textit{vanishing sequence} of $(\mathcal{L},V)$ at $p$ is the 
    increasing sequence 
    of orders of sections in $V$ at the point $p$:
    \begin{equation*}
      0 \leq a_0(p)<a_1(p)<\cdots<a_r(p)\leq d
    \end{equation*}
    and the corresponding \textit{ramification sequence} is the non-increasing sequence
    \begin{equation*}
      d-r\geq \alpha_0(p)=a_r(p)-r\geq \alpha_1(p)=a_{r-1}(p)-(r-1)\geq\cdots\geq 
      \alpha_r(p)=a_0(p)-0\geq 0,
    \end{equation*}
    where $\alpha_i(p)=a_{r-i}(p)-(r-i)$.
  
    If $\alpha_i(p)=0$ for all $0\leq i\leq r$, then $p$ is called 
    a \textit{non-ramification point}; otherwise, 
    it is a \textit{ramification point}.
  \end{definition}
  Some literature defines the ramification sequence as a 
  non-decreasing sequence, i.e., $\alpha_i(p)=a_{i}(p)-i$. 
  Here, to match with Schubert indices later on, 
  we define it as a non-increasing sequence. We say a linear series has 
  ramification at least $\alpha$ at $p$ if its ramification sequence at $p$
  is greater than or equal to $\alpha$ term by term.

The following theorem is a generalization of the Brill-Noether theorem, proved 
by Eisenbud and Harris.
\begin{theorem}[\cite{EH86}, Theorem 4.5]
  Given $g,r,d,n$ and ramification sequences $\alpha^j$ for each $j=1,\cdots,n$, let  
  \begin{equation*}
    \rho=g-(r+1)(g-d+r)-\sum_{j=1}^n|\alpha^j|.
  \end{equation*}
  Then for all smooth projective curves $C$ of genus $g$, and distinct 
  marked points $p_1,\cdots,p_n\in C$, there is a projective moduli space 
  $G^r_d(C,(p_1,\alpha^1),\cdots,(p_n,\alpha^n))$ of $\mathfrak{g}^r_d$'s on $C$ 
  with ramification at least $\alpha^j$ at each $p_j$, and it has every 
  component of dimension at least $\rho$ if it is non-empty. On a general curve $C$
  of genus $g$, the space of $\mathfrak{g}^r_d$'s has dimension exactly $\rho$, and 
  in particular is empty if $\rho<0$.
\end{theorem}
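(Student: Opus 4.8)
The plan is to split the statement into the part valid for \emph{every} smooth curve — the existence of the projective moduli space and the lower bound $\dim\ge\rho$ on every component — and the part that holds only on a \emph{general} curve — the matching upper bound, whence emptiness when $\rho<0$. The first part is a formal determinantal construction; the second requires a degeneration, the theory of limit linear series, and an induction on the genus.

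For the first part I would realize $G^r_d(C,(p_1,\alpha^1),\dots,(p_n,\alpha^n))$ as a determinantal locus. Over $\mathrm{Pic}^d(C)$, after twisting by a fixed effective divisor $D$ of large degree, there is an evaluation map of vector bundles $E_0\to E_1$ with fibers $H^0(\mathcal{L}(D))\to H^0(\mathcal{L}(D)|_{D})$, whose appropriate degeneracy locus is $G^r_d(C)$, of expected codimension $(r+1)(g-d+r)$; the $(r+1)$-plane $V\subset H^0(\mathcal{L})$ then varies in a Grassmann bundle over this, and the condition that $V$ have ramification at least $\alpha^j$ at $p_j$ is a Schubert condition on $V$ relative to the complete flag $H^0(\mathcal{L})\supset H^0(\mathcal{L}(-p_j))\supset H^0(\mathcal{L}(-2p_j))\supset\cdots$, cut out by determinantal equations of expected codimension $|\alpha^j|$. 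Since a nonempty determinantal (or Schubert-degeneracy) locus has every component of codimension at most the expected one, the resulting space, when nonempty, has every component of dimension at least $\dim\mathrm{Pic}^d(C)-(r+1)(g-d+r)-\sum_j|\alpha^j|=\rho$, and it is projective because the construction sits over the projective variety $\mathrm{Pic}^d(C)$. Nothing here uses that $C$ is general.

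For the second part I would degenerate $(C,p_1,\dots,p_n)$ in a one-parameter family to $X_0=C'\cup_{\nu}E$, with $C'$ a general smooth curve of genus $g-1$, $E$ elliptic, and the node $\nu$ together with the $p_j$ (all placed on $C'$) in general position — iterating replaces the curve, over a suitable base, by a chain of $g$ elliptic curves. By the theory of limit linear series, the moduli space of limit $\mathfrak{g}^r_d$'s on $X_0$ with the imposed ramifications is proper and, by semicontinuity in the smoothing, its dimension bounds from above that of $G^r_d(C_t,(p_i,\alpha^i))$ for general $t$. One stratifies this space by the pair of vanishing sequences $(a,a')$ imposed at $\nu$: each stratum is $G^r_d(C',(p_1,\alpha^1),\dots,(p_n,\alpha^n),(\nu,a))\times G^r_d(E,(\nu,a'))$, with the node compatibility $a_k+a'_{r-k}\ge d$, and the additivity of the adjusted Brill-Noether numbers gives $\rho(g-1,r,d,\alpha^1,\dots,\alpha^n,a)+\rho(1,r,d,a')\le\rho(g,r,d,\alpha^1,\dots,\alpha^n)$ for every admissible pair. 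One then inducts on $g$: the theorem in genus $g-1$ bounds $\dim G^r_d(C',\dots)$ on the general curve $C'$ by $\rho(g-1,\dots)$, while the base cases are $g=1$ for $E$ — a transversality statement for the Schubert conditions as the line bundle on $E$ varies — and $g=0$ at the bottom, where the space is a transverse intersection of Schubert varieties for general marked points, by Kleiman's theorem. Hence every stratum has dimension at most its $\rho$-value, so $G^r_d(C_t,(p_i,\alpha^i))$ has dimension at most $\rho$; together with the first part this gives dimension exactly $\rho$ on a general curve and emptiness for $\rho<0$.

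The genuine difficulty lies entirely in the second part, and in two places: arranging the degenerate curve $X_0$ so that $\nu$ and the marked points are general enough on $C'$ for the inductive hypothesis to apply, and the deformation-theoretic core of the Eisenbud-Harris theory — properness of the space of limit linear series together with the fact that every $\mathfrak{g}^r_d$ on the smooth fibers specializes to a limit $\mathfrak{g}^r_d$ on $X_0$ — which is precisely what licenses the semicontinuity step. The determinantal lower bound, the additivity inequality, and the genus $0$ and $1$ base cases are routine by comparison.
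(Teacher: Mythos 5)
The paper does not prove this statement itself --- it is quoted from \cite{EH86} --- and your sketch follows the same two-part strategy as the original: a determinantal/Schubert-degeneracy construction over $\mathrm{Pic}^d(C)$ giving existence, projectivity and the bound $\dim\geq\rho$ on every component for \emph{every} curve, and then a degeneration to a curve with elliptic pieces, the theory of limit linear series, additivity of the adjusted Brill--Noether number over the components, and induction on the genus for the upper bound on a \emph{general} curve. That architecture is correct and is essentially Eisenbud--Harris's. (One small wrinkle in the first part: to get $\dim\geq\rho$ on \emph{every} component you should carry out the construction in one step, inside the Grassmann bundle $G(r+1,E_0)$ over $\mathrm{Pic}^d(C)$, rather than first forming the degeneracy locus $W^r_d$ and then a Grassmann bundle over it, since $h^0(\mathcal{L})$ jumps along that locus; and in the degeneration step the dimension estimate must be applied to all strata with $a_k+a'_{r-k}\geq d$, i.e.\ to crude limit series, because a priori the limit of a family of linear series is only crude.)

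The one genuine gap is your genus-$0$ base case. The flags entering the Schubert conditions on $\mathbb{P}^1$ are not independent general flags: $f^\bullet(q_i)$ is the osculating flag of the rational normal curve at $q_i$, so as the points vary these flags sweep out only a one-parameter family (up to the $3$-dimensional $\mathrm{PGL}_2$-action), and Kleiman's transversality theorem gives nothing once the number of marked points exceeds what the group action can absorb --- the translates of the Schubert varieties that actually occur are highly special. The statement you need, that $\bigcap_i\sigma_{\alpha^{(i)}}(q_i)$ has the expected codimension for \emph{any} distinct points $q_i$, is a genuine theorem of Eisenbud and Harris \cite{EH83}, proved by a further degeneration of the points rather than by general translates; it is precisely the fact quoted in Section~\ref{sec:prelim} of this paper. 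With that substitution (and the crude-limit caveat above) your induction closes, but as written the bottom of the induction rests on a theorem that does not apply.
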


\subsection{Limit linear series}
The original proofs of the Brill-Noether theorem \cite{GH80},
 along with many other results, 
 relied on analyzing linear series on smooth curves 
 and studying their degeneration to singular curves. 
 Later, Eisenbud and Harris \cite{EH86} developed the theory of
  limit linear series, which describes how linear series
   specialize as smooth curves degenerate into curves of 
   compact type. This theory, known for its ability to characterize the
    behavior of linear series under degenerations, 
   has become a powerful tool in algebraic geometry.

  A (possibly) reducible curve $C$ is of \textit{compact type} if
  its Jacobian is compact, or equivalently, if its
   irreducible components are  smooth and meet transversely at a time
    and its dual graph obtained by considering the components
     as vertices and intersection relationships as edges is a tree.

\begin{definition}
Let $C$ be a curve of compact type. For each irreducible component $Y$ of $C$,
 let $L_Y=(\mathcal{L}_Y,V_Y)$ be a $\mathfrak{g}^r_d$ on $Y$. If the collection
\begin{equation*}
L = \{L_Y=(\mathcal{L}_Y,V_Y): Y\text{ is an irreducible component of $C$}\}
\end{equation*}
satisfies, for any two irreducible components $Y$ and $Z$ of $C$ with 
$p=Y\cap Z$, and for any $j=0,\cdots,r$, we have 
\begin{equation*}
a_j^{L_Y}(p)+a^{L_Z}_{r-j}(p)\geq d,
\end{equation*}
then $L$ is called a \textit{crude limit linear series of degree $d$ and rank $r$} on $C$. 
If all these inequalities are equalities, then $L$ is called a
 \textit{refined limit linear series}, or 
 simply a \textit{limit linear series}, on $C$.
\end{definition}
If $L$ is a crude limit linear series but not refined, we call $L$ a \textit{strictly}
crude limit linear series.

Eisenbud and Harris constructed a scheme parameterizing linear series and 
limit linear series for any smoothing family\cite{EH86}, and conclude the following:
\begin{theorem}[\cite{EH86}, Theorem 3.3]
  Let $\mathcal{C}/B, p_1,\cdots,p_n$ be an 
  ($n$-pointed genus-$g$ curve) smoothing family 
  and $\alpha^1,\cdots,\alpha^n$ be ramification sequences. Let 
  \begin{equation*}
    \rho = g-(r+1)(g-d+r)-\sum_{j=1}^n|\alpha^j|.
  \end{equation*}
  Then there is a quasiprojective scheme $G=G^r_d(\mathcal{C}/B,(p_1,
  \alpha^1),\cdots,(p_n,\alpha^n))$ parameterizing linear series on smooth fibers 
  of $\mathcal{C}$, and limit linear series on singular fibers of $\mathcal{C}$, 
  both of degree $d$ and dimension $r$, and having ramification at least 
  $\alpha^j$ at each $p_j$ for $j=1,\cdots,n$. 
  The dimension of any component of 
  $G$ is at least $\rho+\dim(B)$.

  If either 
  \begin{equation*}
    \sum_{j=1}^n|\alpha^j|=(r+1)d+\binom{r+1}{2}(2g-2),
  \end{equation*}
  or no reducible fibers $C_0$ of $\mathcal{C}$ have strictly crude $\mathfrak{g}^r_d$'s 
  with the prescribed ramifications, then $G$ is proper over $B$.
\end{theorem}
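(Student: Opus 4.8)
The plan is to follow the method of Eisenbud and Harris: construct the parameter scheme $G$ first over the locus of smooth fibers by a relative Brill-Noether construction, then extend it across the reducible (compact-type) fibers by means of line bundles on the total space twisted by the components, and finally establish the dimension bound and the valuative criterion of properness. Over the open locus $B^{\circ}\subseteq B$ of smooth fibers the relative Picard scheme $\operatorname{Pic}^{d}(\mathcal{C}^{\circ}/B^{\circ})$ is a smooth proper group scheme (the sections $p_{j}$ furnish a Poincar\'e bundle after an \'etale base change if needed), and after a standard twist by a large multiple of $p_{1}$ one may assume the relevant direct image sheaf on $\operatorname{Pic}^{d}$ is locally free of the expected rank, so that the relative Grassmannian $\mathbb{G}$ of $(r+1)$-planes in its fibers is a smooth projective bundle over $\operatorname{Pic}^{d}$. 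The condition that the ramification at $p_{j}$ be at least $\alpha^{j}$ is imposed as a chain of Schubert/determinantal conditions on the evaluation of sections into the jets along $p_{j}$: for each $i$, evaluation into the $(\alpha^{j}_{i}+i)$-jet is required to drop rank by the prescribed amount. Their common vanishing locus in $\mathbb{G}$ is the desired quasiprojective scheme $G^{\circ}$ over $B^{\circ}$, and counting the expected codimension of these conditions shows that each of its components has dimension at least $\rho+\dim B$.

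To extend across a reducible fiber $C_{0}=Y_{1}\cup\cdots\cup Y_{\gamma}$ of compact type I would work Zariski-locally on $B$ near the corresponding point. For each component $Y_{i}$, twisting a degree-$d$ relative line bundle by a suitable combination of the components $Y_{j}$ of $C_{0}$ (viewed as divisors in $\mathcal{C}$) yields a line bundle $\mathcal{L}_{i}$ of multidegree $0$ on every other component and $d$ on $Y_{i}$; the aspect $L_{Y_{i}}$ is then recorded by an $(r+1)$-dimensional space of sections of $\mathcal{L}_{i}|_{Y_{i}}$, with ramification conditions imposed at the $p_{j}$ on $Y_{i}$. One forms the fiber product over $i$ of the resulting jet-Schubert schemes and cuts out the closed subscheme defined by the node-compatibility inequalities $a^{L_{Y_{i}}}_{j}(Y_{i}\cap Y_{i'})+a^{L_{Y_{i'}}}_{r-j}(Y_{i}\cap Y_{i'})\ge d$ for every $j$ and every node $Y_{i}\cap Y_{i'}$; gluing these local models to $G^{\circ}$ produces $G$. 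For the dimension bound at the boundary the inputs are that each aspect-space is locally cut out of a smooth ambient space by at most the expected number of determinantal equations, and that the adjusted Brill-Noether number is additive over the components of a compact-type curve; together these force every component of $G$ to have dimension at least $\rho+\dim B$.

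For properness I would verify the valuative criterion. Given a discrete valuation ring $R$ with fraction field $K$, a map $\operatorname{Spec}R\to B$, and a $\mathfrak{g}^{r}_{d}$ on the generic fiber satisfying the imposed ramifications, the line bundle extends over $\operatorname{Spec}R$ by properness of the relative Picard scheme, and after a finite base change the $(r+1)$-plane of sections extends to a subbundle whose special value is a limit linear series on the special fiber, the node inequalities being inherited from upper semicontinuity of vanishing orders along the two branches. The delicate point is that a base change forcing the total space to acquire new rational components over the nodes of $C_{0}$ could make the limit live on that modified curve rather than on $C_{0}$ itself, and restricting such a limit back to the $Y_{i}$ yields a strictly crude limit linear series on $C_{0}$. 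The two stated hypotheses each exclude this: if $\sum_{j}|\alpha^{j}|$ already equals the Pl\"ucker total ramification $(r+1)d+\binom{r+1}{2}(2g-2)$ there is no ramification left to leak onto the new components, and in the other case no reducible fiber carries a strictly crude $\mathfrak{g}^{r}_{d}$ with the prescribed ramification at all, so every limiting object genuinely lies in $G$.

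The step I expect to be the main obstacle is the boundary construction: making the aspect-by-aspect picture with its component-twists coherent as the base moves, gluing it to $G^{\circ}$, and --- both for the dimension bound and for the valuative criterion --- proving that a one-parameter family of honest linear series on smooth fibers really specializes to a compatible collection of aspects, i.e. controlling the interaction of the subbundle of sections with the reshuffling of the relative line bundle across the nodes. The pure dimension count and the extension of $\mathcal{L}$ and $V$ over a trait, though technical, are comparatively routine once this scheme structure is in place.
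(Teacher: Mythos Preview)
The paper does not prove this statement at all: it is quoted in the Preliminaries as Theorem~3.3 of \cite{EH86} and used as a black box, with no argument supplied. So there is nothing to compare your proposal against in this paper; you are effectively sketching the original Eisenbud--Harris construction rather than anything the present author does.

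That said, your outline is a reasonable summary of the Eisenbud--Harris strategy. The one place I would flag as genuinely incomplete is the gluing of the boundary model to $G^{\circ}$: you describe the aspect-by-aspect scheme over a single reducible fiber and assert it can be glued, but the actual content of the Eisenbud--Harris construction is that the twisting by components can be done \emph{coherently in families} near the boundary, so that over an \'etale neighborhood of a boundary point one has a single scheme restricting to $G^{\circ}$ on the smooth locus and to the product-of-aspects picture on the special fiber. Your proposal acknowledges this as ``the main obstacle'' but does not indicate how to carry it out; without it, you have two separately defined schemes and no mechanism to identify them on overlaps, so neither the dimension bound (which needs $G$ to be locally cut out of something smooth by a controlled number of equations) nor the valuative criterion (which needs the limit to land in the \emph{same} scheme $G$) actually goes through. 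If you want to supply a proof rather than cite one, this is the step that needs real work.
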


\subsection{Linear series on $\mathbb{P}^1$ and Schubert cycles}
 It is well known that
 linear series of
degree $d$ and rank $r$ on $Y=\mathbb{P}^1$  correspond to $(r+1)$-dimensional
 subspaces of $\mathrm{H}^0(Y,\mathcal{O}_Y(d))$. 
 Let $\mathrm{Gr}(r,d)$ be the Grassmannian parameterizing all 
 $(r+1)$-dimensional subspaces of 
 $\mathrm{H}^0(Y,\mathcal{O}_Y(d))$, or equivalently,
  all $r$-dimensional subspaces of $\mathbb{P}^d$.

  Ramification sequences of linear series on $Y=\mathbb{P}^1$
are closely connected to Schubert indices.
For any point $q\in Y$,  we define subspaces $f^i(q)$ 
  \begin{equation*}
  f^i(q)=\{\sigma\in\mathrm{H}^0(Y,\mathcal{O}(d)): 
  \mathrm{ord}q(\sigma)\geq d-i\}.
  \end{equation*}
  These subspaces
  form a complete flag in $\mathrm{H}^0(Y,\mathcal{O}(d))$. For a
   Schubert 
  index $\alpha=(\alpha_0,\cdots,\alpha_r)$ satisfying
  \begin{equation*}
  d-r\geq\alpha_0\geq\cdots\geq\alpha_r\geq 0,
  \end{equation*}
  we define the associated Schubert cycle in the Grassmannian as
  \begin{equation*}
  \sigma_{\alpha}(q)=\{V\in \mathrm{Gr}(r,d): \mathrm{dim}(V\cap f^{(d-r+i-\alpha_i)}(q))>i\},
  \end{equation*}
  which is a subvariety of codimension $|\alpha|=\sum_i\alpha_i$. 
  The relationship between ramification sequences and Schubert varieties
   can be expressed as follows:
  \begin{proposition}
    If $(\mathcal{O}_Y(d),V)$ is a linear series on $Y=\mathbb{P}^1$, 
    then its ramification sequence at point $q$ is greater
     than or equal to $(\alpha_0,\cdots,\alpha_r)$ term by term
      if and only if $V\in \sigma_{\alpha_0,\cdots,\alpha_r}(q)$.
    \end{proposition}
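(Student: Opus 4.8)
The plan is to reduce the proposition to an elementary dimension count on $\mathrm{H}^0(Y,\mathcal{O}(d))$ and then unwind the index shifts built into the definition of $\sigma_\alpha(q)$.

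First I would record the structure of the flag $f^\bullet(q)$. Working in an affine chart with $q$ at the origin, a section of $\mathcal{O}_Y(d)$ is a polynomial of degree at most $d$, and $f^i(q)$ is the subspace of those divisible by $x^{d-i}$; hence $\dim f^i(q)=i+1$ and $f^0(q)\subset f^1(q)\subset\cdots\subset f^d(q)=\mathrm{H}^0(Y,\mathcal{O}(d))$ is indeed a complete flag. I would also note in passing that $0\le d-r+i-\alpha_i\le d$ for all $0\le i\le r$, so every index occurring in the definition of $\sigma_\alpha(q)$ is legitimate.

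Next, let $0\le a_0<a_1<\cdots<a_r\le d$ be the vanishing sequence of $(\mathcal{O}_Y(d),V)$ at $q$. The heart of the argument is the identity
\[
\dim\bigl(V\cap f^{i}(q)\bigr)=\#\{\,j:\ a_j\ge d-i\,\},
\]
valid for every $i$. This is the standard fact that, for any subspace of sections and any integer threshold $t$, the dimension of the subspace of those vanishing to order at least $t$ at $q$ equals the number of vanishing orders of the full subspace that are $\ge t$: choose one section of $V$ realizing each vanishing order $a_j$; those with $a_j\ge t$ are linearly independent (distinct vanishing orders) and span $V\cap\{\mathrm{ord}_q\ge t\}$. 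Rewriting, since $a_{r-i}$ is the $(i+1)$-st largest element of the vanishing sequence, we get that $\dim\bigl(V\cap f^{m}(q)\bigr)>i$ if and only if $a_{r-i}\ge d-m$.

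Finally I would substitute $m=d-r+i-\alpha_i$ into this equivalence. The defining condition $\dim\bigl(V\cap f^{\,d-r+i-\alpha_i}(q)\bigr)>i$ becomes $a_{r-i}\ge d-(d-r+i-\alpha_i)=(r-i)+\alpha_i$, that is,
\[
\alpha_i(q)=a_{r-i}(q)-(r-i)\ \ge\ \alpha_i .
\]
Imposing this for all $i=0,\dots,r$ shows that $V\in\sigma_{\alpha_0,\dots,\alpha_r}(q)$ holds exactly when the ramification sequence of $(\mathcal{O}_Y(d),V)$ at $q$ dominates $(\alpha_0,\dots,\alpha_r)$ term by term, which is the claim. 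There is no serious obstacle here; the only thing requiring care is the bookkeeping---keeping the nonincreasing convention for $\alpha$ in sync with the direction in which the flag $f^\bullet(q)$ grows---and as a sanity check one can verify that the same computation recovers the known codimension $\sum_i\alpha_i$ of $\sigma_\alpha(q)$.
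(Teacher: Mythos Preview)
Your proof is correct and follows essentially the same route as the paper: both arguments unwind the Schubert condition $\dim(V\cap f^{d-r+i-\alpha_i}(q))>i$ via the observation that $\dim\{\sigma\in V:\mathrm{ord}_q(\sigma)\ge t\}$ counts the vanishing orders of $V$ that are $\ge t$, and then translate this into the termwise inequality $a_{r-i}\ge (r-i)+\alpha_i$, i.e.\ $\alpha_i(q)\ge\alpha_i$. Your write-up is slightly more expansive (you spell out the flag structure and the counting argument explicitly), but there is no substantive difference in method.
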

    \begin{proof}
    The proof is straightforward. 
    Suppose the vanishing sequence of $(\mathcal{O}_Y(d),V)$ at $q$  is 
    $(b_0,\cdots,b_r)$ and the corresponding ramification sequence is
    $(\beta_0,\cdots,\beta_r)$. On the other hand, let $(a_0,\cdots,a_r)$
     be the corresponding vanishing sequence of $(\alpha_0,\cdots,\alpha_r)$. 
     Then we have 
    \begin{equation*}
    \begin{aligned}
    V\in \sigma_{\alpha_0,\cdots,\alpha_r}(q)
    &\Leftrightarrow \text{ for any }i,\:
     \mathrm{dim}(V\cap f^{d-r+i-(a_{r-i}-(r-i))})>i,\\
    &\Leftrightarrow \text{ for any }i,\: 
    \mathrm{dim}(V\cap f^{d-a_{r-i}})>i,\\
    &\Leftrightarrow \text{ for any }i,\: \mathrm{dim}\{\sigma\in V: \mathrm{ord}_q(\sigma)\geq
     a_{r-i}
    \}\geq i+1,\\
    &\Leftrightarrow \text{ for any }i,\: \mathrm{codim}\{\sigma\in V: \mathrm{ord}_q(\sigma)\geq a_{r-i}
    \}\leq r-i,\\
    &\Leftrightarrow \text{ for any }i,\: b_{r-i}\geq a_{r-i},\\
    &\Leftrightarrow \text{ for any }i,\: \beta_i\geq \alpha_i.\\
    \end{aligned}
    \end{equation*}
    \end{proof}

    Let $q_1,\cdots,q_k$ denote distinct points of $\mathbb{P}^1$ and
     $\alpha^{(1)},\cdots,
    \alpha^{(k)}$  represent ramification sequences associated with these points. 
    An important fact that we will frequently use, which can be found in \cite{EH83}, 
    is that 
    the intersection $\bigcap_{i=1}^k\sigma_{\alpha^{(i)}}(q_i)$ has the expected
    codimension $\sum_{i=1}^k |\alpha^{(i)}|$.

    \vspace{\baselineskip}

    \section{Limit linear series with two imposed ramifications}

     As in \cite{EH87}, we prove our results by
     degenerating smooth curves into reducible nodal curves, leveraging
      the combinatorial representations of limit linear series on
       such curves. A commonly used degeneration method involves chains 
       of elliptic and rational curves.
    Specifically, we examine a twice-marked reducible nodal curve $C_\infty$, 
    which is same as the one discussed in \cite{EH87}, 
    as illustrated in Figure \ref{cinfty}.
    
    \begin{figure}[H]
    \centering
    \includegraphics[width=0.4\textwidth]{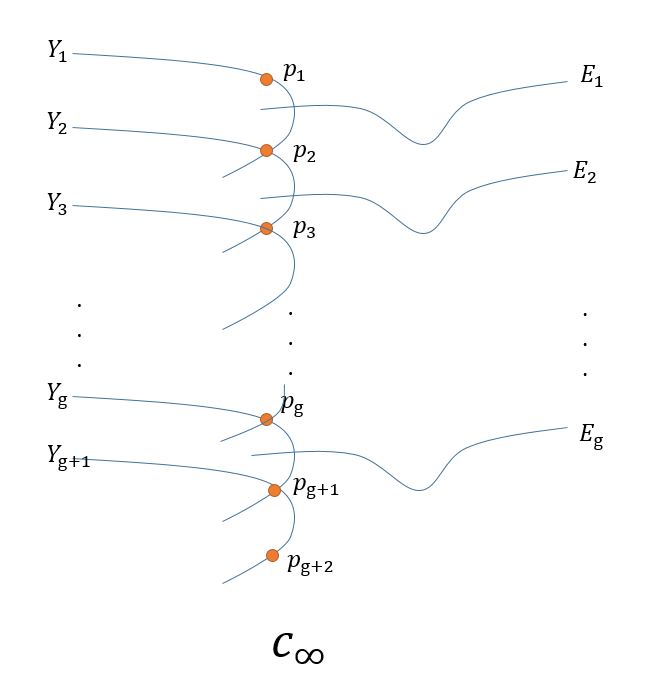}
    \caption{Curve $C_\infty$}
    \label{cinfty}
    \end{figure}
    
    Here, $Y_1,\cdots,Y_{g+1}$ are rational curves isomorphic to $\mathbb{P}^1$, 
    $E_1,\cdots,E_g$ are elliptic curves attached
      to $Y_1,\cdots,Y_g$ respectively, and $p_1=p$, $p_{g+2}=q$ are the
      marked points. To ensure the stability of $C_\infty$, we may label 
      an additional point on $Y_{g+1}$. Although for simplicity of notation, 
      we temporarily omit it. Later, we will 
      provide a detailed description of all limit linear series
       on $C_\infty$ with ramification sequences at least $\alpha$ at $p_1$ and 
       at least $\beta$ at $p_{g+2}$.
    
    To investigate the monodromy action on
     the limit linear series of the curve $C_\infty$, 
     we can examine  some one-parameter families where each curve also takes
      the form of chains of elliptic and rational curves. 
    For any $1\leq i\leq g$, we consider the one-parameter family
     $C_{i,p}$, where as $p$ approaches infinity, 
     the stable limit of $C_{i,p}$ is $C_\infty$. 
     The curve $C_{i,p}$ is depicted in the Figure \ref{cip}.
    
    \begin{figure}[H]
    \centering
    \includegraphics[width=0.4\textwidth]{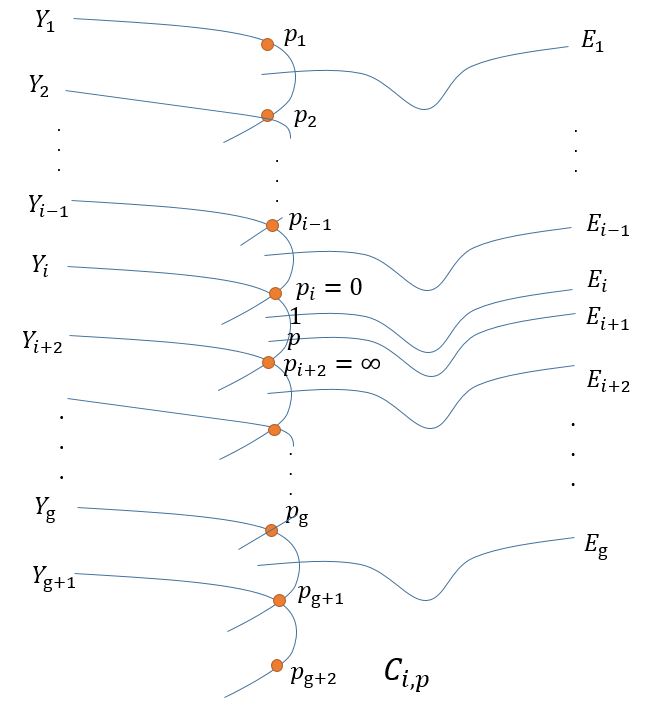}
    \caption{Curve $C_{i,p}$}
    \label{cip}
    \end{figure}
    
    The curve $C_{i,p}$ differs from $C_\infty$ 
    in that it excludes the rational curve  $Y_{i+1}$
    and instead attaches the elliptic curve $E_{i+1}$ to $Y_i$. 
    In terms of coordinates on $Y_i$, 
    the intersection points of  $Y_{i-1},E_i,E_{i+1}$ and $Y_{i+1}$ with
     $Y_i$ are 
 $0,1,p,\infty,$ respectively, where $p\neq 0,1,\infty$.
  As $p$ tends to infinity, the stable limit of the curves $C_{i,p}$ 
  results in the blow-up of the curve at $p=\infty$, 
  precisely adding another rational curve  $\mathbb{P}^1$, which is 
     exactly the curve $C_\infty$.
    
    \subsection{Combinatorial representation of limit linear series on $C_\infty$ 
    and  $C_{i,p}$}

    Next, we study the combinatorial representation
    of limit linear series with ramification conditions at the given two points
     on 
    the curves $C_{i,p}$ and $C_\infty$. We will utilize
     some results from \cite{EH87}, and for convenience, we include relevant propositions here.
    The curves $C_{i,p}$ and $C_\infty$  primarily have only two distinct local types, 
    denoted as $D$ and $D'$, as shown in 
    Figure \ref{curveD}.
    
    \begin{figure}[H]
    \centering
    \includegraphics[width=0.4\textwidth]{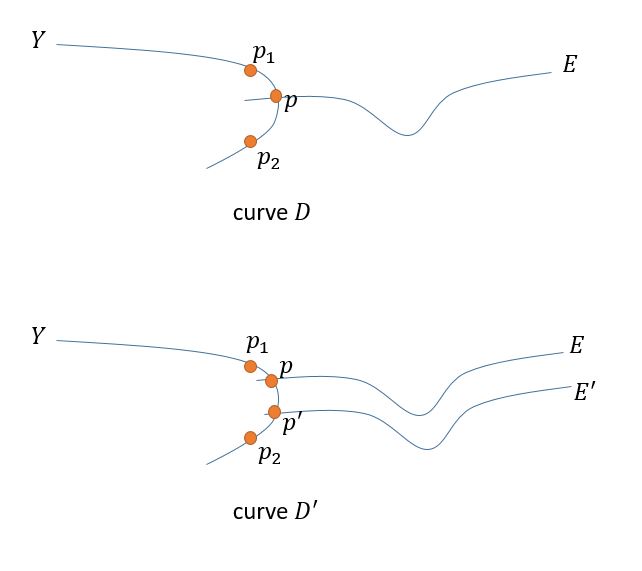}
    \caption{Curves $D$ and $D'$}
    \label{curveD}
    \end{figure}
    
    Let $L$ be a limit $\mathfrak{g}^r_d$ on the curve $D$. Then, $L$ consists of 
      linear series on the two irreducible 
    components of $D$: $L_Y=(\mathcal{O}_Y(d),V_Y)$ 
    and $L_E=(\mathcal{L}_E,V_E)$. 
    We denote the vanishing sequence of $L_Y$ at $p_1$ as $a^{(1)}$
    and at $p_2$ as $b^{(1)}$,  corresponding to ramification
     sequences $\alpha^{(1)}$ and $\beta^{(1)}$ respectively. 
     Furthermore, let
     $\alpha^{(2)}=(\beta^{(1)})^\vee$ denote the dual ramification sequence of $\beta^{(1)}$, where for any $0\leq i\leq r$,
    \begin{equation*}
    \alpha^{(2)}_i=d-r-\beta^{(1)}_{r-i}.
    \end{equation*}
    For the curve $D'$, we use the same notation.  Then the limit linear 
    series on $D$ and $D'$ can be characterized as follows:

    \begin{lemma}[\cite{EH87}, Corollary 1.2]\label{curvegrd}
        We have:
        \begin{enumerate}
        \item On the curve $D$, it holds that $|\alpha^{(2)}|\geq |\alpha^{(1)}|+r$. If equality holds,
        then
        $\mathcal{L}_E=\mathcal{O}_E(dp)$, $V_E$ is the image of
         $\mathrm{H}^0(\mathcal{L}_E(-(r+1)p))$
        in $\mathrm{H}^0(\mathcal{L}_E)$, and there exists a unique $i$ such that
        \begin{equation*}
        b_{r-i}^{(1)}=d-a_i^{(1)},
        \end{equation*}
        and for any $j\neq i$,
        \begin{equation*}
        b_{r-j}^{(1)}=d-a_j^{(1)}-1.
        \end{equation*}
        Conversely, given vanishing sequences $a^{(1)}$ and $b^{(1)}$ satisfying
         these conditions, there exists a unique limit
          $\mathfrak{g}^r_d$ on $D$ such that the vanishing sequences 
          at $p_1$ and $p_2$ are respectively $a^{(1)}$ and $b^{(1)}$.
        
        \item On the curve $D'$, it holds that $|\alpha^{(2)}|\geq |\alpha^{(1)}|+2r$. If equality holds,
        then the linear series $(\mathcal{L}_E,V_E)$ and
         $(\mathcal{L}_{E'},V_{E'})$ of the elliptic curve parts 
          are determined in the same way as $D$.
        
        Given vanishing sequences $a^{(1)}$ and $b^{(1)}$ such that 
        the corresponding ramification sequences satisfy 
        $|\alpha^{(2)}|= |\alpha^{(1)}|+2r$,  there is at most one limit $\mathfrak{g}^r_d$
       on $D'$ with these sequences except in the following case, 
       where there are either $1$ or $2$ such series: 
      there exist integers $i<j$ such that
        \begin{enumerate}
          \item If $i>0$, then $a^{(1)}_{i-1}<a_i^{(1)}-1$;
          \item $a^{(1)}_{j-1}<a^{(1)}_j-1$;
          \item $b^{(1)}_{r-i}=d-a^{(1)}_i-1$;
          \item $b^{(1)}_{r-j}=d-a^{(1)}_j-1$;
          \item For any $k\neq i,j$, $b^{(1)}_{r-k}=d-a^{(1)}_k-2$.
        \end{enumerate}
        \end{enumerate}
        \end{lemma}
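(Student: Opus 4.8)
The plan is to isolate two inputs: an elementary bound on the ramification of a $\mathfrak g^r_d$ at a single point of an elliptic curve, and the Schubert-calculus fact recalled above that an intersection of Schubert cycles attached to distinct points of $\mathbb P^1$ has the expected codimension. Write $\nu$ (for $D$), resp.\ $\nu,\nu'$ (for $D'$), for the node(s) where the elliptic tail(s) meet $Y\cong\mathbb P^1$, and $c^Y,c^E$ (resp.\ also $c^{E'}$) for the vanishing sequences of $L_Y,L_E$ at $\nu$, with ramification sequences $\gamma^Y,\gamma^E$. The first step is purely formal: rewriting the crude limit-series inequality $a^{L_Y}_j(\nu)+a^{L_E}_{r-j}(\nu)\ge d$ in terms of ramification sequences gives $\gamma^Y_i+\gamma^E_{r-i}\ge d-r$ for all $i$, hence $|\gamma^Y|+|\gamma^E|\ge(r+1)(d-r)$, with equality exactly when $L$ is refined at $\nu$ (similarly at $\nu'$).

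Next I would prove the elliptic bound. For a $\mathfrak g^r_d$ $(\mathcal L_E,V_E)$ on $E$ with vanishing sequence $c$ at $p$, the sections of $V_E$ vanishing to order $\ge c_i$ at $p$ form a subspace of dimension $r+1-i$, so $h^0(\mathcal L_E(-c_ip))\ge r+1-i$; since a line bundle of degree $\delta$ on a genus-one curve has $h^0=\delta$ for $\delta\ge1$, $h^0\le1$ for $\delta=0$, and $h^0=0$ for $\delta<0$, this forces $c_i\le d-r-1+i$ for $i<r$ and $c_r\le d$, with $c_r=d$ only if $\mathcal L_E\cong\mathcal O_E(dp)$. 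Summing over $i$, $|\gamma^E|\le r(d-r-1)+(d-r)=(r+1)(d-r)-r$; in case of equality $c^E=(d-r-1,d-r,\dots,d-2,d)$, so $\mathcal L_E\cong\mathcal O_E(dp)$ and $V_E$ is the unique $(r+1)$-dimensional subspace of $H^0(\mathcal O_E(dp))$ of sections vanishing to order $\ge d-r-1$ at $p$. Combining with the first step yields $|\gamma^Y|\ge r$ on $D$, and $|\gamma^Y|\ge r$ at each of $\nu,\nu'$ on $D'$. Since $L_Y$ is a point of $\sigma_{\alpha^{(1)}}(p_1)\cap\sigma_{\beta^{(1)}}(p_2)\cap\sigma_{\gamma^Y}(\nu)$ (and, for $D'$, also of $\sigma_{\gamma^Y}(\nu')$), nonemptiness forces, via the cited fact, that the sum of the codimensions of the factors is $\le\dim\mathrm{Gr}(r,d)=(r+1)(d-r)$; rewriting $(r+1)(d-r)=|\alpha^{(2)}|+|\beta^{(1)}|$ gives $|\alpha^{(2)}|\ge|\alpha^{(1)}|+r$ for $D$ and $|\alpha^{(2)}|\ge|\alpha^{(1)}|+2r$ for $D'$.

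Then I would analyze equality. Equality makes all of the above estimates sharp at once: each elliptic aspect is the determined $(\mathcal O_E(dp),V_E)$, every node is refined, and dualizing the now-known $c^E$ through $\gamma^Y_i+\gamma^E_{r-i}=d-r$ forces $\gamma^Y=(1,\dots,1,0)$ (with $r$ ones), i.e.\ $c^Y=(0,2,3,\dots,r+1)$, at each node. Substituting $c^Y$ into the refined node relations $a^{L_Y}_j(\nu)+a^{L_E}_{r-j}(\nu)=d$ and translating back to the boundary vanishing sequences $a^{(1)},b^{(1)}$ gives, by a direct computation, the stated shapes: for $D$, a unique $i$ with $b^{(1)}_{r-i}=d-a^{(1)}_i$ and $b^{(1)}_{r-j}=d-a^{(1)}_j-1$ otherwise; for $D'$, the relations (c)--(e). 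Conversely, given $a^{(1)},b^{(1)}$ of the prescribed shape, the elliptic aspects are forced and unique, so a limit $\mathfrak g^r_d$ with these boundary sequences is the same data as a point of the $0$-dimensional Schubert locus $\sigma_{\alpha^{(1)}}(p_1)\cap\sigma_{\beta^{(1)}}(p_2)\cap\sigma_{(1,\dots,1,0)}(\nu)$ (resp.\ also $\cap\,\sigma_{(1,\dots,1,0)}(\nu')$): in this codimension any point of the locus automatically has ramification exactly $(1,\dots,1,0)$ at the node(s) -- otherwise it would lie in a proper Schubert subcycle, forcing a nonempty intersection of codimension $>\dim\mathrm{Gr}(r,d)$ and contradicting the cited fact -- hence the correct vanishing, so limit series correspond bijectively (as sets) to points of the locus. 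By the cited transversality statement this locus is nonempty of dimension $0$, and what remains is to pin down its length: $1$ for $D$ (a single reduced point), and $\le 2$ for $D'$.

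The real obstacle is this last length computation. As Schubert varieties are Cohen--Macaulay and (by the cited fact) the intersection is dimensionally proper, the length equals the intersection number $\langle\sigma_{\alpha^{(1)}}\cdot\sigma_{\beta^{(1)}}\cdot\sigma_{(1,\dots,1,0)}^{\,2},[\mathrm{Gr}(r,d)]\rangle$; the Pieri rule gives $\sigma_{(1,\dots,1,0)}^{\,2}=\sigma_{(2,\dots,2,0)}+\sigma_{(2,\dots,2,1,1)}$, so this number is a sum of two Littlewood--Richardson coefficients, each $0$ or $1$ -- equivalently, the number of partitions $\nu$ with $\alpha^{(1)}\subseteq\nu\subseteq(\beta^{(1)})^{\vee}$ such that $\nu/\alpha^{(1)}$ and $(\beta^{(1)})^{\vee}/\nu$ are both vertical $r$-strips. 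Showing this count is at most $2$, and translating the case of count $2$ into the combinatorial conditions (a)--(e) on $a^{(1)},b^{(1)}$ -- together with a local inspection deciding whether the length-$2$ scheme is two reduced points (two series) or one double point (one series), which accounts for the ``$1$ or $2$'' in the statement -- is where essentially all of the work lies; the elliptic bound and the codimension estimate by comparison are routine. The analogous count for $D$ is the number of such $\nu$ with a single vertical-$r$-strip step from $\alpha^{(1)}$ to $(\beta^{(1)})^{\vee}$, which is $0$ or $1$, the value $1$ being exactly the $D$-shape of $a^{(1)},b^{(1)}$ described above.
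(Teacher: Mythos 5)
First, a point of comparison: the paper does not prove Lemma \ref{curvegrd} at all -- it is quoted from \cite{EH87} (Corollary 1.2) -- so your proposal can only be judged on its own terms. Its first two paragraphs are correct and would reprove the inequalities: the elliptic estimate $c_i\le d-r-1+i$ for $i<r$, $c_r\le d$ with $\mathcal L_E\cong\mathcal O_E(dp)$ at equality (and indeed what you prove, sections vanishing to order at least $d-r-1$ at $p$, is the intended description; the paper's transcription ``image of $\mathrm H^0(\mathcal L_E(-(r+1)p))$'' should read $\mathcal L_E(-(d-r-1)p)$), combined with the node inequalities and the dimensional-transversality fact from \cite{EH83}, do give $|\alpha^{(2)}|\ge|\alpha^{(1)}|+r$ on $D$ and $|\alpha^{(2)}|\ge|\alpha^{(1)}|+2r$ on $D'$, and your equality analysis correctly forces the elliptic aspects and ramification exactly $(1,\dots,1,0)$ of $L_Y$ at each node.

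The gap is that the part of the lemma that is actually used later -- the precise numerical shapes of $(a^{(1)},b^{(1)})$ and the count of series -- is exactly what you leave undone, by your own admission (``where essentially all of the work lies''). Two specific problems. (i) The claim that the $D$-shape (a unique $i$ with $b^{(1)}_{r-i}=d-a^{(1)}_i$ and $b^{(1)}_{r-j}=d-a^{(1)}_j-1$ otherwise) follows ``by a direct computation'' from substituting $c^Y$ into the refined node relations is not right: those relations couple $L_Y$ and $L_E$ at the node only and say nothing about $a^{(1)}$ versus $b^{(1)}$; the shape comes from nonemptiness of the three-point intersection $\sigma_{\alpha^{(1)}}(p_1)\cap\sigma_{\beta^{(1)}}(p_2)\cap\sigma_{(1,\dots,1,0)}(\nu)$ of total codimension $(r+1)(d-r)$, i.e.\ from positivity of a Pieri coefficient -- the very computation you only set up in your last paragraph. (ii) For $D'$, the aside that the two Littlewood--Richardson coefficients are ``each $0$ or $1$'' is unjustified (and false for general shapes, e.g.\ $c^{(3,2,2,1)}_{(2,1,1),(2,1,1)}=2$); what you actually need is the chain count, whose bound has a short argument you do not give: each row of $(\beta^{(1)})^\vee/\alpha^{(1)}$ gains $0$, $1$ or $2$ boxes (note the gain at index $k$ is $d-a^{(1)}_k-b^{(1)}_{r-k}$) and the total gain is $2r$ over $r+1$ rows, so either one row gains $0$ (at most one chain) or exactly two rows gain $1$ (at most two chains), which is precisely conditions (c)--(e), with (a)--(b) entering when one checks that both intermediate shapes are genuine partitions. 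Finally, no extra ``local inspection'' is needed for the ``$1$ or $2$'' assertion: length at most $2$, nonemptiness (positive intersection number) and your exact-ramification argument already give it; deciding which of $1$ or $2$ occurs is the content of the separate Lemma \ref{branch} and is not claimed here. With these computations supplied your plan does go through, but as written the essential steps are missing.
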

        \vspace{\baselineskip}

    Using this lemma, We can fully characterize limit linear series
     on $C_{i,p}$ and $C_\infty$ with the imposed ramification conditions
      $\alpha$ and $\beta$ at $p_1$ and $p_{g+2}$ respectively. 
      Let $L$ be such a limit $\mathfrak{g}_d^r$ on $C_\infty$, then the
       restriction of $L$  on each part  
   remains a limit
       $\mathfrak{g}_d^r$. 
       Suppose   the vanishing
        sequence  and the ramification sequence of $(\mathcal{L}_{Y_i},V_{Y_i})$ at $p_i$ are
         $a^{(i)}=(a^{(i)}_0,\cdots,a_r^{(i)})$ and  $\alpha^{(i)}=(a_r^{(i)}-r,\cdots,a_0^{(i)}-0)$, respectively.
    According to Lemma \ref{curvegrd}, for any $1\leq i\leq g$, we have
    \begin{equation*}
    |\alpha^{(i+1)}|\geq |\alpha^{(i)}|+r,
    \end{equation*}
    which leads to
    \begin{equation}
    |\alpha^{(g+1)}|\geq |\alpha^{(1)}|+rg\geq |\alpha|+rg.
    \label{geqram2}
    \end{equation}
    On the other hand, 
    to ensure the intersection
     $\sigma_{\alpha^{(g+1)}}\cdot\sigma_{\beta}$ is nonempty, 
     it is required that
    \begin{equation*}
    |\alpha^{(g+1)}|+|\beta|\leq\mathrm{dim}\, G(r,d)=(r+1)(d-r).
    \end{equation*}
    While at the beginning we assume 
    \begin{equation*}
    \rho(g,r,d,\alpha,\beta)=g-(r+1)(g-d+r)-|\alpha|-|\beta|=0,
    \end{equation*}
    so that  
    \begin{equation}
    \begin{aligned}
    |\alpha^{(g+1)}|+|\beta| &\leq (r+1)(d-r)\\
    &= rg + |\alpha| + |\beta|.\\
    \end{aligned}
    \label{leqdim}
    \end{equation}
    By combining inequalities \eqref{geqram2} and \eqref{leqdim}, we deduce
     that all the 
    equalities must hold. Thus, $\alpha^{(1)}=\alpha$ and 
    for any $1\leq i\leq g$, $|\alpha^{(i+1)}|=|\alpha^{(i)}|+r$.

    For the curve $C_{i,p}$, we use the same notation. 
    For any $1\leq j\leq g$ and $j\neq i,i+1$, we have
    \begin{equation*}
    |\alpha^{(j+1)}|\geq |\alpha^{(j)}|+r,
    \end{equation*}
    and 
    \begin{equation*}
    |\alpha^{(i+2)}|\geq |\alpha^{(i)}|+2r.
    \end{equation*}
    Consequently,
    \begin{equation*}
    |\alpha^{(g+1)}|\geq |\alpha^{(1)}|+(r-2)g+2g\geq |\alpha|+rg.
    \label{geqram}
    \end{equation*}
    Similarly, in this case, the inequality
     \eqref{leqdim} also holds, 
     confirming that these inequalities are all equalities. 
    Therefore, $\alpha^{(1)}=\alpha$ and for any 
    $1\leq j\leq g$ where $j\neq i,i+1$,
     it must be that  $|\alpha^{(j+1)}|=|\alpha^{(j)}|+r$ and 
     $|\alpha^{(i+2)}|=|\alpha^{(i)}|+2r$.

    In summary, a limit linear series satisfying ramification at least $\alpha$ 
    at $p_1$ and at least $\beta$ at $p_2$
     $L$ on the curve $C=C_\infty$ corresponds to a chain of Schubert cycles:
    \begin{equation}
    \Delta(L)=(\sigma_{\alpha^{(1)}}=
    \sigma_\alpha,\: \cdots\: ,\sigma_{\alpha^{(g+1)}}=\sigma_{\beta}^{\vee})
    \label{schubertchain}
    \end{equation}
    where $\sigma_{\beta}^{\vee}$ is the Poincaré dual of $\sigma_{\beta}$.
    This correspondence is indeed one-to-one, which we formulate  as the 
    following theorem:
    
    \begin{theorem}\label{chain}
    Suppose $\alpha,\beta$ are two ramification sequences and
    \begin{equation*}
    \rho(g,r,d,\alpha,\beta)=g-(r+1)(g-d+r)-|\alpha|-|\beta|=0.
    \end{equation*}
    Let $\Sigma$ be the set of sequences of $g+1$ Schubert cycles in $\mathrm{Gr}(r,d)$:
    \begin{equation*}
      s_1=\sigma_\alpha,\cdots,s_{g+1}=\sigma_\beta^{\vee}
      \end{equation*}
      such that for any $1\leq i <g+1$:
      \begin{equation*}
      s_i\cdot \sigma_{1,\cdots,1,0}\cdot s_{i+1}^{\vee}\neq 0.
    \end{equation*}
    Then, the limit linear series $\mathfrak{g}^r_d$ on
     the curve $C_\infty$ satisfying the ramification sequences 
     at $p_1$, $p_{g+2}$ being at least $\alpha$, $\beta$ respectively, 
     correspond bijectively to $\Sigma$, where 
     the limit linear series $L$ corresponds 
     to $\Delta(L)$ as given in Equation \eqref{schubertchain}.
    \end{theorem}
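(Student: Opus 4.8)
The plan is to check that $\Delta$ takes values in $\Sigma$ and then to construct an explicit inverse. For the first part, let $L$ be a limit $\mathfrak{g}^r_d$ on $C_\infty$ with ramification at least $\alpha$ at $p_1$ and at least $\beta$ at $p_{g+2}$, and let $\alpha^{(1)},\dots,\alpha^{(g+1)}$ be the ramification sequences of its aspects $L_{Y_1},\dots,L_{Y_{g+1}}$ at $p_1,\dots,p_{g+1}$. The dimension count carried out in the discussion preceding the theorem forces $\alpha^{(1)}=\alpha$ and $|\alpha^{(i+1)}|=|\alpha^{(i)}|+r$ for all $1\le i\le g$; in particular equality in \eqref{leqdim} gives $|\alpha^{(g+1)}|+|\beta|=(r+1)(d-r)=\dim\mathrm{Gr}(r,d)$. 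Since $L_{Y_{g+1}}$ lies in $\sigma_{\alpha^{(g+1)}}(p_{g+1})\cap\sigma_\beta(p_{g+2})$, which has the expected dimension (here zero, as the two codimensions are complementary), the two cycles are Poincaré dual to one another, so $\alpha^{(g+1)}=\beta^\vee$. Hence $s_1=\sigma_\alpha$ and $s_{g+1}=\sigma_\beta^\vee$, and it only remains to verify that $s_i\cdot\sigma_{1,\cdots,1,0}\cdot s_{i+1}^\vee\neq 0$ for $1\le i\le g$.

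To establish this I would recognize the triple product as a Pieri structure constant. Since $|(1,\cdots,1,0)|=r$ and $\operatorname{codim} s_{i+1}^\vee=\dim\mathrm{Gr}(r,d)-|\alpha^{(i+1)}|$, the relation $|\alpha^{(i+1)}|=|\alpha^{(i)}|+r$ makes the total codimension equal to $\dim\mathrm{Gr}(r,d)$, so the product $s_i\cdot\sigma_{1,\cdots,1,0}\cdot s_{i+1}^\vee$ is the integer equal to the coefficient of $\sigma_{\alpha^{(i+1)}}$ in $\sigma_{\alpha^{(i)}}\cdot\sigma_{1,\cdots,1,0}$. Because $(1,\cdots,1,0)$ is a single column of $r$ boxes, the dual Pieri rule makes this coefficient equal to $1$ exactly when $\alpha^{(i+1)}$ is obtained from $\alpha^{(i)}$ by adding a vertical strip of size $r$, i.e.\ by adding $1$ to $r$ of the $r+1$ entries while keeping the sequence non-increasing, and equal to $0$ otherwise. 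Passing to vanishing sequences — and using that, by node-compatibility, the vanishing sequence of $L_{Y_i}$ at $p_{i+1}$ is the sequence complementary to $a^{(i+1)}$ — this vertical-strip condition becomes exactly the conclusion of Lemma~\ref{curvegrd}(1): there is a unique index at which $b^{(1)}_{r-j}=d-a^{(1)}_j$ rather than $d-a^{(1)}_j-1$. Since the restriction of $L$ to the subcurve $Y_i\cup E_i$ (of the local type $D$) is a limit $\mathfrak{g}^r_d$ realizing $|\alpha^{(i+1)}|=|\alpha^{(i)}|+r$, Lemma~\ref{curvegrd}(1) supplies precisely this structure, so each triple product equals $1$ and $\Delta(L)\in\Sigma$.

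For the inverse, start from $(s_1,\dots,s_{g+1})\in\Sigma$ and let $\alpha^{(i)}$ be the ramification sequence with $\sigma_{\alpha^{(i)}}=s_i$ and $a^{(i)}$ its vanishing sequence. As above, $s_i\cdot\sigma_{1,\cdots,1,0}\cdot s_{i+1}^\vee\neq 0$ forces $|\alpha^{(i+1)}|=|\alpha^{(i)}|+r$ together with the vertical-strip relation between $a^{(i)}$ and $a^{(i+1)}$, which is precisely the hypothesis of the converse half of Lemma~\ref{curvegrd}(1); hence for each $1\le i\le g$ there is a unique limit $\mathfrak{g}^r_d$ on $Y_i\cup E_i$ with vanishing sequence $a^{(i)}$ at $p_i$ and the complement of $a^{(i+1)}$ at $p_{i+1}$, and its elliptic aspect $(\mathcal{L}_{E_i},V_{E_i})$ is thereby determined. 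Taking $i=g$, the same condition forces $s_{g+1}=\sigma_\beta^\vee$, so $\sigma_{\alpha^{(g+1)}}(p_{g+1})$ and $\sigma_\beta(p_{g+2})$ are Poincaré dual and meet in a single reduced point, the unique aspect $L_{Y_{g+1}}$. I would then glue these aspects together: at each node $p_{i+1}$ the two adjacent aspects carry vanishing sequences complementary to one another by construction, so the refined-limit equalities $a_j^{L_{Y_i}}(p_{i+1})+a_{r-j}^{L_{Y_{i+1}}}(p_{i+1})=d$ hold automatically, while compatibility at each elliptic node is built into Lemma~\ref{curvegrd}(1). This produces a refined limit $\mathfrak{g}^r_d$ $L$ on $C_\infty$ with ramification at least $\alpha$ at $p_1$ and at least $\beta$ at $p_{g+2}$ and with $\Delta(L)=(s_i)$. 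Since every aspect was uniquely determined, $\Delta$ is also injective, and the two halves together give the bijection claimed in the theorem.

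The step I expect to be the main obstacle is the dictionary between the Schubert-calculus condition $s_i\cdot\sigma_{1,\cdots,1,0}\cdot s_{i+1}^\vee\neq 0$ and the combinatorial description in Lemma~\ref{curvegrd}(1). One has to move cleanly between ramification sequences, their duals $\alpha^\vee_k=d-r-\alpha_{r-k}$, Schubert indices, and vanishing sequences; keep track of the node-compatibility that turns the lemma's $b^{(1)}$ into the complement of the next vanishing sequence; and check that the only extra constraint on the vertical strip is that the resulting sequence stay non-increasing. That last point is what forces the relevant Pieri coefficient to be $0$ or $1$ rather than merely a nonnegative integer, which is exactly what upgrades the correspondence from a surjection to a bijection. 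A milder secondary point is the treatment of the end vertex $Y_{g+1}$: one must use that two Schubert cycles of complementary codimension in $\mathrm{Gr}(r,d)$ defined by flags at distinct points of $\mathbb{P}^1$ meet non-trivially if and only if they are Poincaré dual, and then transversally in one reduced point.
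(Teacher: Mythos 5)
Your proposal is correct and takes essentially the same route as the paper: the dimension count preceding the theorem pins down $\alpha^{(1)}=\alpha$ and $|\alpha^{(i+1)}|=|\alpha^{(i)}|+r$, and Lemma \ref{curvegrd}(1) is applied in both directions to pass between the aspects of a limit series and the chain of Schubert cycles. The only real difference is that you verify the nonvanishing $s_i\cdot \sigma_{1,\cdots,1,0}\cdot s_{i+1}^{\vee}\neq 0$ combinatorially via the dual Pieri (vertical-strip) rule and spell out the endpoint $Y_{g+1}$ and the gluing, whereas the paper deduces the nonvanishing directly from the geometric existence of the aspect on each $Y_i$ (using dimensional transversality of Schubert cycles attached to distinct points of $\mathbb{P}^1$) and treats the converse briefly; in your converse, note that the nonvanishing alone only gives $|\alpha^{(i+1)}|\geq|\alpha^{(i)}|+r$, with equality coming from the same telescoping budget argument using $\rho=0$ and the fixed endpoints.
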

    
    \begin{proof}
      As discussed above, the limit linear series $L$ 
    corresponds to a chain of Schubert cycles $\Delta(L)$.
     For any $1\leq i\leq g$, let the restriction of $L$  
     on the rational curve $Y_i$ be $L_i$. The ramification sequences 
     of $L_i$ at $p_i, \, p_{i+1}$ and its intersection with $E_i$ are
      $\alpha^{(i)},\,(\alpha^{(i+1)})^\vee,\,(1,\cdots,1,0)$ respectively, 
      where $(\alpha^{(i+1)})^\vee$ is the Schubert index of 
      $(\sigma_{\alpha^{(i+1)}})^\vee$. 
      The existence of such $L_i$  implies
    \begin{equation*}
    \sigma_{\alpha^{(i)}}\cdot\sigma_{1,\cdots,1,0}\cdot
    (\sigma_{\alpha^{(i+1)}})^\vee\neq 0,
    \end{equation*}
    hence the sequence
    \begin{equation*}
    \sigma_{\alpha^{(1)}}=\sigma_\alpha,\cdots,\sigma_{\alpha^{(g+1)}}=\sigma_\beta^\vee
    \end{equation*}
    belongs to $\Sigma$.
    
    Conversely, given a sequence in $\Sigma$:
    \begin{equation*}
    s_1=\sigma_\alpha,
    \: \cdots\: ,s_{g+1}=\sigma_\beta^{\vee}
    \end{equation*}
    such that  for any $1\leq i <g+1$,
    \begin{equation*}
    s_i\cdot \sigma_{1,\cdots,1,0}\cdot s_{i+1}^{\vee}\neq 0,
    \end{equation*}
    then by Lemma \ref{curvegrd} (1), 
    there exists a unique limit linear series $L$ on 
    $C_\infty$ such that its ramification sequences at points 
     $p_1$, $p_2$ are $s_1$, $s_2$ respectively.
    \end{proof}
    
    \vspace{\baselineskip}
    
    Combinatorics in Schubert calculus is often represented by
     Young tableaux. Here, we can also transform such sequences 
     of Schubert cycles into Young tableaux, making it easier to handle
      the combinatorics. This approach is particularly useful for studying monodromy actions.
       Bercov and Proctor\cite{BP87} were the first to use Young tableaux to 
       parameterize limit linear series.
    Now, we restate Theorem \ref{chain} using Young tableaux.
    
    Given  ramification sequences $\alpha$ and $\beta$, 
    we construct a (skew) Young diagram $\sigma = \sigma(g,r,d,\alpha,\beta)$ as 
    shown in Figure \ref{youngt}.
    We place the sequence $\alpha$ from bottom to top on the left side 
    of an $(r+1) \times (g-d+r)$ rectangle, 
    while the  sequence $\beta$ from top to bottom on the right side.
    The reversal of $\beta$ is due to the fact that the last term
     in the  sequence of Schubert cycles as above 
    is the Poincaré dual of $\sigma_\beta$.
    The assumption $\rho(g,r,d,\alpha,\beta)=0$ implies that 
     the entire Young diagram consists of $g$ boxes.

     \begin{figure}[H]
      \centering
      \includegraphics[width=0.9\textwidth]{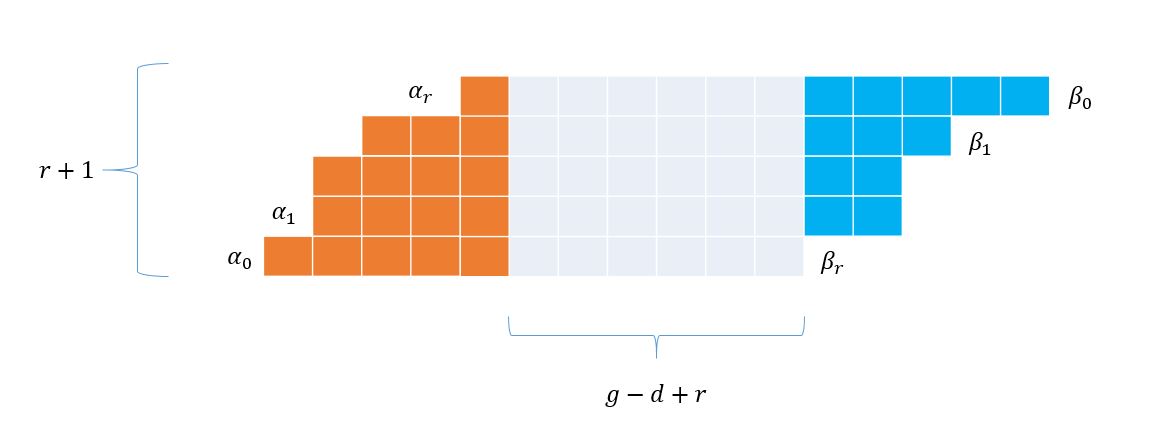}
      \caption{Young diagram corresponding to limit linear series on $C_\infty$}
      \label{youngt}
      \end{figure}
      
    We have 
    \begin{theorem}\label{ytcorrsp}
    Let $\alpha$ and $\beta$ be two ramification sequences, and suppose
    \begin{equation*}
    \rho(g,r,d,\alpha,\beta)=g-(r+1)(g-d+r)-|\alpha|-|\beta|=0.
    \end{equation*}
    Then the limit linear series $\mathfrak{g}^r_d$ on the 
    curve $C_\infty$ with ramification sequences at $p_1$ and $p_{g+2}$ 
    at least $\alpha$ and $\beta$ correspond 
    bijectively to the Young tableaux of the Young diagram $\sigma(g,r,d,\alpha,\beta)$.
    \end{theorem}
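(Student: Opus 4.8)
The plan is to reduce Theorem~\ref{ytcorrsp} to Theorem~\ref{chain}, which already establishes a bijection between limit linear series on $C_\infty$ (with the prescribed ramification) and the set $\Sigma$ of chains of Schubert cycles $s_1 = \sigma_\alpha, \dots, s_{g+1} = \sigma_\beta^\vee$ satisfying the Pieri-type nonvanishing condition $s_i \cdot \sigma_{1,\dots,1,0} \cdot s_{i+1}^\vee \neq 0$ for each $i$. Thus it suffices to exhibit a bijection between $\Sigma$ and the set of standard Young tableaux on the skew diagram $\sigma(g,r,d,\alpha,\beta)$. First I would set up the dictionary between Schubert indices in $\mathrm{Gr}(r,d)$ and Young diagrams fitting inside the $(r+1)\times(g-d+r)$ rectangle: a Schubert index $\gamma = (\gamma_0 \geq \dots \geq \gamma_r)$ with $g-d+r \geq \gamma_0$ corresponds to the Young diagram with row lengths $\gamma_0, \dots, \gamma_r$, and Poincaré duality $\gamma \mapsto \gamma^\vee$ corresponds to complementation within the rectangle. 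Under this dictionary, $\sigma_\alpha$ sits at the bottom-left (reading $\alpha$ from bottom to top gives the partition), and $\sigma_\beta^\vee$ is the complement of $\sigma_\beta$, which is exactly the shape obtained by placing $\beta$ from the top-right as in Figure~\ref{youngt}. Hence the chain starts at the sub-diagram $\lambda^{(1)}$ corresponding to $\sigma_\alpha$ and ends at $\lambda^{(g+1)}$ corresponding to $\sigma_\beta^\vee$, and the skew shape $\sigma = \lambda^{(g+1)} / \lambda^{(1)}$ is precisely $\sigma(g,r,d,\alpha,\beta)$, which has $g$ boxes since $\rho = 0$.

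The key step is to interpret the condition $s_i \cdot \sigma_{1,\dots,1,0}\cdot s_{i+1}^\vee \neq 0$ combinatorially. The Schubert class $\sigma_{1,\dots,1,0}$ (index with $r$ ones and a final zero) is, under the standard correspondence, the class of the single-box partition $(1,0,\dots,0)$ in $\mathrm{Gr}(r,d)$ — more precisely, one must check that $\sigma_{1,\dots,1,0}$ is the special Schubert class $\sigma_1$ whose intersection is governed by Pieri's rule. Then $s_i \cdot \sigma_1 \cdot s_{i+1}^\vee \neq 0$ is equivalent, by Poincaré duality, to $s_{i+1}$ appearing in the Pieri expansion of $s_i \cdot \sigma_1$, i.e., to the partition $\lambda^{(i+1)}$ being obtained from $\lambda^{(i)}$ by adding exactly one box (and that box lying inside the ambient rectangle, which is automatic since $\lambda^{(i+1)}$ is a genuine Schubert index). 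So a chain in $\Sigma$ is the same data as a sequence of partitions
\begin{equation*}
\lambda^{(1)} \subset \lambda^{(2)} \subset \dots \subset \lambda^{(g+1)}, \qquad |\lambda^{(i+1)}| = |\lambda^{(i)}| + 1,
\end{equation*}
starting at $\lambda^{(1)}$ and ending at $\lambda^{(g+1)}$; recording in box $\lambda^{(i+1)}/\lambda^{(i)}$ the label $i$ gives a standard Young tableau on the skew shape $\sigma(g,r,d,\alpha,\beta)$, and this assignment is manifestly a bijection with inverse given by reading off the sub-shapes $\lambda^{(i)} = \lambda^{(1)} \cup \{\text{boxes labelled} < i\}$.

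I would then combine the two bijections: limit linear series $\leftrightarrow$ $\Sigma$ (Theorem~\ref{chain}) and $\Sigma$ $\leftrightarrow$ standard Young tableaux on $\sigma(g,r,d,\alpha,\beta)$ (just constructed), to conclude. The main obstacle — really the only place demanding care rather than bookkeeping — is verifying that $\sigma_{1,\dots,1,0}$ is indeed the Pieri class $\sigma_1$ and that the nonvanishing triple product translates exactly to "add one box", including the boundary check that no box escapes the $(r+1)\times(g-d+r)$ rectangle. This follows from the intersection-theoretic fact cited after the Proposition on Schubert cycles (that intersections of Schubert cycles at distinct points of $\mathbb{P}^1$ have expected codimension) together with the classical Pieri rule, but one should spell out the index conventions carefully since the paper uses non-increasing ramification sequences and a specific normalization $\sigma_\alpha(q) = \{V : \dim(V \cap f^{(d-r+i-\alpha_i)}(q)) > i\}$. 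Once the conventions are pinned down, the correspondence is forced, and the statement follows.
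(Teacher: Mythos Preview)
Your reduction to Theorem~\ref{chain} is exactly right, and the overall plan of translating chains in $\Sigma$ into tableaux is sound. However, the key identification is incorrect: the class $\sigma_{1,\dots,1,0}$ (with $r$ ones) has codimension $r$, not $1$, so it is \emph{not} the single-box Pieri class $\sigma_1$. It is the column class $\sigma_{(1^r)}$, and the relevant multiplication is the dual Pieri rule: $s_i\cdot\sigma_{(1^r)}\cdot s_{i+1}^\vee\neq 0$ together with $|s_{i+1}|=|s_i|+r$ says that the partition $\lambda^{(i+1)}$ is obtained from $\lambda^{(i)}$ by adding a vertical strip of length $r$ (one box in each of $r$ of the $r+1$ rows). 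Consequently your chain $\lambda^{(1)}\subset\cdots\subset\lambda^{(g+1)}$ inside the Grassmannian rectangle $(r+1)\times(d-r)$ has $|\lambda^{(g+1)}|-|\lambda^{(1)}|=rg$ boxes, not $g$, and the skew shape $\beta^\vee/\alpha$ you obtain is not $\sigma(g,r,d,\alpha,\beta)$. (Relatedly, your ambient rectangle should be $(r+1)\times(d-r)$, not $(r+1)\times(g-d+r)$; these coincide only when $g=2(d-r)$.)

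The paper's argument bypasses this by recording the \emph{complementary} datum: at step $k$, Lemma~\ref{curvegrd} gives a unique index $j$ with $\alpha^{(k)}_j=\alpha^{(k+1)}_j$ (the one row that does \emph{not} receive a box of the vertical strip), and the label $k$ is placed in row $r-j+1$ of the new diagram $\sigma(g,r,d,\alpha,\beta)$. This produces one box per step, hence a $g$-box skew tableau; one then checks directly that the row and column monotonicity hold. Your approach can be repaired by making this complementary encoding explicit (a vertical $r$-strip in an $(r+1)$-row diagram is the same data as a single ``missing'' row), but as written the add-one-box claim fails and the resulting shape does not match.
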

    
    \begin{proof}
    By Theorem \ref{chain},  limit linear series on $C_\infty$ correspond
     bijectively to sequences in $\Sigma$. 
     Let $(\sigma_{\alpha^{(0)}}=\sigma_\alpha,
     \cdots,\sigma_{\alpha^{(g+1)}}=\sigma_\beta^{\vee})$ be a sequence
      in $\Sigma$. We now construct 
     the corresponding Young tableau inductively.
    
    Suppose $k\geq 1$ and the integers $1,2,\cdots,k-1$ have already been
     filled in the Young diagram $\sigma(g,r,d,\alpha,\beta)$. According to 
     Lemma \ref{curvegrd}, there exists a unique 
     $0\leq j\leq r$ such that $\alpha^{(k)}_j=\alpha^{(k+1)}_j$. To proceed,
      we fill $k$ into
      the first available empty box from the left in the $(r-j+1)$-th row. 
      The existence of this empty box is 
       guaranteed by the definition of $\Sigma$.
    
       We now demonstrate that the resulting tableau is a Young tableau, 
       meaning it increases from left to right and from top to bottom.  Firstly,
       each row is  increasing from left to right due to our construction. 
     Suppose that there exists a column where adjacent boxes in rows $j$ and 
     $j+1$ are filled with integers $k$ and $k'$ respectively, and $k>k'$. 
     When placing $k'$, the box directly above it must still be empty. 
     This implies $\alpha^{(k')}_{j+1}<\alpha^{(k')}_j$,
      contradicting the nonincreasing property of ramification sequences. 
      Therefore, the constructed tableau is indeed a Young tableau.
    
    Conversely, a Young tableau can be used to construct the 
    original sequence in a similar manner. Thus, limit linear series
     on the curve $C_\infty$ with ramification 
     sequences at $p_1$ and $p_{g+2}$ at least $\alpha$ and $\beta$ 
     correspond bijectively  to the Young tableaux of the Young diagram $\sigma$.
    \end{proof}
    \begin{example}\label{ex:youngtab}
      Consider a curve $C$ of genus $7$ and a $\mathfrak{g}^1_6$ on $C$, denoted 
      as $L$. Let  
      $\alpha = (1,0), \beta = (2,0)$ be ramification sequences. In this case, 
      we have 
      \[\rho(g,r,d,\alpha,\beta) = 7 - (1+1)*(7-6+1)-1-2 = 0.
      \]
      Assume that $L$ is represented as the sequence
      \[\sigma_{\alpha^{(1)}}=\sigma_{1,0},\,
      \sigma_{\alpha^{(2)}}=\sigma_{2,0},\,
      \sigma_{\alpha^{(3)}}=\sigma_{2,1},\,
      \sigma_{\alpha^{(4)}}=\sigma_{3,1},\,
      \sigma_{\alpha^{(5)}}=\sigma_{3,2},
      \]
      \[
      \sigma_{\alpha^{(6)}}=\sigma_{3,3},\,
      \sigma_{\alpha^{(7)}}=\sigma_{4,3},\,
      \sigma_{\alpha^{(8)}}=\sigma_{5,3}=(\sigma_{2,0})^\vee
      \]
      Given that $\alpha^{(1)}$ and $\alpha^{(2)}$   are identical  exactly at index $j=1$: 
      $\alpha^{(1)}_1=\alpha^{(2)}_1$, we place $1$ into the first empty box 
      from the left in the 
      $r-j+1=1$-th row. Similarly, since $\alpha^{(2)}$ and $\alpha^{(3)}$
      are identical exactly  at index $j=0$: 
      $\alpha^{(2)}_0=\alpha^{(3)}_0$, we place $2$ into the first empty box 
      from the left in the 
      $r-j+1=2$-th row. This process continues until all $g=7$ entries are filled, 
      resulting in the corresponding Young tableau shown below:
      \begin{figure}[H]
        \centering
        \includegraphics[width=0.5\textwidth]{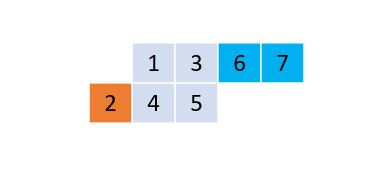}
        \caption{Young tableaux of $L$}
        \label{ytexample}
      \end{figure}
      Conversely, starting from such a Young tableau, we can
       reconstruct the original sequence of Schubert cycles in the same 
        manner.

    \end{example}
    
    \vspace{\baselineskip}

    \subsection{Combinatorial representation of certain monodromy actions}
    
    Next, we investigate the monodromy group action induced by the one-parameter
     family $C_{i,p}$ on the limit linear
     series of the curve $C_\infty$  with imposed ramification conditions.
     To proceed, we need the following lemma:

      \begin{lemma}[\cite{EH87}, Theorem 1.3]\label{branch}
        Let $a,b$ be two vanishing sequences, such that there 
        exist $0\leq i<j\leq r$ such that 
        \begin{enumerate}
          \item if $i>0$,
        $a^{(1)}_{i-1}<a_i^{(1)}-1;$
          \item $a^{(1)}_{j-1}<a^{(1)}_{j-1};$
          \item $b^{(1)}_{r-i}=d-a^{(1)}_{i-1};$
          \item $b^{(1)}_{r-j}=d-a^{(1)}_{j-1};$
          \item $\text{For any }k\neq i,j, \text{ we have }
          b^{(1)}_{r-k}=d-a^{(1)}_k-2,$
        \end{enumerate}
        
        then on $\mathbb{P}^1$, all vanishing sequences at $0$ are
         at least $a$, at $\infty$ are at least $b$,
        and at $1$ and another point $p\neq 0,1,\infty$ are cusps,
         the moduli space of linear series $\mathfrak{g}^r_d$
        is an irreducible rational curve $G$. The map from each
         linear series to the corresponding point $p$ gives a map $G\to\mathbb{P}^1$
        which is a double cover, branching at two points on 
        $\mathbb{P}^1\setminus\{0,1,\infty\}$, and these two
        branching points are determined by $a_j-a_i$. Conversely, any branching point also determines the value of $a_j-a_i$.
        \end{lemma}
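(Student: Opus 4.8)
The plan is to rigidify the linear series, then cut down to the pencil spanned by the two ``narrow'' sections; this reduces the whole statement to an explicit computation in the case $r=1$.

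\emph{Step 1 (rigidification and coordinates).} With conditions (1)--(5) read as in Lemma~\ref{curvegrd}(2) — so $a_{i-1}<a_i-1$ (if $i>0$), $a_{j-1}<a_j-1$, $b_{r-i}=d-a_i-1$, $b_{r-j}=d-a_j-1$, $b_{r-k}=d-a_k-2$ otherwise — one gets $|\alpha|+|\beta|=(r+1)(d-r)-2r$, so the imposed ramification weight $|\alpha|+|\beta|+2r$ exhausts the total weight $(r+1)(d-r)$ allowed by the Pl\"ucker formula on $\mathbb P^1$. Hence every $\mathfrak g^r_d$ in the moduli space is ramified only at $\{0,1,\infty,p\}$, with ramification exactly $\alpha$ at $0$, $\beta$ at $\infty$ and $(1,\dots,1,0)$ at $1$ and $p$; dividing out the fixed base locus at $0,\infty$ we may assume $a_0=b_0=0$ (this does not change $a_j-a_i$). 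Then $a_k+b_{r-k}$ equals $d-2$ for $k\ne i,j$ and $d-1$ for $k\in\{i,j\}$, so using the opposite flags at $0,\infty$ a general member $V$ of the moduli space has a canonical basis $v_k=t^{a_k}P_k(t)$ with $P_k$ monic of degree $2$ if $k\ne i,j$, of degree $1$ if $k\in\{i,j\}$. Finally, since $a_0(q)=0$ at $q\in\{1,p\}$, ``$V$ has ramification $(1,\dots,1,0)$ at $q$'' is equivalent to ``$a_1(q)\ge 2$'', i.e.\ to the codimension-$r$ condition that $\tfrac{a_k}{q}+\tfrac{P_k'(q)}{P_k(q)}$ be independent of $k$. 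Imposing this at $q=1$ and $q=p$ cuts the $2r$-parameter family $\{P_k\}$ down to a curve $G$; a Pieri-rule count (ordered pairs of size-$r$ vertical strips from $\alpha$ to $\beta^\vee$) shows $G\to\mathbb P^1_p$ has degree $2$, with conditions (1),(2) being exactly what makes this degree $2$ rather than $\le 1$.

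\emph{Step 2 (reduction to $r=1$).} Write $v_i=t^{a_i}(t+z_i)$, $v_j=t^{a_j}(t+z_j)$ and let $\Psi(V)$ be the pencil $\langle v_i,v_j\rangle$ with its base locus (a multiple of $[0]$ plus a multiple of $[\infty]$) removed; it is a $\mathfrak g^1_{d'}$ with $d'=a_j-a_i+1\ge 3$ (here condition (2) enters). A direct computation shows $\Psi(V)$ has ramification index $a_j-a_i$ at $0$ and at $\infty$, and that the $\{i,j\}$-part of the cusp equation of $V$ at a point $q$ — namely $a_i+\tfrac1{1+z_i}=a_j+\tfrac1{1+z_j}$ at $q=1$, and its analogue at $q=p$ — is exactly the condition for $\Psi(V)$ to be ramified at $q$; so by the Pl\"ucker count of Step~1, $\Psi(V)$ is unramified elsewhere and has genuine cusps at $1$ and $p$, i.e.\ $\Psi(V)$ lies in the $r=1$ instance $G'$ of the lemma with parameter $d'$. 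The construction commutes with ``record the fourth special point $p$'', and conversely from $\Psi(V)\in G'$ one recovers $z_i,z_j,p$, hence $v_i,v_j$, and then each remaining $P_k$ as the unique monic quadratic with $\tfrac{a_k}{q}+\tfrac{P_k'(q)}{P_k(q)}=\tfrac{a_i}{q}+\tfrac{P_i'(q)}{P_i(q)}$ for $q=1,p$. Thus $\Psi$ is birational, so $G$ is irreducible and it suffices to prove the lemma for $r=1$.

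\emph{Step 3 (the case $r=1$, and conclusion).} For $r=1$ one has $\alpha=\beta=(d-2,0)$ and such a $\mathfrak g^1_d$ is precisely $V=\langle t^{d-1}(at+b),\ ct+e\rangle$ with $a,b,c,e\ne 0$; normalizing $b=e=1$, the cusp conditions at $1$ and $p$ collapse (after eliminating $p$) to the single relation $a=-\tfrac{(d-2)c+(d-1)}{(d-1)c+d}$, so $G\cong\mathbb P^1_c$ is irreducible and rational, and $p=1/(ac)$ is a degree-$2$ rational function of $c$. Hence $G\to\mathbb P^1$ is a double cover; its ramification is the zero scheme of $(d-1)(d-2)c^2+2d(d-2)c+d(d-1)$, whose discriminant $-4d(d-2)$ is nonzero (note $d\ge 3$), so there are exactly two branch points, which one checks avoid $0,1,\infty$, whence $G$ has genus $0$ by Riemann--Hurwitz. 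For general $r$, transporting everything through the isomorphism $G\cong G'$ of Step~2 — defined over $\mathbb P^1_p$, with $G'$ depending only on $d'=a_j-a_i+1$ — gives that $G$ is an irreducible rational curve, that $G\to\mathbb P^1$ is a double cover branched at two points of $\mathbb P^1\setminus\{0,1,\infty\}$ depending only on $a_j-a_i$, and (inverting this explicit dependence) that a branch point determines $a_j-a_i$.

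\emph{Main obstacle.} Steps~1 and~3 are bookkeeping plus a two-variable computation; the crux is Step~2. One must check that $\Psi$ genuinely lands in $G'$ — that cusps of $V$ at $1,p$ really force $\Psi(V)$ to be cuspidal there, which is where the rigid shape of $b$ in (3)--(5) and the gaps (1),(2) are used — and that $\Psi$ is birational and intertwines the two maps to $\mathbb P^1_p$, so that the branch loci of $G\to\mathbb P^1_p$ and $G'\to\mathbb P^1_p$ literally coincide. One must also reconcile this with the properness of the moduli space, so that $G$ is honestly $\mathbb P^1$ rather than merely birational to it; this amounts to analyzing the finitely many points of $G$ at which the canonical basis $\{v_k\}$ degenerates and the limit linear series on $D'$ becomes strictly crude.
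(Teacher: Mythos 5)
This lemma is not proved in the paper at all: it is imported (with typos --- as printed, condition (2) reads $a^{(1)}_{j-1}<a^{(1)}_{j-1}$ and (3),(4) omit the ``$-1$''; your normalization to the hypotheses of Lemma~\ref{curvegrd}(2) is clearly the intended reading) from Theorem~1.3 of \cite{EH87}, so there is no internal proof to compare against. Judged on its own terms, your sketch follows essentially the same route as the Eisenbud--Harris argument: rigidify a general member by a basis $v_k=t^{a_k}P_k(t)$ adapted to the flags at $0,\infty$ (your Pl\"ucker count showing the ramification is exactly $\alpha,\beta,(1,\dots,1,0),(1,\dots,1,0)$ and nothing else is correct), rewrite the cusp conditions as equality of logarithmic derivatives, pass to the pencil $\langle v_i,v_j\rangle$, and finish with an explicit computation for $r=1$. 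I checked the $r=1$ formulas: the relation $a=-\frac{(d-2)c+(d-1)}{(d-1)c+d}$, the identity $p=1/(ac)$, the ramification scheme cut out by $(d-1)(d-2)c^2+2d(d-2)c+d(d-1)$ and its discriminant $-4d(d-2)$ are all right, and the reduction of the pencil does have vanishing $(0,d'-1)$ at $0$ and $\infty$ with $d'=a_j-a_i+1\geq 3$ forced by condition (2).

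Three points, however, are asserted rather than proved. First, the claim that $G\to\mathbb{P}^1_p$ has degree $2$ is precisely where hypotheses (1),(2) enter; the Pieri count (the two orders in which $i$ and $j$ can serve as the ``fixed'' index in the two steps, both orders giving admissible intermediate sequences) must actually be carried out, and to know that no component of $G$ sits over a special value of $p$ --- which your bookkeeping over $G'$ needs in order to yield irreducibility --- you must invoke the dimensional transversality of Schubert intersections for flags osculating the rational normal curve at \emph{arbitrary} distinct points (\cite{EH83}), not just generic ones. Second, the converse clause, that a branch point determines $a_j-a_i$, is not established by saying ``invert the dependence'': one must compute the two branch points in the $p$-line as an explicit function of $d'$ and verify that distinct values of $d'$ give distinct branch loci (this is exactly what the explicit quadratic in \cite{EH87} delivers). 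Third, as you yourself flag, your argument identifies $G$ with $\mathbb{P}^1$ only birationally and away from the locus where the adapted basis degenerates (excess vanishing at $0,\infty$); the branch locus of a degree-$2$ cover is a birational invariant, but the statements ``$G$ is an irreducible rational curve'' and ``the branch points avoid $\{0,1,\infty\}$'' concern the actual proper moduli space and require that boundary analysis. None of these looks like an essential obstruction, but as written the proposal is a sketch of the \cite{EH87} computation rather than a complete proof.
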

        \vspace{\baselineskip}
    
        We denote the subgroup of the monodromy group generated by the one-parameter families $C_{i,p}$ as the EH group. Using Lemmas \ref{curvegrd} and \ref{branch},
         we can determine how the EH group acts on the Young tableaux.
    
         The distance between two boxes in a Young diagram is defined as the sum of their horizontal and vertical distances. Specifically, the distance between adjacent boxes is $1$. Following
          Edidin's arguments in \cite{Ed91} Proposition 1, we establish that:
     \begin{proposition}
     The EH group is generated by elements $\{\pi_{t,a}: t,a\in\mathbb{Z}, 
     1\leq t<g, a>0\}$.
     The action $\pi_{t,a}$ exchanges the entries $t$ with $t+1$  in all Young
      tableaux of shape $\sigma$,
      where $t$ and $t+1$  are in different rows and columns and 
      have a distance of $a$.
     \end{proposition}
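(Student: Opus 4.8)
The plan is to translate the geometric statement about the one-parameter families $C_{i,p}$ into the combinatorial language of Young tableaux via Theorem~\ref{ytcorrsp}, and then carry out the analysis of the monodromy exactly as Edidin does in \cite{Ed91}, Proposition~1, but keeping track of the imposed ramifications $\alpha,\beta$. The starting point is that a limit linear series on $C_\infty$ corresponds to a filling of the skew diagram $\sigma(g,r,d,\alpha,\beta)$, where the box labelled $t$ records which index $j$ satisfies $\alpha^{(t)}_j = \alpha^{(t+1)}_j$, i.e.\ which ``step'' is taken between the $t$-th and $(t+1)$-th rational components. Degenerating to $C_{i,p}$ merges the bookkeeping for components $Y_i, Y_{i+1}$: the relevant local picture becomes the curve $D'$ of Lemma~\ref{curvegrd}(2), where two steps are taken at once, and the comparison between the two vanishing sequences is governed by the data $i<j$ appearing there.

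First I would make precise which limit linear series on $C_\infty$ lie over a given limit linear series on $C_{i,p}$: by the equality analysis already carried out in Section~3 ($|\alpha^{(i+2)}| = |\alpha^{(i)}| + 2r$ on $C_{i,p}$), the fiber records exactly the choice of \emph{order} in which the two steps between levels $i$ and $i+2$ are inserted. When the two steps are in the same row or the same column of $\sigma$, Lemma~\ref{curvegrd}(2) shows there is a unique way to do this, so the family is unramified there and the monodromy is trivial. When the two steps $t$ (going into row $r-i+1$, say) and $t+1$ (going into a different row and a different column) can be performed in either order—this is precisely the exceptional case of Lemma~\ref{curvegrd}(2), with the integers there being the two relevant row indices—there are two limit linear series over the generic point of $C_{i,p}$, and Lemma~\ref{branch} tells us the $C_{i,p}$-family is a double cover of $\mathbb{P}^1$ branched at two points. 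Hence the monodromy of $C_{i,p}$ acts as the transposition swapping these two limit linear series, which on the level of tableaux is exactly the swap of the entries $t$ and $t+1$. Next I would observe that varying $i$ over $1,\dots,g$ and varying $p$ lets us realize, for each valid tableau and each $t$ such that $t$ and $t+1$ occupy distinct rows and distinct columns, the transposition exchanging $t$ and $t+1$—and that the distance $a$ between the two boxes is an invariant preserved by all these swaps (since swapping $t,t+1$ only permutes which box holds which of the two labels, not the underlying pair of box-positions used at that stage), which is why the generators must be indexed by the pair $(t,a)$. This is the content of the definition of $\pi_{t,a}$.

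The final step is to check that these $\pi_{t,a}$ generate the whole EH group, i.e.\ that no other monodromy comes from the families $C_{i,p}$. For this I would argue, as in \cite{Ed91}, that the only source of branching in the family $C_{i,p}$ is the exceptional case of Lemma~\ref{curvegrd}(2) (all other local configurations giving étale maps near $C_\infty$), so every generator of the EH group is a product of the double-cover transpositions described above; and each such transposition, read on tableaux, is one of the $\pi_{t,a}$. One must also verify the constraint built into the statement: when $t$ and $t+1$ lie in the same row or the same column of some tableau, $\pi_{t,a}$ acts trivially on that tableau (consistently, since the $C_{i,p}$-family is unramified there), so $\pi_{t,a}$ is well defined as stated only on the locus of tableaux where the two boxes are in different rows and columns at distance $a$; this matches the phrasing "exchanges the entries $t$ with $t+1$ in all Young tableaux of shape $\sigma$ where $t$ and $t+1$ are in different rows and columns and have a distance of $a$."

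The main obstacle I anticipate is the careful dictionary between the two combinatorial encodings in Lemma~\ref{curvegrd}(2) and the tableau picture: one has to check that the integers $i<j$ governing the exceptional case translate into "the boxes of $t$ and $t+1$ are in different rows and columns," that the branch data $a_j - a_i$ of Lemma~\ref{branch} translates into the box-distance $a$, and that every exceptional configuration on $D'$ actually arises from a legal tableau of $\sigma(g,r,d,\alpha,\beta)$ (so that imposing the ramifications $\alpha,\beta$ does not spuriously kill some of the transpositions). Once this dictionary is nailed down, the rest is a formal repetition of Edidin's Proposition~1 with $\sigma(g,r,d,\alpha,\beta)$ in place of the staircase diagram.
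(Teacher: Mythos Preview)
Your proposal is correct and follows essentially the same route as the paper: reduce to the local curve $D'$, invoke Lemma~\ref{curvegrd}(2) for the dichotomy (one vs.\ two limit series), use Lemma~\ref{branch} for the double-cover monodromy, and then translate to tableaux via the dictionary of Theorem~\ref{ytcorrsp}, checking that $a_j-a_i$ becomes the box distance.

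One point deserves sharpening. You write that varying $i$ and $p$ ``lets us realize, for each valid tableau and each $t$ \dots, the transposition exchanging $t$ and $t+1$,'' and only afterwards explain why the generators are indexed by $(t,a)$. This phrasing risks suggesting that individual transpositions are achievable. The paper's argument (and the correct one) is the reverse: Lemma~\ref{branch} says the branch locus of the double cover $G\to\mathbb{P}^1$ depends \emph{only} on the value $a=a_j-a_i$, so a small loop around a branch point simultaneously swaps \emph{every} pair of limit series sharing that value of $a$. Hence the monodromy element one obtains is forced to be the product $\pi_{t,a}$ of all such transpositions, not a single one. Your ``invariant preserved by all these swaps'' remark is a consequence of this, not the reason for it. Once you make that causality explicit, your plan matches the paper's proof in all essentials, including the final distance computation $a=\alpha^{(t)}_j-\alpha^{(t)}_i+(j-i)$ that you flag as the main thing to verify.
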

    
     \begin{proof}
      Let $L_1$ and $L_2$ be limit linear series  
      on $C_{\infty}$ and correspond to sequences
      \begin{equation*}
        \Delta(L_1):\: \sigma_{\alpha^{(1)}}=\sigma_\alpha,\:
        \cdots,\sigma_{\alpha^{(t-1)}},\:\sigma_{\alpha^{(t)}},\:
        \sigma_{\alpha^{(t+1)}},\:
        \cdots,:\sigma_{\alpha^{(g+1)}}=\sigma_\beta^{\vee}
        \end{equation*}
        \begin{equation*}
          \Delta(L_2):\: \sigma_{\beta^{(1)}}=\sigma_\alpha,\:
          \cdots,\sigma_{\beta^{(t-1)}},\:\sigma_{\beta^{(t)}},\,
          \sigma_{\beta^{(t+1)}},\:
          \cdots,:\sigma_{\beta^{(g+1)}}=\sigma_\beta^{\vee}
          \end{equation*}

     We translate the conditions on vanishing sequences on the limit $\mathfrak{g}^r_d$ on
      the curve $D'$  in Lemma \ref{curvegrd} into conditions 
      on the ramification sequences $\alpha^{(1)}$ and $\alpha^{(2)}$: 
      there exist $0\leq i<j\leq r$ such that 
        \begin{enumerate}
          \item if $i>0$,
        $\alpha^{(1)}_{i-1}<\alpha^{(1)}_i;$
          \item $\alpha^{(1)}_{j-1}<\alpha^{(1)}_j;$
          \item $\alpha^{(2)}_i=\alpha^{(1)}_i+1;$
          \item $\alpha^{(2)}_j=\alpha^{(1)}_j+1;$
          \item $\text{for any } k\neq i,j, \: \alpha^{(2)}_i=\alpha^{(1)}_i+2.$
        \end{enumerate}


     Therefore, some family $C_{t,p}$ exchanges $L_1$ with $L_2$ only if 
     there exist $1\leq t<g$ and $0\leq i<j\leq r$ such that for 
     any $k\neq i,j$, we have $\alpha^{(t+1)}_k=\beta^{(t+1)}_k$,
     and
     \begin{equation*}
     \alpha^{(1)}=\beta^{(1)},\:\cdots,\:\alpha^{(t)}=\beta^{(t)},\:
     \alpha^{(t+2)}=\beta^{(t+2)},\:\cdots,\:\alpha^{(g+1)}=\beta^{(g+1)}.
     \end{equation*}
     This is because $\alpha^{(t)},\alpha^{(t+2)}$ correspond to
      vanishing sequences
      $a^{(t)},a^{(t+2)}$, which satisfy  the conditions 
     in Lemma \ref{branch}. Thus,
     there exists a monodromy action exchanging the restriction of $L_1$ and $L_2$
      on the union of $Y_t$ and $Y_{t+1}$ in $C_\infty$, leaving the other
       parts 
      of the limit linear series unchanged, thereby exchanging 
      the limit linear series $L_1$ with $L_2$ on 
      $C_\infty$. Let $a=a^{(t-1)}_j-a^{(t-1)}_i$. 
      From Lemma \ref{branch}, we know that the branching points are 
      determined by $a$, indicating that  the number of 
      all possible branching points is finite. 
      Hence, there exists  a monodromy action that 
      exchanges the limit linear series with
       $a^{(t-1)}_j-a^{(t-1)}_i=a$. This action, denoted by $\pi_{t,a}$, is
        the product of all such transpositions and 
        is determined only by  $t$ and $a$.
     
     Let $S$ and $T$  be the Young tableaux representation of 
      $L_1$ and $L_2$
     respectively. 
     According to Theorem \ref{chain}, 
     each step in the chain of Schubert cycles for the limit linear series involves keeping one of the Schubert indices unchanged and incrementing
      the rest by $1$, while maintaining the monotonicity of the indices.
      Since $L_1$ and $L_2$ are exactly 
      the same  except for $\sigma_{\alpha^{t+1}}$,  the only difference between $L_1$ and $L_2$
      is the order in which the fixed indices are chosen in the steps:
     \begin{equation*}
     \sigma_{\alpha^{t}}\supset \sigma_{\alpha^{t+1}}
     \supset\sigma_{\alpha^{t+2}}
     \end{equation*}
     and
     \begin{equation*}
     \sigma_{\beta^{t}}\supset \sigma_{\beta^{t+1}}
     \supset\sigma_{\beta^{t+2}}.
     \end{equation*}
      Therefore, 
      when represented using Young tableaux,  the difference between
       $S$ and $T$ is that $t$ and $t+1$ are exchanged.
     
     Since $S$ and $T$ are both Young tableaux, they satisfy the 
     monotonicity of rows and columns, meaning $t$ and $t+1$ cannot
      be in the same row or column in either $S$ or $T$.
      Moreover, according to the construction of  the Young tableau,
      the horizontal distance between $t$ and $t+1$ in $S$ is exactly
       $\alpha^{(t)}_j-\alpha^{(t)}_i$, and 
       the vertical distance between $t$ and $t+1$ in $S$  is exactly $j-i$.
       Given that 
       \begin{equation*}
       a=a^{(t)}_j-a^{(t)}_i=\alpha^{(t)}_j-\alpha^{(t)}_i+(j-i),
       \end{equation*} we conclude  the
        distance between $t$ and $t+1$ in the Young tableaux $S$ and $T$ is 
        exactly
        $a$.
    
    Therefore, the monodromy action $\pi_{t,a}$ on the
     Young tableaux representation of the limit linear series 
     is to exchange $t$ with $t+1$ in all Young tableaux 
     where $t$ and $t+1$  are located in different rows and columns and 
     have a distance of $a$.
    \end{proof}
    
    \begin{example}
      Let $L$ be the limit linear series in the Example \ref{ex:youngtab}. 
      The 
      Young tableau representation $T$ of $L$ is shown on the left 
      in Figure \ref{youngtab1}. 
    \begin{enumerate}
      \item The action 
      $\pi_{3,2}$ exchanges $3$ with $4$ whenever $3$ and $4$ 
      are at a distance  of $2$. Thus $\pi_{3,2}$  transforms
       the tableau $T$ of $L$ into the tableau shown on the right below.
      . 
    \begin{figure}[H]
        \centering
        \includegraphics[width=0.8\textwidth]{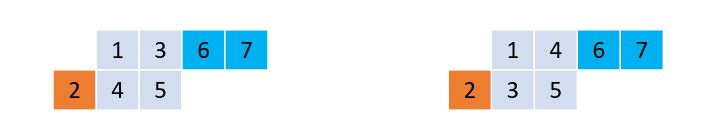}
        \caption{The action of $\pi_{3,2}$}
        \label{youngtab1}
    \end{figure}
       \item The action of $\pi_{3,3}$ leaves $T$ unchanged since $3$ and $4$
       are not at a distance of $3$ in $T$.
       \item For any $k\geq 1$, the action of $\pi_{4,k}$ will fix $T$ because $4$ and $5$ 
       are in the same row in $T$.
   
    \end{enumerate}

  \end{example}

    \section{Doubly Transitivity of the monodromy action}

    Let $YT(g,r,d,\alpha,\beta)$ be the collection of all Young 
    tableaux of shape $\sigma$, then we have
    
    \begin{proposition}\label{transitive}
    The action of the EH group on $YT(g,r,d,\alpha,\beta)$ is transitive.
    \end{proposition}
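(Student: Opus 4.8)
The goal is to show that the EH group acts transitively on the set $YT(g,r,d,\alpha,\beta)$ of standard Young tableaux of the fixed skew shape $\sigma$. The strategy is the classical one: fix a distinguished ``canonical'' tableau $T_0$ — for definiteness, the one obtained by filling $\sigma$ along successive rows from the top, i.e. row-by-row left-to-right — and show that every tableau $T$ can be transported to $T_0$ by a sequence of the generators $\pi_{t,a}$. Concretely, I would argue by a bubble-sort / induction on the entries. Locate the box of $\sigma$ that $T_0$ fills with the entry $1$: it is the leftmost box of the top row. Show that in an arbitrary $T$ one can, using the generators $\pi_{t,a}$, move the entry $1$ into that box; then freeze that box and induct on the smaller skew shape obtained by deleting it, whose tableaux are still acted on by the remaining generators.

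The mechanism for moving a small entry ``backwards'' is the key local move. Suppose in $T$ the entry $t$ sits in a box $B$ and the entry $t-1$ sits in a box $B'$ with $B'$ \emph{not} immediately left of or immediately above $B$ (equivalently, $B'$ and $B$ lie in different rows and different columns). Because $t-1<t$ and $T$ is standard, the only constraint linking their positions is row/column monotonicity, and one checks that swapping $t-1$ and $t$ yields another standard tableau; this swap is exactly $\pi_{t-1,a}$ where $a$ is the distance between $B$ and $B'$. Iterating, an entry that is ``too large'' for its position can be decreased step by step until it reaches the position demanded by the target tableau, provided at each stage the box immediately left of and the box immediately above are already correctly filled (which the induction guarantees). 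The careful bookkeeping here — verifying that at the moment we want to apply $\pi_{t,a}$ the entries $t,t+1$ genuinely lie in different rows and columns, so that the generator actually moves our tableau rather than fixing it — is where one must be attentive; this is precisely the point Edidin handles in \cite{Ed91} Proposition 1, and the present setting (a skew shape rather than a straight shape) changes nothing essential because the defining inequalities of $\Sigma$ in Theorem \ref{chain} are exactly what guarantee every intermediate filling stays a genuine element of $\Sigma$, hence of $YT(g,r,d,\alpha,\beta)$.

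An alternative, perhaps cleaner, organization: translate back through Theorem \ref{chain} and argue on the chains of Schubert cycles directly. Two chains $\Delta(L_1),\Delta(L_2)$ that agree outside one position $\sigma_{\alpha^{(t+1)}}$ differ by an allowable move exactly when the conditions of Lemma \ref{branch} hold, and any two chains can be connected by such elementary moves since at each step the choice of ``which index to fix'' is a free choice constrained only by monotonicity. Performing these one-position changes in a left-to-right sweep realizes an arbitrary target chain, and each one-position change is effected by some $\pi_{t,a}$. Either way, the content is: (i) identify a normal form; (ii) show the generators suffice to reach it; (iii) check that every intermediate step stays inside $YT(g,r,d,\alpha,\beta)$.

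**Main obstacle.** The only genuinely delicate point is the ``non-triviality of the move'' bookkeeping: a generator $\pi_{t,a}$ acts as the identity on tableaux in which $t$ and $t+1$ are in the same row or same column, so when planning the sequence of swaps one must be sure that, at the precise moment $\pi_{t,a}$ is invoked, the current tableau has $t$ and $t+1$ in different rows and columns with the correct distance $a$. Managing the induction so that this is automatic — e.g. by always operating on the smallest not-yet-fixed entry and exploiting that its predecessor has already been placed in its final (frozen) position — is the crux, and it is exactly the adaptation of Edidin's argument to the two-ramification skew setting.
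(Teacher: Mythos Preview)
Your plan is correct and essentially identical to the paper's: fix a canonical tableau and show that any other tableau can be brought strictly closer to it by swapping a consecutive pair $M-1,M$ that one verifies must lie in different rows and different columns. The paper chooses the \emph{column}-first tableau $S$ and packages the argument as ``any $T\neq S$ can be made lexicographically smaller by a single $\pi_{M-1,a}$,'' rather than your row-first target with an explicit freeze-and-induct, but this is a cosmetic difference---the key verification (that the predecessor of the first misplaced entry is forced into a different row and column) is the same in both.
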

    
    \begin{proof}
    Assign a lexicographic order to the set
     $YT(g,r,d,\alpha,\beta)$:  
     column-wise from left to right and top to bottom within each column. 
     In other words, the smallest Young tableau $S$ is the one
      where $1,2,\cdots,g$ are sequentially filled into 
     each column from left to right, and within each column from top to bottom.
      We only need to prove that any Young tableau
       can be moved to $S$ by an EH group element. 
       In fact, we only need to prove that for any Young tableau $T\neq S$, 
       there exists an EH group element that move it to a Young 
       tableau $T'$ that is smaller than $T$ in the lexicographic order. Since 
       this process will eventually 
       stop after a finite number of steps, the final tableau must be $S$.

    Given a Young tableau $T\neq S$, let $M$ be the 
    first integer different from $S$ in the lexicographic order. 
    Then $M-1$ must not be to the left or above $M$ in $T$, 
    because the left and upper sides of $M$ in $T$ are the 
    same as those in $S$ and $M$ is the first integer different from $S$. 
    Additionally, according to the definition of Young tableaux, 
    the integers below and to the right of $M$ are larger than $M$, 
    so $M$ and $M-1$ are in different rows and columns. Let $a$ be their 
    distance, then the monodromy action
     $\pi_{M-1,a}$ in the EH group moves $T$ to $T'$, 
     which is smaller than $T$ in the lexicographic order. 
     
    \end{proof}

    \begin{theorem}\label{irred}
    Let $\alpha,\beta$ be two ramification sequences and suppose that
    \begin{equation*}
    \rho(g,r,d,\alpha,\beta)=g-(r+1)(g-d+r)-|\alpha|-|\beta|=0.
    \end{equation*}
    Then there exists a family of twice-marked smooth projective curves 
    $(\mathcal{C}/B,p,q)$ such that $G^r_d(\mathcal{C}/B,(p,\alpha),(q,\beta))$ is
    irreducible.
    \end{theorem}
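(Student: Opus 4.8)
The plan is to leverage the combinatorial machinery already developed: by Theorem~\ref{ytcorrsp}, the limit linear series on $C_\infty$ with the prescribed ramifications are in bijection with the standard Young tableaux of the skew shape $\sigma = \sigma(g,r,d,\alpha,\beta)$, and these tableaux are permuted by the monodromy of any family degenerating to $C_\infty$. In particular the EH subgroup of the monodromy acts on $YT(g,r,d,\alpha,\beta)$, and Proposition~\ref{transitive} tells us this action is transitive. The key point is that a finite \'etale cover is connected (i.e.\ the total space is irreducible) precisely when the monodromy acts transitively on a fiber, so once we exhibit an actual smoothing family whose special fiber is $C_\infty$ and whose monodromy contains the EH group, irreducibility of $G^r_d(\mathcal{C}/B,(p,\alpha),(q,\beta))$ follows.

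Concretely, first I would choose the base $B$ to be (an \'etale neighborhood of a point in) the moduli space $\overline{\mathcal{M}}_{g,2}$ — or a suitable blow-up of it along the boundary strata corresponding to the curves $C_{i,p}$ — equipped with the universal twice-marked curve $\mathcal{C}/B$, arranged so that one fiber is $C_\infty$ and so that each one-parameter family $C_{i,p}$ studied in Section~3 sits inside $B$ as a small disc meeting the $C_\infty$-locus. One must check that this is a genuine smoothing family in the sense required by Theorem~\cite{EH86}~3.3 and that $G := G^r_d(\mathcal{C}/B,(p,\alpha),(q,\beta))$ is finite \'etale over the locus of smooth curves: both follow from $\rho = 0$, since on a general smooth twice-marked curve the space of such $\mathfrak{g}^r_d$'s is a finite reduced set of $N(g,r,d,\alpha,\beta)$ points (by the dimension count and reducedness cited in the introduction), and properness of $G$ over $B$ holds because $C_\infty$ has no strictly crude limit series with these ramifications (a consequence of the equality analysis preceding Theorem~\ref{chain}, which forces all the intermediate inequalities to be equalities, hence refinedness).

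Next I would identify the fiber of $G$ over $[C_\infty]$ with $YT(g,r,d,\alpha,\beta)$ via Theorem~\ref{ytcorrsp}, and invoke the discussion of Section~3.2: moving along the one-parameter family $C_{i,p}$ and going around the branch points described in Lemma~\ref{branch} realizes the permutation $\pi_{t,a}$ of the tableaux as an element of the monodromy group of $G/B$. Hence the monodromy group contains the EH group, which by Proposition~\ref{transitive} acts transitively on the fiber. Therefore the \'etale cover $G^{\mathrm{sm}} \to B^{\mathrm{sm}}$ over the smooth locus is connected; since $G$ is proper over $B$ and $G^{\mathrm{sm}}$ is dense in $G$ (the singular fibers being codimension $\geq 1$ in $B$ and $G$ being equidimensional over $B$), the whole of $G$ — or at least the component of it that dominates $B$, which is what we want to call $G^r_d(\mathcal{C}/B,(p,\alpha),(q,\beta))$ — is irreducible. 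Restricting to a one-dimensional $B$ through $[C_\infty]$ meeting the smooth locus then gives the asserted family of twice-marked smooth projective curves.

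The main obstacle I anticipate is not the topology (connectedness from transitive monodromy is standard) but the geometric bookkeeping: making rigorous that the abstract one-parameter families $C_{i,p}$ of Section~3 actually embed in a single smoothing family $\mathcal{C}/B$ in a way compatible with the Eisenbud--Harris scheme $G$, so that the tableau permutations $\pi_{t,a}$ are genuinely monodromy transformations of $G/B$ and not merely of some auxiliary combinatorial object. This requires care with the stable-reduction picture relating $C_{i,p}$ to $C_\infty$ (the blow-up at $p = \infty$ described in Section~3), with the compatibility of limit linear series under this stable reduction, and with ensuring the branch loci of Lemma~\ref{branch} lie over the part of $B$ parmeterizing smooth curves so that the loops are honest loops in $B^{\mathrm{sm}}$. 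Once that is set up, the proof is essentially a citation of Theorem~\ref{ytcorrsp}, Proposition~\ref{transitive}, and the standard correspondence between connectedness of \'etale covers and transitivity of monodromy.
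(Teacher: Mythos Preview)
Your proposal is correct and follows essentially the same approach as the paper: choose a family of twice-marked curves degenerating to $C_\infty$ through the intermediate curves $C_{i,p}$, identify the special fiber of $G$ with $YT(g,r,d,\alpha,\beta)$ via Theorem~\ref{ytcorrsp}, and conclude irreducibility from the transitivity of the EH group (Proposition~\ref{transitive}) together with the standard equivalence between transitive monodromy and connectedness.

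The one place where the paper's argument is slightly cleaner than yours is exactly the obstacle you flag at the end. You worry about arranging the branch loci of Lemma~\ref{branch} to lie over $B^{\mathrm{sm}}$, but in fact the $C_{i,p}$ are themselves reducible, so those branch points never lie over the smooth locus. The paper sidesteps this by invoking Harris's result \cite{Ha79} that the monodromy group is a birational invariant: one shrinks $B$ so that $G \to B$ is smooth, extends $G$ to the stable limits along one-parameter families (as permitted by \cite{EH86}), and then computes the monodromy directly on the fiber over $C_\infty$ without needing the loops to live in $B^{\mathrm{sm}}$. This replaces your \'etale-cover bookkeeping with a single citation and avoids the delicate compatibility checks you anticipate.
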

    Actually, it can be seen from the proof that this theorem is true
    for any sufficiently small irreducible family of 
    twice-marked smooth curves, whose stable limits contain all curves of the  
    form in Figure \ref{reduciblecurve}.
    
    \begin{figure}[H]
        \centering
        \includegraphics[width=0.3\textwidth]{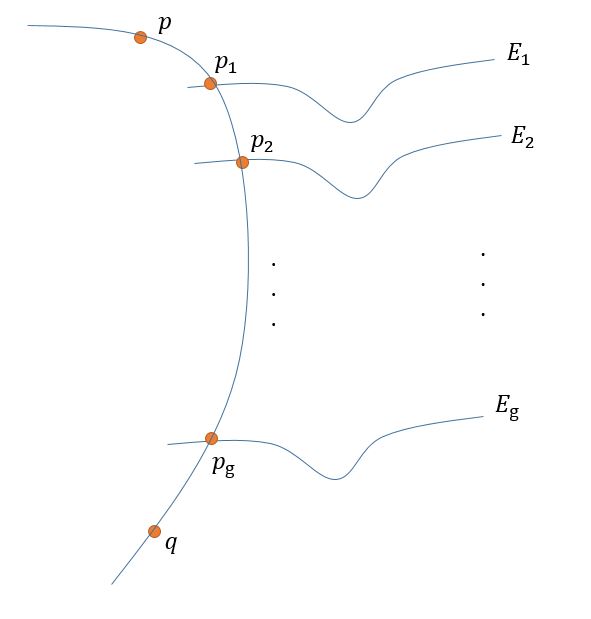}
        \caption{Reducible curve}
        \label{reduciblecurve}
    \end{figure}
    
    \begin{proof}
    Let $(\mathcal{C}/B,p,q)$ be an irreducible family 
    of twice-marked smooth curves and its stable limit contains all stable curves of the form in Figure \ref{reduciblecurve}. Then
    the stable curve obtained by letting $p_i$ 
    approach $p_{i+1}$ will add 
    another rational curve $\mathbb{P}^1$. Repeatedly performing this 
    operation will eventually obtain the curves $C_{i,p}$ and $C_\infty$. 
    Therefore, 
the stable limit of this family include all $C_{i,p}$ and $C_\infty$. 
    
    Shrink $B$ if necessary so that the family 
    $G^r_d(\mathcal{C}/B,(p,\alpha),(q,\beta))$ is smooth. To prove that 
    $G^r_d(\mathcal{C}/B,(p,\alpha),(q,\beta))$ is irreducible, 
    it suffices to show that the monodromy action on the fiber is transitive.
    
    According to \cite{EH86}, the family 
     $G^r_d(\mathcal{C}/B,(p,\alpha),(q,\beta))$ can be extended to
      its stable limit (at least along one-parameter families). 
      Harris \cite{Ha79} proved that
       the monodromy group is birational invariant, 
       so it is enough to show that the action on the fiber
        of $C_\infty$ is transitive. Now by Proposition \ref{transitive}, 
        the group action induced by the one-parameter
         families $C_{i,p}$ on the fiber of $C_\infty$ has
         already been transitive, so we are done.
    \end{proof}
    
    
    More generally, under certain conditions, 
    we can prove that the action of the EH group
     on $YT(g,r,d,\alpha,\beta)$ is doubly transitive.
    
    \begin{definition}
    Let $G$ be a group acting on a set $S$. The action of $G$ on $S$ is 
    \textit{doubly transitive} if for 
    any $x,y,w,z\in S$ with $x\neq y$ and $z\neq w$, there exists 
    a group element $g\in G$ such that $g\cdot x=y$ and $g\cdot z=w$.
    \end{definition}

    \begin{lemma}\label{generaltransitive}
        If the given ramification sequences $\alpha,\beta$ satisfy
        the following conditions:
        \begin{enumerate}
            \item $\max\{\alpha_0-\alpha_{r},1\} < \min_{0\leq j<r}\{\alpha_{r-j-1}+
            \beta_{j}\}+g-d$;
            \item $\sum_{i=0}^{r}(\alpha_i - \alpha_r) < \alpha_r + g - d + r + \beta_r$;
            \item For any $0\leq i < r$, 
            $\alpha_{r-i}+\beta_i  - 1 < 
            \min_{i\leq j<r}\{\alpha_{r-i-j-1}+\beta_{j}\}$;
              \item For any $0 < i <r$, $\alpha_{r-i}+\beta_i =
              \alpha_{r-1}+\beta_1\geq \alpha_r+\beta_0$;  
        \end{enumerate}
        then the action of the EH group 
        on $YT(g,r,d,\alpha,\beta)$ is doubly transitive.
    \end{lemma}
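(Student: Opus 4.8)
The plan is to upgrade the transitivity of Proposition~\ref{transitive} to double transitivity by the standard reduction to a point stabilizer, carried out through a \emph{relative} version of the sorting argument used there. Recall that a transitive permutation group is doubly transitive exactly when the stabilizer of one point acts transitively on the complement. So I would fix the base point $S\in YT(g,r,d,\alpha,\beta)$ to be the lexicographically smallest tableau (the entries $1,\dots,g$ filled column by column, and within each column from top to bottom), write $G$ for the EH group and $G_S$ for its stabilizer of $S$, and reduce the lemma to showing that $G_S$ acts transitively on $YT(g,r,d,\alpha,\beta)\setminus\{S\}$. Concretely, I would exhibit a distinguished tableau $S'\neq S$ and prove that every $T\neq S$ can be carried to $S'$ by a product of generators $\pi_{t,a}$ each of which fixes $S$.

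The first technical ingredient is to pin down which generators fix $S$. By the combinatorial description of the $\pi_{t,a}$ (coming from Lemmas~\ref{curvegrd} and~\ref{branch}), $\pi_{t,a}$ moves a tableau $U$ only when $t$ and $t+1$ occupy different rows and different columns of $U$ at distance exactly $a$. In $S$, two consecutive entries $t,t+1$ either sit one above the other in a single column --- and then no generator $\pi_{t,\ast}$ touches $S$ --- or else $t$ closes a column and $t+1$ opens the next, in which case they lie in different rows and columns at a distance $a^S_t$ completely determined by the shape $\sigma$. Thus only the finitely many generators $\pi_{t,a^S_t}$ of this second kind fail to fix $S$, and condition~(4) --- which forces all the interior rows of $\sigma$ to have one common length, at least that of the top row --- makes the positions of the column breaks of $S$, hence the numbers $a^S_t$, completely explicit.

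The heart of the argument is the relative sort. Given $T\neq S$, let $M$ be the smallest entry placed differently than in $S$; exactly as in Proposition~\ref{transitive}, $M-1$ is neither to the left of nor above $M$ in $T$, so $M-1$ and $M$ lie in different rows and columns, and with $a$ their distance in $T$ the move $\pi_{M-1,a}$ strictly lowers $T$ in lexicographic order. If $a\neq a^S_{M-1}$ (or $M-1,M$ are not at a column break of $S$), this move fixes $S$ and we iterate. The delicate ``collision'' case is $a=a^S_{M-1}$: here I would first apply a \emph{detour} --- a product of $S$-fixing generators that leaves every entry $<M$ of $T$ unchanged while relocating $M$ to another admissible cell, thereby changing the distance between $M-1$ and $M$ to some $a'\neq a^S_{M-1}$ --- and only then apply the now-harmless $\pi_{M-1,a'}$. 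This is exactly where conditions~(1)--(3) are used: they are ``largeness'' bounds on $g-d$ relative to the entries of $\alpha$ and $\beta$ (condition~(2) says the bottom row of $\sigma$ is long compared with the staircase variation $\sum_{i=0}^{r}(\alpha_i-\alpha_r)$ of $\alpha$, and conditions~(1) and~(3) give the analogous slack in the remaining rows), and they guarantee simultaneously that such an alternative cell for $M$ exists and that the detour generators are themselves not among the finitely many $\pi_{t,a^S_t}$. Iterating, $T$ is driven down toward $S$ and can be stopped only at the very last step, landing on a tableau $S'$ from which the only further lexicographic decrease is a forbidden move; a short additional argument (again using conditions~(1)--(4) to rule out multiple such near-$S$ local minima, or to exhibit an $S$-fixing move between any two of them) shows $S'$ is independent of $T$. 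Hence $G_S$ is transitive on $YT(g,r,d,\alpha,\beta)\setminus\{S\}$, i.e.\ $G$ is doubly transitive.

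The main obstacle is the collision analysis and, within it, the verification that a suitable $S$-fixing detour always exists below the final step. Outside the collision case the argument is the verbatim sorting of Proposition~\ref{transitive}; the real work is a case split on the location of $M-1$ and $M$ (inner versus outer corner of the skew shape $\sigma$; extremal rows $0$ or $r$ --- governed by the clause $\alpha_{r-1}+\beta_1\ge\alpha_r+\beta_0$ of condition~(4) --- versus interior rows; and proximity of $M$ to a column break of $S$), with conditions~(1), (2), (3) invoked in turn to produce the needed room in each subcase. I would also expect to treat small values of $g$ (and possibly very small $r$) separately, the sharper statement for $r=1$ being handled independently in Proposition~\ref{rank1case}.
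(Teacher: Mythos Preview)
Your reduction to a point stabilizer is fine, but stabilizing $S$ and then restricting to products of \emph{individually} $S$-fixing generators does not work. Consider the $2\times 3$ rectangle ($r=1$, $\alpha=\beta=(0,0)$, $g-d+r=3$), which satisfies conditions~(1)--(4). The only generators moving $S$ are $\pi_{2,2}$ and $\pi_{4,2}$. For $T_1=\left(\begin{smallmatrix}1&2&5\\3&4&6\end{smallmatrix}\right)$ the sole lex-decrease is the swap of $2$ and $3$, at distance~$2$---a collision with $\pi_{2,2}$---and your detour would have to relocate $M=3$, but $3$ sits in the lower-left corner with its only value-neighbour $4$ in the same row, so no $S$-fixing detour exists. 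Likewise $T_2=\left(\begin{smallmatrix}1&3&4\\2&5&6\end{smallmatrix}\right)$ is stuck at the pair $4,5$. The subgroup generated by all $S$-fixing generators is just $\langle\pi_{3,3}\rangle=\langle(T_3\;Z)\rangle$, with orbits $\{T_1\},\{T_2\},\{T_3,Z\}$ on $YT\setminus\{S\}$; no ``short additional argument'' can merge these, because there are literally no further $S$-fixing generators acting nontrivially. (The full $\mathrm{Stab}(S)$ \emph{is} transitive here, but only through words such as $\pi_{2,2}\pi_{3,3}\pi_{2,2}=(T_2\;Z)$ whose outer factors move $S$---precisely what you have excluded.) Deferring $r=1$ to Proposition~\ref{rank1case} is also circular, since that proposition invokes the present lemma.

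The paper makes the dual choice: it fixes the \emph{row}-filled tableau $Z$ and drives every $T\neq Z$ down to $S$. The lex-decreasing swap involves a pair $M,M+1$ with $M$ small, and in $Z$ small consecutive entries lie in the same row except at the $r$ row-breaks; collisions are therefore rare, and when one does occur (the paper's case $\beta_{i-1}+\alpha_{r-i}+g-d+r=L$) its location is rigidly pinned down by the shape---this is where conditions~(1)--(4) are actually used. Crucially, the paper does \emph{not} restrict to individually $Z$-fixing factors: in its Cases~4 and~5.1 it uses palindromic words $\pi_{M,L}\circ\pi_{M\pm1,L+1}\circ\pi_{M,L}$ in which $\pi_{M,L}$ moves $Z$ but the composite restores it. Both the choice of $Z$ over $S$ and the admission of $Z$-restoring (rather than merely $Z$-fixing) words are essential to closing the argument.
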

        \begin{proof}
        Similar to the proof of Proposition \ref{transitive}, 
        we assign a 
        lexicographic order to the set $YT(g,r,d,\alpha,\beta)$: 
        column-wise from left to right and 
        top to bottom in each column. 
        Let $S$ be the smallest tableau in this order.
        Let $Z$ be the Young tableau obtained by filling the boxes 
        row-wise from top to bottom and from left to right in each row. 
        By using induction on the size of
         the Young diagram, we can see that $Z$ is the 
         largest tableau in the lexicographic order. 
        
         To establish the doubly transitivity of the EH group action, it is 
         sufficient to prove that any Young tableau can be 
         transformed into $S$ by an EH group element, while 
         preserving the Young tableau $Z$. Actually, it is enough to prove that for
          any Young tableau $T\neq S$, there exists an EH group element $\pi$ 
          that moves $T$ to
          a tableau $T'$ smaller in the lexicographic order, 
          with $\pi$ fixing  the tableau $Z$. Since this process 
          is finite, it will eventually stop, with the final 
           tableau being $S$, while the tableau $Z$ remains unchanged.

Given a Young tableau $T \neq S$, let $M+1$ be the first integer in the 
lexicographic order that differs from $S$. Let $T_{i,j}$  represent the integer 
in the box located at the $i$-th (starting from $1$) row from top to bottom and
 the $j$-th (starting from $1$) column from 
left to right in tableau $T$. The integer $M$
 cannot appear to the left of or above the box containing $M+1$, 
  because at these positions, $T$ and $S$ are identical, and $M+1$ 
  is the first 
  different integer. Therefore, $M$ can only lies above and to the right of
   $M+1$. Let $M+1 = T_{a,b}$, $M= T_{c,d}$.     
         We denote their distance as
        \begin{equation*}
        L=(a-c)+(d-b).
        \end{equation*}
        
       We aim to exchange 
          $M$ with $M+1$ in $T$, while keeping $Z$ fixed, 
          so that $T$ becomes a Young tableau with a smaller lexicographic 
          order. If $M$ and $M+1$ are in the same row in $Z$, 
          then the group action $\pi_{M,L}$ is sufficient to 
          achieve this. 
          Therefore, we only need to consider the case where $M$ and $M+1$ are
           not in the same row. Let's assume  $M$ is at the end of the $i$-th
            row and $M+1$ is at the beginning of the $(i+1)$-th row in $Z$. 
            If the distance between $M$ and $M+1$ in $Z$ is not equal to $L$, 
            i.e.,
    \begin{equation*}
    \beta_{i-1}+\alpha_{r-i}+g-d+r \neq L,
    \end{equation*}
    then the group action $\pi_{M,L}$ can exchange $M$ with $M+1$ in $T$
     while fixing $Z$. Therefore, in the following proof, we always assume
    \begin{equation*}
    \beta_{i-1}+\alpha_{r-i}+g-d+r = L.
    \end{equation*}
    
    Now we consider the position of $M-1$ and let $M-1 = T_{e,f}$. 
    Then the possible 
    positions of $M-1$ are
    \begin{enumerate}
      \item $c<e<a,b<f<d$,
      \item $e<c,f>d$,
      \item $f\leq b$,
      \item $f=d$,
      \item $e=c$,
    \end{enumerate}
    as
     illustrated in the Figure \ref{mposition}.
    
    \begin{figure}[H]
    \centering
    \includegraphics[width=0.75\textwidth]{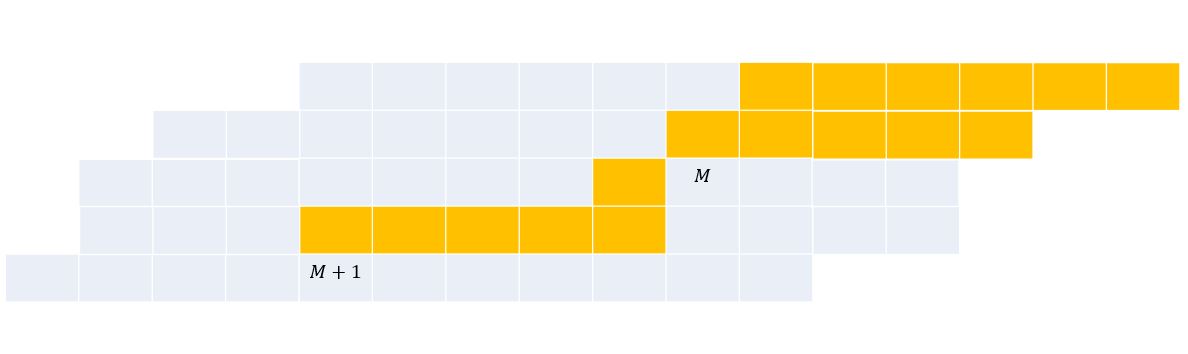}
    \caption{Possible positions of $M-1$}
    \label{mposition}
    \end{figure}
    
    \textbf{Case 1:} If $c<e<a,b<f<d$, we can use the group action 
    $\pi_{M-1,(e-c)+(d-f)}$ to exchange $M$ with $M-1$. Since $M-1$ is in 
    the same row as $M$ in $Z$, $\pi_{M-1,(e-c)+(d-f)}$ fixes $Z$. Then, we apply
     the group action $\pi_{M,(a-e)+(f-b)}$ to exchange $M$ with $M+1$ in $T$. 
    Given that  
    \begin{equation*}
    (a-e)+(f-b)<(a-c) + (d-b)=L,
    \end{equation*}
    the action $\pi_{M,(a-e)+(f-b)}$ fixes $Z$. Thus, the composition of these 
    two group actions exchange $M$ with $M+1$ in $T$ while keeping $Z$ unchanged.
    
    \textbf{Case 2:} If $e<c,f>d$, the proof follows a similar approach to that in Case 1. First we use the group action 
    $\pi_{M-1,(c-e)+(f-d)}$
     to exchange $M$ with $M-1$ while maintaining $Z$. Subsequently, we apply
      the group action $\pi_{M,(a-e)+(f-b)}$
      to exchange $M$ with $M+1$ in $T$. Since
    \begin{equation*}
    (a-e)+(f-b)>(a-c)+(d-b)=L,
    \end{equation*}
    the action $\pi_{M,(a-e)+(f-b)}$ fixes $Z$. Therefore, the composition
     of these two group actions achieves the goal.

    \textbf{Case 3:} If $f=b$, meaning $M-1$ and $M+1$ are in the same column, then $M-1$ 
    must be directly above $M+1$. If not, $M$ would also have
     to be positioned between  $M-1$ and $M+1$, contradicting our assumption.
     In this case, $M$ must be positioned in one of the upper-left corners of 
     the columns to the right of the column containing
     $M+1$. This is because the smallest integer in these columns is $M$. Additionally, 
      according to assumption (1), we have
    \begin{equation*}
    \begin{aligned}
    L &= (a-c)+(d-b)\\
    &\leq \alpha_0-\alpha_{r}+r\\
    &< \beta_{i-1}+\alpha_{r-i}+g-d+r\\
    &=L,
    \end{aligned}
    \end{equation*}
    which is a contradiction. Therefore, this case cannot occur.
    
    If $f<b$, indicating that $M-1$ is in one of the columns to the left
     of $M+1$, then
     $M$ must be in the first box of the column it belongs to. As in the
      previous case, $M$ must be in one of  the 
    upper-left corners of the columns to the right of the one 
    containing $M+1$.  Thus, by using a similar argument, 
    we arrive at a contradiction in this case as well.

    \textbf{Case 4:} If $f=d$, i.e., $M-1$ is above $M$, then 
    it must be adjacent to $M$; otherwise, there would be no integer 
    between $M-1$ and $M$ to fill in. 
    In this case, we can first exchange $M$ with $M+1$ in $T$
                  using $\pi_{M,L}$. Note that this exchange also
                   affects $M$ and $M+1$ in $Z$. 
                   Next, we exchange $M-1$ and $M$ in $\pi_{M,L}(T)$ 
                   using $\pi_{M-1,L+1}$. Since $M-1$ and $M$ in 
                   $\pi_{M,L}(Z)$ 
                   are separated by a distance of $L-1<L+1$, they remain
                    unchanged in $\pi_{M,L}(Z)$. Finally, we exchange 
                    $M$ with 
                     $M+1$ in $\pi_{M-1,L+1}(\pi_{M,L}(Z))$ using $\pi_{M,L}$,
                      resulting in
               \begin{equation*}
               \pi_{M,L}(\pi_{M-1,L+1}(\pi_{M,L}(Z)))=Z,
               \end{equation*}
               while $M+1$ and $M$ in $\pi_{M-1,L+1}(\pi_{M,L}(T))$ are in
                the same column and thus remain unchanged. 
                Therefore, $\pi_{M,L}\circ\pi_{M-1,L+1}\circ\pi_{M,L}$ fixes 
                $Z$ and 
                 reduces
                 $M+1$ to $M-1$ in $T$, hence
               \begin{equation*}
               \pi_{M,L}\circ\pi_{M-1,L+1}\circ\pi_{M,L}(T)<T.
               \end{equation*} 
               This process does not require the conditions.

    \textbf{Case 5:} 
    If $M-1$ is in the same row as $M$, then $M-1$ must be
    adjacent to $M$. Note that we have already addressed Case 3, 
    so we assume $f\neq b$.
In this case, we consider the position of $M-2$. 
    If $M-2$ is also in the same row and adjacent to $M-1$, we continue 
    this process with $M-3$, and so on, until we find $M-k$ such that
     $M-k, M-(k-1), \dots, M-1, M$ are
     consecutive in the same row,  but
     $M-(k+1)$ is
      not. Let $M-k=T_{c,g}$. Since $a>c$, 
      it must be that $g\geq b$.

    \textbf{Subcase 5.1: } If $g=b$, meaning that  $M-k$ is in the same 
    column as $M+1$, 
    then $M-k$ must 
    be adjacent to $M+1$, as the integers $M-(k-1),\cdots,M$ are
     not in this column.
     Therefore, $a=c+1$, which means $M+1$ and $M$ are in adjacent rows. 
     Additionally, since $M+1$ 
     is the first integer in the Young tableau $T$ that differs from $S$,
      there are no integers lying above $M-k,\cdots,M$.
     Hence, the condition (1) implies that $M$ must be in the first row. 
     According to the condition (3)
     \begin{equation*}
      \begin{aligned}
        \alpha_r+\beta_0-1<L,
      \end{aligned}
      \end{equation*}
     in the second row, there must be 
     exactly 
     $\alpha_{r-1} - \alpha_r$ boxes to the left of $M+1$
     boxes. The condition (2) implies that 
    \begin{equation*}
    \begin{aligned}
      M< \alpha_r+\alpha_{r-1}+2*(g-d+r)+\beta_0+\beta_1,
    \end{aligned}
    \end{equation*}
    hence $M$ can only be at the end of the first row in $Z$. This means that the first
    row of $T$ is $1, \cdots, M$, namely, the first row of $T$ is exactly the same as
    the first row of $Z$.

    We proceed by using induction on the number of rows $r+1$ to prove 
        that, when the first row of $T$ is the same as that of $Z$, $T$ can be moved to a lexicographically smaller 
        tableau $T'$ through an EH group action. If $r=1$, then the
         Young diagram has only two rows. The first row of $T$ is identical
          to that of $Z$, so there is only one way to fill the
           second row, which implies $T$ is the same as $Z$. However, 
          this contradicts the assumption that $T\neq Z$. Therefore, this
           case cannot occur when $r=1$. 
           
           If $r=s>1$, 
           by the induction hypothesis, for Young diagrams with $s$ 
           rows satisfying the conditions, the action of the EH 
           group on all of its Young tableaux is doubly transitive. 
           let $\tilde{\sigma}$ 
            be the sub-Young diagram of $\sigma(g,r,d,\alpha,\beta)$ 
             obtained by
            removing the first row. Similarly, let $\tilde{T}$ and $\tilde{Z}$ be
             the Young tableaux obtained by removing the first row from
              $T$ and $Z$ respectively. Then the
               $\tilde{\sigma}$ has $s$ rows and  $\tilde{T}$, $\tilde{Z}$ are 
                Young tableaux 
                of $\tilde{\sigma}$ satisfying the conditions.
          
               We can identify actions $\pi_{t,a}$  
               on the set of Young tableaux of $\tilde{\sigma}$ with
                actions $\pi_{t+M,a}$
               on the tableaux of $\sigma$ that have the same first row 
               as 
               $Z$, since  actions $\pi_{t+M,a}$ do not change the integers
               $1,\cdots,M$ in the 
               first row.
                If $\tilde{T}$ is not the lexicographically smallest
                Young tableau of diagram $\tilde{\sigma}$,
                 then by the induction hypothesis, 
                 there exists an EH group action $\pi_{t,a}$ such that
                  $\pi_{t,a}(\tilde{T})<\tilde{T}$ in the 
                  lexicographic order and
                   consequently 
                  $\pi_{t+M,a}(T)<T$. 
                  If $\tilde{T}$ is the lexicographically smallest Young 
                  tableau of $\tilde{\sigma}$, then since $s>1$,  
                  $M+2$ must be in the same column as $M+1$ and directly below $M+1$.
               Now, we first exchange $M$ with $M+1$ in $T$ using $\pi_{M,L}$, 
               which simultaneously changes $Z$. 
               Next, we   apply $\pi_{M+1,L+1}$ to exchange $M+1$ and $M+2$ 
               in $\pi_{M,L}(T)$. Since the distance between
               $M+2$ and $M+1$ in $\pi_{M,L}(Z)$ is $M-1<M+1$, 
               no changes occur in $\pi_{M,L}(Z)$. 
               Finally, we exchange $M$ with $M+1$ in 
               $\pi_{M+1,L+1}(\pi_{M,L}(Z))$ using $\pi_{M,L}$, 
               resulting in
               \begin{equation*}
               \pi_{M,L}(\pi_{M+1,L+1}(\pi_{M,L}(Z)))=Z.
               \end{equation*}
               Because $M+1$ and $M$ in $\pi_{M+1,L+1}(\pi_{M,L}(T))$ are
                in the same column, they remain unchanged. 
                Thus, $\pi_{M,L}\circ\pi_{M+1,L+1}\circ\pi_{M,L}$ fixes $Z$ and reduces
                 $M+1$ to $M$ in $T$, hence
               \begin{equation*}
               \pi_{M,L}\circ\pi_{M+1,L+1}\circ\pi_{M,L}(T)<T.
               \end{equation*}
               Therefore, the claim is proved according to 
               the induction hypothesis. Also 
               note that this induction depends on proving
                other cases of the lemma, but other cases do not 
               need induction, thus avoiding circular reasoning.
    
    \textbf{Subcase 5.2: } If $g>b$, that is
     $M-k$ and $M+1$ are in different columns, then 
    we consider 
    the position of $M-(k+1)$. 

    \textbf{Subcase 5.2.1: } If $M-(k+1)=0$, then $M-k=1,\cdots, M=k+1$, 
    hence there is no integer above $M$. If $M$
    is not in the first row, then by assumption (1), 
    \begin{equation*}
      \begin{aligned}
      L=(a-c)+(d-b) &\leq \alpha_0-\alpha_r+r\\
      &< \alpha_{r-i+1}+g-d+r+\beta_{i-1}\\
      &= L,
      \end{aligned}
    \end{equation*}
    which is a contradiction. Therefore, $1,\cdots, k+1=M$ is the first row of 
    $T$.
    This means that the first row of $T$ is exactly the same as
          the first row of $Z$. We have already addressed such case in Case 5.1,
           so we are done.

    \textbf{Subcase 5.2.2: } If $M-(k+1)\geq 1$, Let
    $M-(k+1)=T_{u,v}$. Similarly, the possible positions
     for $M-(k+1)$ are 
   \begin{enumerate}
     \item $c<u<a,b<v<g,$
     \item $u<c,v>d,$
     \item $v=d,$
     \item $v=b$.
   \end{enumerate}
   We can sequentially exchange adjacent pairs of $M-(k+1),M-k,\cdots, M-1$  
               to move $M-1$ to the original position
               of $M-(k+1)$ in $T$. This reduces to Cases
                1, 2, or 3, which have been proved earlier. 
                Thus, through a composition of several group actions, 
     we can exchange $M$ with $M+1$ in the tableau $T$ to 
     obtain a Young tableau with a smaller lexicographic order, 
                However, 
                we need to check that $Z$ remains
                 unchanged after these actions. 
    
  If $i=1$, i.e., $M$ is at the end of the first row in $Z$, then 
  since $M-(k+1)>0$, the numbers
  $M-(k+1)$, $M-k,\cdots,M-1$ are 
  also in the first row in $Z$.  Hence the actions leave $Z$ unchanged. 
  If $1<i<r$, the length of the $i$-th row is at 
  least as long as the row containing $M$ in $T$. Consequently,
   $M-(k+1)$, $M-k,\cdots,M$ all lie in the 
  same row of $Z$, keeping $Z$ unchanged. Since $M$ and $M+1$ are in 
  different rows, it follows that  $i\neq r$. 
  Therefore, we have completed the proof of Case 5.

    Combining all the cases above, we are done.
         
        \end{proof}
    
    \begin{example}
      \begin{enumerate}
        \item If $r=1$, then the Young diagram $\sigma(g,r,d,\alpha,\beta)$
        consists of $2$ rows. In this case, all the conditions in Lemma \ref{generaltransitive}
        become vacuous. Therefore, for any ramification sequences $\alpha,\beta$,
         the EH group is doubly transitive.
        \item If $\alpha=\beta=(0,0,\cdots,0)$, which corresponds to  the unramified case, then 
        the conditions of Lemma \ref{generaltransitive} simplify to 
        $r+1<g-d+r$ , so this lemma serves as 
        a generalization of Edidin's theorem from \cite{Ed91}.
        \item This lemma also leads to several interesting cases. For instance, if $\alpha
        =\beta=(1,\cdots,1,0)$, which are known as \textit{cusps}, 
        then the conditions of Lemma 
        \ref{generaltransitive} simplify to $r < g -d+r$, so
         the EH group acts doubly transitively
        in this case.

        \begin{figure}[H]
          \centering
          \includegraphics[width=0.4\textwidth]{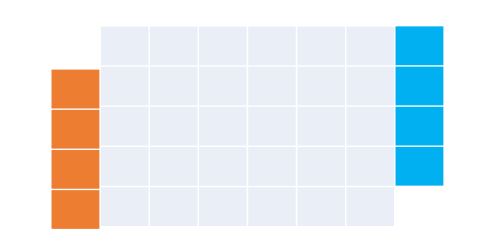}
          \label{cusps}
        \end{figure}

        \item If $\alpha=(\alpha_0,\alpha_1,\alpha_2=0)$, $\beta=(0,0,0)$, then 
        the conditions of Lemma \ref{generaltransitive} simplify to the following: 
        \begin{enumerate}
          \item $\max\{\alpha_0-\alpha_1,1\} < \alpha_1 + g - d$,
          \item $\alpha_0+\alpha_1<g-d+2$.
        \end{enumerate} 
        For instance, when $(g,r,d)=(16,2,14)$ and $\alpha=(3,1,0)$, 
        the EH group acts doubly transitively.
        \begin{figure}[H]
          \centering
          \includegraphics[width=0.4\textwidth]{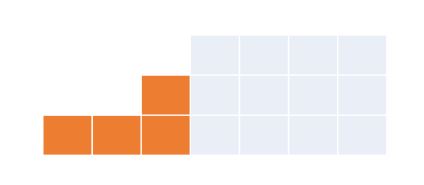}
          \label{ramf}
        \end{figure}
      \end{enumerate}
    \end{example}

        \begin{remark}
        The section 3 of \cite{Ed91}  addressed the unramified case, 
        but the proof has some gaps
        : the case (4a) in \cite{Ed91} does not consider when $T\in Y(m,n)$ is the smallest 
        element, and the case (3) does not consider when 
         $M-c$ is above $M$. Here, we provide a complete proof and
          simplify the arguments for case (5) in \cite{Ed91} .
        \end{remark}    
        
        Using the doubly transitivity of the EH group action, 
        we can establish that
        the EH group is actually a very
         large subgroup. Under certain conditions, 
         it is either the alternating group or the full symmetric group. 
         To begin, let us revisit a classical theorem in group theory.
    
        \begin{theorem}[Bochert]
        Let $G$ be a doubly transitive subgroup of the symmetric 
        group $S_n$. If there exists an element $g \neq 1$ in $G$ such that
         the number of elements moved by $g$ is fewer than $\frac{n}{3}-2\frac{\sqrt{n}}{3}$, then $G$ is either the symmetric group $S_n$ or the alternating group $A_n$.
        \end{theorem}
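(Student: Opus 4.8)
This is a classical result in the theory of finite permutation groups, originally due to Bochert, so in this paper it may legitimately be quoted as a black box (see Wielandt, \emph{Finite Permutation Groups}, or Dixon--Mortimer, \emph{Permutation Groups}); none of its proof touches the geometry of limit linear series. For a self-contained treatment the plan is as follows. Let $G\le S_n$ be $2$-transitive, let $m$ be the minimal degree of $G$ (the smallest size of $\mathrm{supp}(h)$ over $h\in G\setminus\{1\}$), and fix $g\ne 1$ with $|\mathrm{supp}(g)|=m$; set $\Delta=\mathrm{supp}(g)$. First establish the counting identity
\[
\sum_{x\in G}|\Delta\cap\Delta^{x}| \;=\; \frac{m^{2}}{n}\,|G|,
\]
which follows from transitivity alone, since for each ordered pair $\alpha,\beta\in\Delta$ there are exactly $|G|/n$ elements $x$ with $\beta^{x}=\alpha$. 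Hence there is a conjugate $g^{x}$ with $\Delta\ne\Delta^{x}$ but $|\Delta\cap\Delta^{x}|$ of size on the order of $m^{2}/n$, and $2$-transitivity lets one prescribe the overlap more precisely.

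The heart of the matter is a \emph{descent}. From $g$ and a carefully chosen translate $g^{x}$ one builds a non-identity element whose support is strictly smaller than $m$ — typically the commutator $[g,g^{x}]$, or a product $g^{-i}(g^{x})^{j}$ that agrees with the identity on a large portion of $\Delta\cup\Delta^{x}$ — and bounds its support by inclusion--exclusion together with the overlap estimate. Existence of such an element contradicts the minimality of $m$. Otherwise the supports of the $G$-translates of $g$ are arranged so rigidly (pairwise equal, pairwise disjoint, and the like) that one is pushed into a structural alternative: by Burnside's dichotomy for normal subgroups of $2$-transitive groups, the normal subgroup $N=\langle g^{G}\rangle$ of $G$ is either regular — the affine case $n=p^{k}$, handled by hand using that socle elements move all $n$ points — or itself $2$-transitive, hence primitive and containing a non-identity element of support below $\tfrac{n}{3}-\tfrac{2\sqrt n}{3}$ whose cycle type meets the hypotheses of Jordan's theorem; Jordan's theorem then yields $A_n\le N\le G$, so $G=A_n$ or $S_n$. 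Finally one checks that the two exceptions are genuinely unavoidable: $A_n$ contains $3$-cycles and $S_n$ contains transpositions, of degree $3$ and $2$, far below the threshold for large $n$.

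The only real work is the descent step, and that is exactly where the constant is produced: one needs the overlap $|\Delta\cap\Delta^{x}|$ to be small enough — on the order of $m^{2}/n$, which is precisely what creates the $\sqrt n$ correction — that the manufactured element provably moves fewer than $m$ points, and one must dispose of the degenerate translate configurations and the affine case by separate elementary estimates. I expect this support-size bookkeeping, not any conceptual subtlety, to be the main obstacle; the counting identity, Burnside's dichotomy, Jordan's theorem, and the verification of the exceptions are all standard and may simply be cited.
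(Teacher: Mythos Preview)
Your instinct is exactly right: the paper does not prove this theorem at all. It states Bochert's theorem as a classical black box and immediately moves on to applying it, so there is no ``paper's own proof'' to compare against. Your opening sentence --- that this result may legitimately be quoted --- already matches what the paper does; the sketch you give afterward is extra, not required.

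One small comment on that sketch, purely for your own edification: the argument you outline is closer in spirit to the Jordan--Wielandt circle of ideas (bounding minimal degree via support overlaps and commutators, then invoking Jordan's theorem on primitive groups containing short cycles) than to Bochert's original 1889 proof, which proceeds via a direct combinatorial analysis of the index $[S_n:G]$ and Bertrand's postulate. Either route leads to a bound of the stated shape, and modern references (Wielandt, Dixon--Mortimer) tend to present variants of the approach you describe. But since the paper treats the result as a citation, none of this is needed here.
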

        
        This theorem  tells us that we need to find 
        a nontrivial element that moves
         only a sufficiently small number of elements.
        \begin{lemma}\label{movefew}
        If $r \geq 2$ and $\alpha_{r}+\beta_{r}+g-d+r > r+1$, then 
        there is an element of the EH group that 
         moves at most $\frac{1}{4}$ of the Young tableaux.
        \end{lemma}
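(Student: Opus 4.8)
The goal is to exhibit a single non-identity element $\pi$ of the EH group whose support --- the set of Young tableaux of $\sigma=\sigma(g,r,d,\alpha,\beta)$ that $\pi$ does not fix --- has size at most $N/4$; this is the input needed to apply Bochert's theorem in the sequel. By the generation statement for the EH group we may take $\pi$ to be a single generator $\pi_{t,a}$, and recall that $\pi_{t,a}$ fixes a tableau unless the boxes carrying $t$ and $t+1$ lie in different rows and columns at distance exactly $a$. Put $W=g-d+r$; with the paper's conventions the rows of $\sigma$, numbered from the top, have lengths $\ell_m=W+\alpha_{r+1-m}+\beta_{m-1}$, so each row has length at least $W+\alpha_r+\beta_r$, and the hypothesis $\alpha_r+\beta_r+W>r+1$ says precisely that every one of the $r+1$ rows of $\sigma$ is strictly longer than the number of rows (equivalently, the central $(r+1)\times(W+\alpha_r+\beta_r)$ rectangle is contained in $\sigma$). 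I would take $t=\ell_1$ and $a$ a maximal distance two consecutive entries can attain once the first $\ell_1$ entries have been placed --- for instance the distance $W+\alpha_{r-1}+\beta_0$ from the last box of row $1$ to the first box of row $2$ --- and set $\pi=\pi_{\ell_1,a}$. (The $180^{\circ}$ rotation of $\sigma$ interchanges the roles of $\alpha$ and $\beta$, so one may run the same argument with the two bottom rows instead, using whichever side makes the count below smaller.)

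Non-triviality of $\pi$ is immediate: filling row $1$ with $1,\dots,\ell_1$ is always legal, a box realizing the distance $a$ is then addable for the entry $\ell_1+1$, and any completion $T_0$ obtained this way has $\ell_1$ and $\ell_1+1$ in different rows and in different columns --- the latter because $a\ge W+\alpha_r+\beta_r>r+1\ge 1$, which is the one place the displayed inequality is needed merely to produce a motion. Hence $\pi(T_0)\ne T_0$.

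The estimate $\lvert\operatorname{supp}(\pi)\rvert\le N/4$ is the heart of the argument and the main obstacle. I would stratify the moved tableaux by the pair of boxes $(P,Q)$ carrying $\ell_1$ and $\ell_1+1$: $P$ must be addable to the $(\ell_1-1)$-box region occupied by $1,\dots,\ell_1-1$, $Q$ addable to that region together with $P$, and $(P,Q)$ must lie at distance $a$ in different rows and columns. Since $a$ is maximal, the region occupied by the first $\ell_1$ entries is forced to be extremal --- essentially of hook type --- so $(P,Q)$ ranges only over a short explicit list (the "end of row $1$ / start of row $2$" pair, one or two staircase-corner pairs, plus a handful more that occur only when $\alpha$ or $\beta$ repeats a value near an end). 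For each admissible $(P,Q)$ the number of tableaux realizing it equals $\sum_{\tau} f^{\tau}\,f^{\sigma/(\tau\cup\{P,Q\})}$, the sum over the sub-shapes $\tau$ of size $\ell_1-1$ compatible with $(P,Q)$, and one must bound the grand total, summed over all admissible $(P,Q)$, by $\tfrac14 f^{\sigma}=\tfrac14 N$. This is where the "wide shape" hypothesis, together with $r\ge 2$, is used: via the hook length formula (or the Aitken determinant for the skew shapes $\sigma/\tau$ and $\sigma/(\tau\cup\{P,Q\})$), deleting the wide northwest rows and columns forces the hook lengths governing these sub-counts to be small, and this is what produces the factor-$4$ gain against $f^{\sigma}$. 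The bound is close to tight --- in the unramified case $(g,r,d)=(12,2,10)$, where $\sigma$ is the $3\times 4$ rectangle and $N=462$, one computes that $\pi_{4,4}$ moves exactly $112<N/4$ tableaux --- and, as that example already shows, the gain is spread unevenly across the strata, so the count must be organized globally rather than stratum by stratum. This bookkeeping, rather than any conceptual point, is the difficulty, and neither $\alpha_r+\beta_r+W>r+1$ nor $r\ge 2$ can be dropped.
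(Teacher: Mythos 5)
Your proposal does not prove the lemma: the entire content of the statement is the quantitative bound that the chosen element moves at most $\tfrac14$ of the tableaux, and on this point you only describe a plan --- stratify the moved tableaux by the positions of $\ell_1$ and $\ell_1+1$, invoke hook-length/Aitken formulas for the resulting skew counts, and assert that ``the wide shape hypothesis produces the factor-$4$ gain'' --- while explicitly deferring the bookkeeping. That deferred bookkeeping is the lemma. Worse, the plan rests on an unjustified rigidity claim: you say that because $a$ is ``maximal'' the region occupied by the first $\ell_1$ entries is forced to be essentially a hook, so that the admissible pairs $(P,Q)$ form a short list. But your $a=W+\alpha_{r-1}+\beta_0$ (end of row $1$ to start of row $2$) is not the maximal distance in $\sigma$ --- that is the corner-to-corner distance $L=\alpha_0+\beta_0+g-d+2r-1$ --- and in general many pairs of boxes realize distance $a$, so neither the pair $(P,Q)$ nor the shape filled by $1,\dots,\ell_1-1$ is strongly constrained. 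You also use the hypothesis $\alpha_r+\beta_r+g-d+r>r+1$ only to ``produce a motion,'' which misses its real role.

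The paper's proof chooses the element precisely so that no counting formulas are needed. It takes $\pi_{s+t-2,L}$ (or $\pi_{s+t-1,L}$ when $\alpha$ is not constant), where $s,t$ are the first-column and first-row lengths and $L$ is the corner-to-corner distance; the hypothesis $\alpha_r+\beta_r+g-d+r>r+1$ guarantees that $L$ is realized \emph{only} by the lower-left and upper-right corners, which forces the first row and column of any moved tableau to carry exactly $1,\dots,s+t-1$ and pins down the positions of $s+t$ and $s+t+1$. From each moved tableau $T$ it then constructs three companion tableaux $T_1,T_2,T_3$ by explicit small swaps, checks each is fixed by the element, and verifies that all companions of all moved tableaux are pairwise distinct; the four disjoint, equinumerous families $A,A_1,A_2,A_3$ immediately give $|A|\le N/4$. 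To salvage your route you would have to carry out the skew hook-length estimates uniformly in $g,r,d,\alpha,\beta$, which is a genuinely harder task than this injection argument; as written, the proposal has a gap exactly where the lemma's difficulty lies.
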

        \begin{proof}
        Let $s$ be the length of the first column of $\sigma$ and
        $t=\alpha_{r}+\beta_0+g-d+r$ be the length of the first row. 
        Additionally,  let $L=\alpha_0+\beta_0+g-d+2r-1$ be the distance
         from the lower-left corner to the upper-right corner of the 
         Young diagram $\sigma=\sigma(g,r,d,\alpha,\beta)$.
        
        If the ramification sequence $\alpha$ satisfies
        \begin{equation*}
        \alpha_0=\alpha_1=\cdots=\alpha_{r},
        \end{equation*}
        we consider the EH group element $\pi_{s+t-2,L}$. 
       Let $A$ be the set of all Young tableaux  moved 
       by $\pi_{s+t-2,L}$, i.e., those not 
       fixed by $\pi_{s+t-2,L}$.
        Let $T$ be a Young tableau in $A$. 
        Since $L$ is the farthest distance
         between two boxes in $\sigma$ and 
        \begin{equation*}
        \alpha_{r}+\beta_{r}+g-d+r> r+1,
        \end{equation*}
        $L$ can only be the distance between the
         lower-left corner $T_{s-1,0}$ and the upper-right corner $T_{0,t-1}$
         of $\sigma$. 
         Thus, the integers in the lower-left box or
          the upper-right box  must be $s+t-2,\, s+t-1$. 
          Since the total number of boxes in the first column and row 
          is $s+t-1$ and the integers filled in the boxes except 
          the lower-left and upper-right boxes must be less than $s+t-2$,
           the numbers filled  in the first row and column 
           are exactly $1,2,\cdots,s+t-1$. 
           Now, consider $s+t$ and $s+t+1$. Since $s+t$ is the smallest 
           integer after removing the first row and first column, 
           we have $T_{1,1}=s+t$. 
           Similarly, $s+t+1$ is the second smallest
            integer after removing the first row and first column, 
            so either $T_{1,2}=s+t+1$ or $T_{2,1}=s+t+1$.
        
        We construct three Young tableaux $T_1,T_2,T_3$ from $T$. The tableau $T_1$ 
        is obtained by exchanging  $s+t-1$ with $s+t$, $T_2$ by
         exchanging $s+t-2$ with $s+t$, and $T_3$ by first exchanging 
          $s+t-1$ with $s+t$, and then $s+t-2$ with $s+t+1$. 
          Due to the  special positions
           of $s+t$ and $s+t+1$, the resulting 
         tableaux are still Young tableaux. Since each of 
         $T_1,T_2,T_3$ 
         has at least one of  $s+t-1$ and $s+t-2$ not in the lower-left corner or 
         the upper-right corner,
         they will all be fixed by $\pi_{s+t-2,L}$. 
         
         For any $S\in A$ with $S\neq T$,
          we can similarly construct the corresponding three tableaux $S_1,S_2,S_3$.
          Next, we show 
          that $S_1,S_2,S_3$ and $T_1,T_2,T_3$ are  distinct Young tableaux. 
          By construction, $S_1$ is different from $S_2$ and $S_3$. Moreover, 
          since the lower-left corner and the upper-right corner 
           of $S_1$ are $s+t,s+t-2$, and those of $T_2,T_3$ are $s+t,s+t-1$ and
           $s+t,s+t+2$ respectively, $S_1$ is different from $T_2,T_3$.
            The tableaux $S_1$ and $T_1$ are obtained by the actions 
            $\pi_{s+t-1,a}$ and $\pi_{s+t-1,a'}$ on $T$ and $S$, respectively.
            If $a\neq a'$, then the distances between $s+t-1$ and $s+t$ are 
            in $T_1$ and $S_1$ are different, so $T_1\neq S_1$. If $a=a'$,
            then $\pi_{s+t-1,a}=\pi_{s+t-1,a'}$. Since $T\neq S$, we have
            \begin{equation*}
              S_1 = \pi_{s + t - 1, a'}(S) \neq \pi_{s + t - 1, a}(T) = T_1.
              \end{equation*}  
              Therefore, $S_1$ is different from $S_2,S_3,T_1,T_2,T_3$.
             Similarly, by symmetry, $S_2$ is distinct from $S_1,S_3,T_1,T_2,T_3$ and 
             $S_3$ is distinct from $S_1,S_2,T_1,T_2$ as well.
              $S_3$ and $T_3$ 
             are obtained from $S_1$ and $T_1$ respectively 
             by exchanging $s+t-2$ with $s+t+1$. If the positions of 
              $s+t-2,s+t+1$ 
              $S_3$ and $T_3$ are different, then 
              $S_3\neq T_3$. If the positions of $s+t-2,s+t+1$ 
              in $S_3$ and $T_3$ are the same, then  the positions of
               $s+t-2$ and $s+t+1$ would be
               completely identical, contradicting the distinctness
                of $S_1$ and $T_1$. 
                Hence, $S_3$ and $T_3$ must be distinct. 
                Therefore, $S_1,S_2,S_3,T_1,T_2,T_3$ are mutually
                 distinct Young tableaux.
        
        By constructing $T_1$ for each $T$ in $A$, we obtain a new set $A_1$
        of Young tableaux. Similarly, sets $A_2,A_3$ can be obtained. 
        As argued earlier, the sets $A,A_1,A_2,A_3$ have equal cardinality and 
        are pairwise disjoint. Therefore, the number of Young tableaux 
         moved by $\pi_{s+t-2,L}$ is at most $\frac{1}{4}$ of the total number of
          Young tableaux.
        
        If the ramification sequence $\alpha$ does not satisfy
        \begin{equation*}
        \alpha_0=\alpha_1=\cdots=\alpha_{r},
        \end{equation*}
        we then consider the EH group element $\pi_{s+t-1,L}$. 
        The proof follows a similar approach to the previous case.
         Let  $A$ be the set of
        all Young
         tableaux moved by $\pi_{s+t-1,L}$. For any $T\in A$,
          the 
         lower-left corner and the upper-right corner of $T$ are $s+t-1$ 
         and $s+t$. We denote the Young diagram obtained from removing the 
         first row and first column of $\sigma$ as $\tilde{\sigma}$.
         If $\tilde{\sigma}$ has only one 
        upper-left corner, it must be  in the first row
         of $\tilde{\sigma}$.  Given that the smallest integer in 
         $\tilde{\sigma}$ is $s+t+1$, the 
        upper-left corner of $\tilde{\sigma}$ must be $s+t+1$, and $s+t+2$
         must be adjacent to $s+t+1$ either to the right or below.
          Since the 
        upper-left corner is in the first row of $\tilde{\sigma}$, 
        $s+t+2$ cannot be in the same row as $s+t-1$ or $s+t$.
        
        For $T$ we construct $T_1,T_2,T_3$ as follows: 
        $T_1$ is obtained from $T$ by exchanging $s+t-1$ with $s+t+1$,
         $T_2$ by exchanging $s+t$ with $s+t+1$, and $T_3$ 
         by first exchanging $s+t$ with $s+t+1$ and then $s+t-1$ with $s+t+2$.
          The resulting tableaux are still Young tableaux. 
          By arguments analogous to those used previously, 
          $S_1,S_2,S_3$ and $T_1,T_2,T_3$ are pairwise distinct Young tableaux.
           Therefore, similarly, we can conclude that the  number of Young tableaux moved by
            $\pi_{s+t-1,L}$ is at
            most $\frac{1}{4}$ of all Young tableaux.
            
          If $\tilde{\sigma}$ has more than one upper-left corner, 
          we choose any two of them, denoted as $T_{i,j}$ and $T_{p,q}$.
           For any $T \in A$, 
            we construct corresponding tableaux $T_1, T_2, T_3$ as follows: 
            $T_1$ is obtained from exchanging $T_{s-1,0}$ with $T_{i,j}$,
            $T_2$ by exchanging $T_{s-1,0}$ with $T_{p,q}$,
            $T_3$ by exchanging $T_{0,t-1}$ with $T_{i,j}$.
            Since both $T_{i,j}$ and $T_{p,q}$
  are upper-left corners,
   the resulting tableaux remain Young tableaux after these exchanges.
   Moreover, the tableaux $T_1,T_2,T_3$ are fixed by the EH group element
   $\pi_{s+t-1,L}$. Finally, using similar reasoning, 
   we conclude that the number of Young tableaux moved by $\pi_{s+t-1,L}$
   is at most $\frac{1}{4}$ of all Young tableaux.

        \end{proof}

        \begin{theorem}\label{alter}
            Given two ramification sequences $\alpha,\beta$. Suppose
            \begin{equation*}
            \rho(g,r,d,\alpha,\beta)=g-(r+1)(g-d+r)-|\alpha|-|\beta|=0
            \end{equation*}
            and $r\geq 2$, $\alpha_{r}+\beta_r+g-d+r>r+1$. Let $N=N(g,r,d,\alpha,\beta)$
            denote the number of Young tableaux of the Young diagram $\sigma=\sigma(g,r,d,\alpha,\beta)$.
            If $\alpha,\beta$ satisfy the conditions of 
            Lemma \ref{generaltransitive},
            then the EH group is either the alternating group $A_N$ or the symmetric group $S_N$.
            \end{theorem}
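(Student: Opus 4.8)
The plan is to recognize the two numerical hypotheses of the theorem as invocations of the two preceding lemmas and then finish with Bochert's theorem. First I would set up the permutation action: by Theorem~\ref{ytcorrsp} the limit linear series on $C_\infty$ with ramification at least $\alpha$ at $p_1$ and at least $\beta$ at $p_{g+2}$ are in bijection with the set $YT(g,r,d,\alpha,\beta)$ of Young tableaux of shape $\sigma$, which has cardinality $N=N(g,r,d,\alpha,\beta)$, and the EH group $G$ acts on these limit linear series, hence faithfully on this set, so $G\hookrightarrow S_N$. The assumption that $\alpha,\beta$ satisfy the conditions of Lemma~\ref{generaltransitive} says exactly that the action of $G$ on $YT(g,r,d,\alpha,\beta)$ is doubly transitive. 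The assumptions $r\ge 2$ and $\alpha_r+\beta_r+g-d+r>r+1$ are precisely the hypotheses of Lemma~\ref{movefew}, which therefore furnishes a nonidentity element $g\in G$ whose support (the set of tableaux it moves) has size at most $N/4$.

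With a doubly transitive group and a small-support element in hand, I would invoke Bochert's theorem: if a doubly transitive subgroup of $S_N$ contains a nonidentity element moving fewer than $\tfrac{N}{3}-\tfrac{2\sqrt N}{3}=\tfrac{N-2\sqrt N}{3}$ points, then that subgroup is $A_N$ or $S_N$. Applying it to $g$, whose support has size $\le N/4$, reduces the theorem to the elementary inequality $\tfrac N4<\tfrac{N-2\sqrt N}{3}$, which after clearing denominators becomes $8\sqrt N<N$, i.e. $N>64$. Thus for every shape $\sigma=\sigma(g,r,d,\alpha,\beta)$ with $N>64$ the conclusion is immediate.

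The remaining point, which I expect to be the only genuine obstacle, is to dispose of the shapes $\sigma=\sigma(g,r,d,\alpha,\beta)$ that satisfy all the hypotheses but have $N\le 64$. Here $r\ge 2$ forces $\sigma$ to have at least three rows while $\rho=0$ fixes its number of boxes at $g$, and combining this with $\alpha_r+\beta_r+g-d+r>r+1$ should restrict such $\sigma$ to a short explicit list, since the number of standard tableaux of a thin enough shape still grows with its size. For each such $\sigma$ one should either verify directly, from the description of the generators $\pi_{t,a}$, that the doubly transitive group they generate is already $A_N$ or $S_N$, or show that conditions (1)--(4) of Lemma~\ref{generaltransitive} together with $\alpha_r+\beta_r+g-d+r>r+1$ are incompatible with $N\le 64$. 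A route that would avoid this bookkeeping entirely is to exhibit a $3$-cycle or a transposition in $G$ and appeal to Jordan's theorem on primitive groups, but producing one directly from the transposition-product generators $\pi_{t,a}$ looks difficult, so I would fall back on the small-$N$ case analysis.
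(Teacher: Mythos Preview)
Your overall architecture is exactly the paper's: invoke Lemma~\ref{generaltransitive} for double transitivity, Lemma~\ref{movefew} for an element with support of size at most $N/4$, and finish with Bochert. The only place where you diverge is the ``remaining point'': you propose a case analysis for the finitely many shapes with $N\le 64$. The paper avoids this entirely by showing that the hypotheses already force $N\ge 231$, so no small-$N$ case ever arises.

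The argument you are missing is short. The skew shape $\sigma=\sigma(g,r,d,\alpha,\beta)$ contains a maximal full rectangle $\tau$ of size $(r+1)\times(\alpha_r+\beta_r+g-d+r)$, with two skew ``wings'' $\sigma_1,\sigma_2$ to its left and right. The hypotheses $r\ge 2$ and $\alpha_r+\beta_r+g-d+r>r+1$ say exactly that $\tau$ is at least $3\times 4$. If $\sigma_1$ has $m$ boxes and $\sigma_2$ has $n$ boxes, then filling $1,\dots,m$ into $\sigma_1$ in the lexicographic way and $g-n+1,\dots,g$ into $\sigma_2$ likewise, every standard filling of $\tau$ by $m+1,\dots,g-n$ extends to a standard Young tableau of $\sigma$. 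Hence
\[
N=N(g,r,d,\alpha,\beta)\ \ge\ N\bigl(r+1,\ \alpha_r+\beta_r+g-d+r\bigr)\ \ge\ N(3,4)=231,
\]
the last number coming from the hook-length formula. Since $231>64$, your inequality $N/4<(N-2\sqrt{N})/3$ holds automatically, and Bochert applies without any residual cases. So rather than attempting a list of small shapes or searching for a $3$-cycle, simply insert this rectangle comparison and the proof is complete.
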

            \begin{proof}
            If the ramification sequences $\alpha,\beta$ satisfy the conditions
             of Lemma \ref{generaltransitive},
            then the action of the EH group on the set of all Young tableaux $YT(g,r,d,\alpha,\beta)$ is 
            doubly transitive.
            Next, we demonstrate that the EH group also satisfies the 
            condition of Bochert's theorem.

            The Young diagram $\sigma$ contains
            a maximal rectangle-shaped sub-Young diagram $\tau$ of
             size $(r+1)\times(\alpha_{r}+\beta_{r}+g-d+r)$. Its left
             and right sides are also  sub-skew Young diagrams, which we denote 
             by $\sigma_1$ and $\sigma_2$, respectively.
            Since $r\geq 2$,
            $\sigma$ has at least $3$ rows. Furthermore, 
            \begin{equation*}
            \alpha_{r}+\beta_{r}+g-d+r>r+1,
            \end{equation*}
            ensuring that 
             the size of the rectangle-shaped Young diagram $\tau$
            is at least $3\times 4$. 
            Let $m$ be the number of boxes in $\sigma_1$ and 
            $n$ be the number of boxes in $\sigma_2$.
            We can fill $1,2,\cdots,m$ lexicographically
             into $\sigma_1$ and $g-n+1,\cdots,g$ into $\sigma_2$.
            By filling $m+1,\cdots,g-n$ into the middle
             rectangle $\tau$ such that  $\tau$ forms a Young tableau,
            this filling yields a Young tableau of 
            the entire Young diagram $\sigma$. Conversely, if $m+1,\cdots,g-n$
            are filled in such a way that $\sigma$ forms a Young tableau, 
            then $\tau$  becomes a Young tableau as well.
            Hence the number of Young tableaux of the (skew)
             Young diagram $\sigma$ is at least the number of Young tableaux
            of the shape $(r+1)\times(\alpha_{r}+\beta_{r}+g-d+r)$.
            On the other hand, by considering placing a $3\times 4$ subrectangle 
            at the upper-left  of $\tau$, we have
            \begin{equation*}
            N(g,r,d,\alpha,\beta)\geq N(r+1,\alpha_{r}+\beta_{r}+g-d+r)\geq N(3,4).
            \end{equation*}
            For Young diagrams, the hook-length formula calculates the number 
            of all Young tableaux. Specifically,
            for a rectangle-shaped diagram of size $s\times t$, according to the hook-length formula,
            the number of all Young tableaux is
            \begin{equation*}
            N(s,t)=(st)!\prod_{i=0}^{s-1}\frac{i!}{(t+i)!}.
            \end{equation*}
            Thus, we have 
            \begin{equation*}
            N=N(g,r,d,\alpha,\beta)\geq N(3,4)=(12)!\prod_{i=0}^{2}\frac{i!}{(4+i)!}=231.
            \end{equation*}
           This implies
            \begin{equation*}
            (\frac{N}{3}-2\frac{\sqrt{N}}{3})\frac{1}{N}=\frac{1}{3}-\frac{2}{3\sqrt{N}}>\frac{1}{4},
            \end{equation*}
            Therefore, to apply Bochert's theorem, we only need to  identify 
            a nontrivial element in the EH group that moves at most 
            $\frac{1}{4}$
            of the Young tableaux. Lemma \ref{movefew} constructs
              such an element, establishing that the EH group is at least the
               alternating group. Since the EH group is a subgroup of 
               the monodromy group, it follows that the monodromy group is also 
               at least the alternating group, hence we are done.
            \end{proof}
            \vspace{\baselineskip}
            
            \section{Monodromy groups of families of linear series 
            with imposed ramifications}

            In this section, we utilize the lemmas proved 
            earlier to establish the main results.
            
            \begin{theorem}\label{monodromyalter}
            Given two ramification sequences $\alpha,\beta$. Suppose
            \begin{equation*}
            \rho(g,r,d,\alpha,\beta)=g-(r+1)(g-d+r)-|\alpha|-|\beta|=0
            \end{equation*}
            and $r\geq 2$, $\alpha_{r}+\beta_r+g-d+r>r+1$. Let $N=N(g,r,d,\alpha,\beta)$
            denote the number of Young tableaux of the Young diagram $\sigma=\sigma(g,r,d,\alpha,\beta)$.
            If $\alpha$, $\beta$ satisfy the conditions of
             Lemma \ref{generaltransitive},
            then the monodromy group of 
            $G^r_d(\mathcal{C}/B,(p,\alpha),(q,\beta))\to B$ in
             Theorem \ref{irred} is either the alternating group $A_N$ or 
             the symmetric group $S_N$.
            \end{theorem}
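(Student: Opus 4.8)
The plan is to obtain this as an immediate consequence of Theorem~\ref{alter} combined with the degeneration setup already employed in the proof of Theorem~\ref{irred}; no genuinely new argument is needed. First I would recall that, after shrinking $B$ so that $G=G^r_d(\mathcal{C}/B,(p,\alpha),(q,\beta))$ is smooth and generically \'etale over $B$, the monodromy group $M$ of $G\to B$ is a subgroup of the symmetric group on a general fiber, which by $\rho=0$ is a set of $N=N(g,r,d,\alpha,\beta)$ reduced points. As in the proof of Theorem~\ref{irred}, I would extend $G$ over the stable limits of $\mathcal{C}/B$ — this is possible by \cite{EH86}, and the extension is proper here because Theorem~\ref{chain} shows the relevant fibers carry no strictly crude limit linear series — and then invoke Harris's birational invariance of monodromy \cite{Ha79} to identify $M$ with a group acting on the fiber of $G$ over the boundary curve $C_\infty$. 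By Theorem~\ref{ytcorrsp} this fiber is canonically in bijection with $YT(g,r,d,\alpha,\beta)$, and under this bijection the one-parameter subfamilies $C_{i,p}$ act exactly through the EH group. Consequently the EH group sits inside $M$, and $M$ sits inside $\mathrm{Sym}(YT(g,r,d,\alpha,\beta))\cong S_N$.

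Next I would apply Theorem~\ref{alter}: under the present hypotheses ($r\ge 2$, $\alpha_r+\beta_r+g-d+r>r+1$, and the conditions of Lemma~\ref{generaltransitive}) the EH group is already either $A_N$ or $S_N$. Hence $A_N\subseteq M\subseteq S_N$. Since $A_N$ is a maximal subgroup of $S_N$ for every $N\ge 5$, and here $N\ge N(3,4)=231$ by the hook-length computation in the proof of Theorem~\ref{alter}, it follows that $M=A_N$ or $M=S_N$, which is the assertion.

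The only step demanding any care is the passage from the monodromy of the smooth family over the open part of $B$ to the combinatorial EH action on the special fiber over $C_\infty$; but this is precisely the mechanism already set up and used in Theorem~\ref{irred}, relying on the properness of $G\to B$ (so that the limit of the finite covering $G\to B$ along a one-parameter degeneration to $C_\infty$ remains a finite covering whose monodromy is generated by the $C_{i,p}$) together with \cite{Ha79}. I therefore do not expect a substantive obstacle: once Theorems~\ref{irred} and~\ref{alter} are established, this theorem is their common corollary, the final ingredient being purely group-theoretic — the maximality of $A_N$ inside $S_N$.
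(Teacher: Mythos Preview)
Your proposal is correct and follows essentially the same route as the paper's own proof: reduce to the special fiber $C_\infty$ via the degeneration argument already used for Theorem~\ref{irred}, observe that the EH group is contained in the monodromy group, and then invoke Theorem~\ref{alter} to conclude. The paper's proof is actually terser than yours---it simply cites the proof of Theorem~\ref{irred} for the containment of the EH group and then applies Theorem~\ref{alter}---so your additional remarks on properness, Harris's birational invariance, and the maximality of $A_N$ in $S_N$ only make explicit what the paper leaves implicit.
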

            \begin{proof}
            By the proof of Theorem \ref{irred}, the monodromy group of 
            $G^r_d(\mathcal{C}/B,(p,\alpha),(q,\beta))\to B$ is the same as
             the monodromy group of the fiber of the stable 
             limit $C_{\infty}$,
            and the EH group induced by the one-parameter families
             $C_{i,p}$ is a subgroup of the monodromy group.
             Since $\alpha,\beta$
            satisfy the conditions of Theorem \ref{alter}, 
            the EH group is either the alternating group or the
             symmetric group. Consequently,  the monodromy group
            is either the alternating group $A_N$ or the
             symmetric group $S_N$.
            \end{proof}
            \vspace{\baselineskip}

            For $r=2$, we can determine the EH group in some cases.
            \begin{proposition}
              Assume $\rho(g,r,d,\alpha,\beta)=0$. If $\alpha_0=\alpha_1=\alpha_2$, 
              $\beta_0=\beta_1=\beta_2$ and  $\alpha_0+\beta_0+g-d+r=2^i$
               for some integer $i>1$, then 
              the monodromy group of $G^r_d(\mathcal{C}/B,(p,\alpha),(q,\beta))\to B$ in
             Theorem \ref{irred} is the symmetric group.
            \end{proposition}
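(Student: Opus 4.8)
The plan is to reduce, via Theorem~\ref{monodromyalter}, to the construction of a single odd permutation inside the EH group, and then to produce one explicitly. Since $\alpha_0=\alpha_1=\alpha_2$ forces $r=2$ and the hypotheses make both $\alpha$ and $\beta$ constant, the skew diagram $\sigma=\sigma(g,2,d,\alpha,\beta)$ is an ordinary $3\times 2^i$ rectangle; indeed $g=(r+1)(\alpha_0+\beta_0+g-d+r)=3\cdot 2^i$. First I would check that the hypotheses of Theorem~\ref{monodromyalter} hold: $r=2\ge 2$, and $\alpha_r+\beta_r+g-d+r=\alpha_0+\beta_0+g-d+2=2^i>3=r+1$ since $i>1$; moreover, specialising conditions (1)--(4) of Lemma~\ref{generaltransitive} to constant $\alpha,\beta$ one finds that they all hold once $2^i>3$ (the only nontrivial one is (1), which becomes $1<\alpha_0+\beta_0+g-d=2^i-2$), in line with the examples following that lemma. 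Hence the monodromy group in Theorem~\ref{irred} contains the EH group and equals $A_N$ or $S_N$, and it suffices to exhibit an element of the EH group acting as an odd permutation of $YT(g,2,d,\alpha,\beta)$.

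For this I would take the generator $\pi_{t,L}$ with $L=2^i+1$, the maximal distance between two boxes of the rectangle $\sigma$, and $t=2^i+1$. The only pairs of boxes of $\sigma$ at distance $L$ are $\{(1,1),(3,2^i)\}$ and $\{(3,1),(1,2^i)\}$; since the first pair always carries the labels $1$ and $g$, which are never consecutive, $\pi_{t,L}$ fixes every Young tableau except those in which $t$ and $t+1$ occupy the opposite corners $(3,1)$ and $(1,2^i)$, and on those it swaps $t$ with $t+1$. Call a tableau of type $T$ if $t$ sits at $(3,1)$ and $t+1$ at $(1,2^i)$, and of type $T'$ if these two positions are exchanged; then $\pi_{t,L}$ matches the type-$T$ tableaux with the type-$T'$ tableaux bijectively. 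I would then check the elementary fact that in either type the boxes labelled $1,\dots,2^i$ fill the hook $(2^i-1,1)$ (giving $2^i-1$ possibilities), the boxes $t$ and $t+1$ are thereafter forced, and the boxes $t+2,\dots,g$ fill the skew complement of $(2^i,1,1)$ in $\sigma$, which is a $2\times(2^i-1)$ rectangle (giving $N(2,2^i-1)$ possibilities). Consequently $\pi_{t,L}$ moves exactly $2(2^i-1)\,N(2,2^i-1)$ tableaux, i.e.\ it is a product of $(2^i-1)\,N(2,2^i-1)$ transpositions.

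It then remains to see that this number of transpositions is odd. The factor $2^i-1$ is odd, and $N(2,2^i-1)=\tfrac{(2^{i+1}-2)!}{(2^i-1)!\,(2^i)!}$ is the $(2^i-1)$-st Catalan number, whose $2$-adic valuation, by Kummer's and Legendre's theorems (write $v_2$ for the $2$-adic valuation and $s_2$ for the binary digit-sum), is $s_2(2^i-1)-v_2(2^i)=i-i=0$; so $N(2,2^i-1)$ is odd as well. Hence $\pi_{t,L}$ is an odd permutation, the EH group is not contained in $A_N$, and by the dichotomy above it is $S_N$; since the monodromy group of Theorem~\ref{irred} contains the EH group and lies in $S_N$, it too is $S_N$.

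The only delicate point is the combinatorial bookkeeping in the middle paragraph: one must verify that no other tableaux are moved, that the label-swap defining $T\leftrightarrow T'$ always yields a genuine standard Young tableau (so the two families have equal size), and that the residual shapes are precisely the hook $(2^i-1,1)$ and the $2\times(2^i-1)$ rectangle. Once this is in place the parity input is just the classical fact that $C_{2^i-1}$ is odd. (Any $\pi_{2^i+c,L}$ with $1\le c\le 2^i-1$ would serve equally well: the analogous count gives twice the product of the number of standard tableaux of shape $(2^i-1,c)$ and of the skew shape $(2^i,2^i)/(c,1)$, and a slightly longer digit-sum computation shows both factors are always odd; the choice $c=1$ merely minimises the bookkeeping.)
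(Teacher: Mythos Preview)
Your proposal is correct and follows essentially the same route as the paper: reduce to the $A_N$/$S_N$ dichotomy via Theorem~\ref{monodromyalter}, then exhibit an odd EH-generator by taking the maximal-distance swap $\pi_{2^i+1,\,2^i+1}$ on the $3\times 2^i$ rectangle, count the moved tableaux as $(2^i-1)\cdot C_{2^i-1}$ via the hook/rectangle decomposition, and invoke the oddness of $C_{2^i-1}$. The only differences are cosmetic: you verify the hypotheses of Lemma~\ref{generaltransitive} explicitly and compute the $2$-adic valuation of $C_{2^i-1}$ directly via Kummer, whereas the paper cites \cite{AK73}.
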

            \begin{proof}
              The given $\alpha$ and $\beta$ satisfy the conditions of Lemma
              \ref{generaltransitive}, hence by Theorem \ref{monodromyalter}, 
              the monodromy group is at least the alternating group. To show
              that it is indeed the full symmetric group, it suffices
              to find an odd permutation in it.
              Let $L=\alpha_0+\beta_0+g-d+r$. We prove that the EH group
              action 
              $\pi=\pi_{2+L-1,2+L}$ exchanging $2+L-1$ 
              with $2+L$ is odd when $L=2^i$ for some integer $i$. We consider the
               number $n$
              of pairs of Young tableaux exchanged by this action. 
              Given that the set of integers in the first row
              and the first column is determined, there are $L-1$ choices for 
              the first row and column.
              After removing the first row and the first
                column
              the resulting diagram is a $2\times(L-1)$ rectangle. The number of
               Young tableaux of
              this diagram is the Catalan number $C_{L-1}$. A result about the 
              oddity of Catalan numbers (see \cite{AK73}) states that $C_{L-1}$ is odd 
              if and only if $L-1=2^i-1$
              for some integer $i$. Therefore, when $L=2^i$ for some $i$, the number 
              $n=(L-1)*C_{L-1}$ is odd. This implies that $\pi$ is an
               odd permutation, 
              so that the monodromy group is the full symmetric group.
            \end{proof}
            \vspace{\baselineskip}
            
            When $r=1$, we can prove a stronger result:
             the monodromy group is
             indeed the full symmetric group.
            
            \begin{lemma}
            If $r=1$,
            then the EH group is the full symmetric group.
            \end{lemma}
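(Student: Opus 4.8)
The plan is to combine the double transitivity that is already available when $r=1$ with a classical fact from permutation group theory, and then to produce an explicit transposition among the generators $\pi_{t,a}$.

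First I would record that when $r=1$ all four hypotheses of Lemma~\ref{generaltransitive} hold vacuously (as observed in the example immediately following it), so the EH group acts doubly transitively on the set $YT(g,1,d,\alpha,\beta)$ of Young tableaux of the two-row skew shape $\sigma=\sigma(g,1,d,\alpha,\beta)$. Since a doubly transitive subgroup of $S_N$ that contains a transposition must be the full symmetric group $S_N$ (a classical theorem of Jordan), it then suffices to exhibit a single generator $\pi_{t,a}$ that acts as a transposition. The cases $N=N(g,1,d,\alpha,\beta)\le 1$ are trivial and may be set aside at the outset.

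Next I would use the two extreme boxes of $\sigma$. Writing $\sigma=\lambda/\mu$, with $\lambda$ and $\mu$ read off from $\alpha$, $\beta$ and $g-d+1$, the leftmost box $b$ of $\sigma$ is the first box of the lower row and the rightmost box $e$ is the last box of the upper row; because $\sigma$ has only two rows, the maximal distance $L$ between two boxes of $\sigma$ is attained by the single pair $\{b,e\}$, and for $N\ge 2$ the boxes $b$ and $e$ lie in distinct rows and distinct columns, so $\pi_{t,L}$ is a genuine generator. A tableau is moved by $\pi_{t,L}$ precisely when its entries $t,t+1$ fill $\{b,e\}$. Here the shape is very rigid: $b$ is the unique minimal box and $e$ the unique maximal box of the box poset of $\sigma$, so the entries below $\min(t,t+1)$ must form an initial segment of the upper row and the entries above $\max(t,t+1)$ a final segment of the lower row; counting boxes forces $t$ to equal the length of the upper row, and then column strictness determines the remainder of each filling uniquely in the two subcases ($t$ in $b$, or $t$ in $e$). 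Hence $\pi_{t,L}$ transposes exactly those two tableaux and fixes every other one, which is the transposition we need.

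The step I expect to be the main obstacle is this last rigidity computation: pinning down $\sigma$ as an explicit skew diagram $\lambda/\mu$ and then verifying that the two forced fillings are genuinely standard Young tableaux. The only substantive point is column strictness in the columns where the two rows overlap, which reduces to the inequality asserting that the upper row extends strictly to the right of where the lower row begins — true by the construction of $\sigma$ around the $2\times(g-d+1)$ rectangle. I would therefore package the explicit description of $\sigma$ as a brief preliminary computation, so that all the indices in the rigidity argument become concrete, and also use that description to dispatch uniformly any degenerate shapes (e.g.\ an empty central rectangle).
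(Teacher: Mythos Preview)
Your proposal is correct and follows essentially the same route as the paper: invoke double transitivity via Lemma~\ref{generaltransitive} (vacuous for $r=1$), exhibit a transposition by using the lower-left and upper-right corner boxes at maximal distance $L$ (the paper's two cases $\pi_{s+t-2,L}$ and $\pi_{s+t-1,L}$ both reduce to your $\pi_{t,L}$ with $t$ the length of the top row), and then conclude that the group is all of $S_N$, the paper writing out the conjugation argument where you cite Jordan. One small caveat: your claim that $b$ and $e$ are the \emph{unique} minimal and maximal boxes of the box poset is not literally true---in the genuinely skew case there are two minimal and two maximal boxes, and in the straight case $b=(2,1)$ sits below $(1,1)$ and is not minimal---but your rigidity argument only really uses that $b$ is the first box of row~2 and $e$ the last box of row~1, so this slip is harmless.
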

            \begin{proof}
            Let $\sigma$ be the Young diagram associated to the 
            ramification sequences $\alpha,\beta$.
            We consider the EH group element $\pi_{s+t-2,L}$ or $\pi_{s+t-1,L}$ 
            constructed in the proof of Lemma \ref{movefew}, where
            $s$ is the length of the first column of $\sigma$, $t$ is the
             length of the first row of $\sigma$, and $L$ denotes the distance from
              the lower-left corner to the upper-right corner of $\sigma$.
            
            If the first row and the first column share common boxes, we take $\pi=\pi_{s+t-2,L}$.
            Let $T$ be a Young tableau of $\sigma$ and be moved by $\pi$, 
            then the integer  in the lower-left corner of $T$
            is either $s+t-2$ or $s+t-1$ and $T_{0,0}$ must be $1$. Hence, 
            integers in the boxes 
            except for the last one in the first row and the first column are 
            uniquely determined.
            Since $r=1$, $\sigma$ has only two rows. Therefore, integers
             greater than $s+t-1$ can only be arranged sequentially in the 
             remaining boxes of the second row,
             Thus,  the integers in the boxes in $T$, except 
             for the lower-left corner and the upper-right corner, 
             are uniquely determined.
            Therefore, the action of $\pi$ on all Young tableaux is just a 
            transposition, moving only $T$ and $\pi(T)$.
            
            If the first row and the first column do not share common boxes, 
            we take $\pi=\pi_{s+t-1,L}$. Let $T$ be a Young tableau 
            of $\sigma$  moved by $\pi$.
            Similarly, except for the lower-left corner and the upper-right corner, 
            the integers in the boxes of $T$ are uniquely determined.
            Hence the action of $\pi$ on all Young tableaux is also a transposition.
            
            Given that $\alpha,\beta$ satisfy the conditions of Lemma
             \ref{generaltransitive},
            according to Lemmas \ref{generaltransitive}, 
            we know that the EH group is doubly transitive.
            Let $\pi=(T,T')$ be a transposition where $T,T'\in YT(g,r,d,\alpha,\beta)$.
            For any pair of distinct Young tableaux $S,S'$, 
            the  doubly
             transitivity of the EH group ensures there exists an EH group element $g$ such that
            \begin{equation*}
            g(T)=S,\: g(T')=S'.
            \end{equation*}
            Consider the group element $g\circ \pi\circ g^{-1}$. Then
            \begin{equation*}
            g\circ \pi\circ g^{-1}(S)=g\circ\pi(T)= S',\:
            g\circ \pi\circ g^{-1}(S')=g\circ\pi(T')= S.
            \end{equation*}
            For any $U\neq S,S'$, we have
            \begin{equation*}
            g\circ \pi\circ g^{-1}(U)=g\circ \pi(g^{-1}(U))=g(g^{-1}(U))=U,
            \end{equation*}
            showing that $g\circ \pi\circ g^{-1}$ is the transposition $(S,S')$.
            Therefore, the EH group
            includes all transpositions. It follows that the EH group is 
             the full symmetric group.
            
            \end{proof}
            \vspace{\baselineskip}
            
            Thus, we obtain
            \begin{proposition}\label{rank1case}
            Given two ramification sequences $\alpha,\beta$. Suppose
            \begin{equation*}
            \rho(g,r,d,\alpha,\beta)=g-(r+1)(g-d+r)-|\alpha|-|\beta|=0
            \end{equation*}
            and let $N=N(g,r,d,\alpha,\beta)$.
            If $r=1$, then
            the monodromy group of the 
            $G^r_d(\mathcal{C}/B,(p,\alpha),(q,\beta))\to B$ in
             Theorem \ref{irred} is the symmetric group $S_N$.
            \end{proposition}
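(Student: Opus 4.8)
The plan is to obtain the proposition by combining the lemma just established (which identifies the EH group when $r=1$) with the monodromy reduction carried out in the proof of Theorem~\ref{irred}. First I would observe that when $r=1$ the skew Young diagram $\sigma(g,1,d,\alpha,\beta)$ has exactly two rows, so all of the hypotheses of Lemma~\ref{generaltransitive} hold vacuously; hence the EH group acts doubly transitively on $YT(g,1,d,\alpha,\beta)$, and, by the lemma proved immediately above, it is in fact the full symmetric group on that set. Since by Theorem~\ref{ytcorrsp} the limit linear series on $C_\infty$ correspond bijectively to Young tableaux of $\sigma$, whose number is $N=N(g,1,d,\alpha,\beta)$, the EH group equals $S_N$.

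Next I would invoke the argument used in the proofs of Theorem~\ref{irred} and Theorem~\ref{monodromyalter}: after shrinking $B$ so that $G^r_d(\mathcal{C}/B,(p,\alpha),(q,\beta))$ is smooth, the family extends over the stable limit along one-parameter families, and since the monodromy group is a birational invariant it coincides with the monodromy group of the fiber over $C_\infty$, inside of which the EH group sits as a subgroup. That fiber consists of $N$ points (the limit linear series, equivalently the Young tableaux of $\sigma$), so the monodromy group is a subgroup of $S_N$; as it already contains the EH group, which is all of $S_N$, it must equal $S_N$. Transporting this back along the birational identification gives that the monodromy group of $G^r_d(\mathcal{C}/B,(p,\alpha),(q,\beta))\to B$ is $S_N$, as claimed.

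The substantive content therefore lies entirely in the lemma about the $r=1$ case, and the step I expect to be the crux there is checking that the distinguished element $\pi_{s+t-2,L}$ (respectively $\pi_{s+t-1,L}$, according to whether the first row and first column of $\sigma$ overlap) acts as a single transposition: one must argue that maximality of the box-distance $L$ forces the two extremal corner boxes to carry the two largest ``early'' labels, that this pins down every entry of the first row and first column, and that --- because there are only two rows --- the remaining entries of the second row are then forced with no freedom left, so exactly one pair of tableaux is moved. Granting that, double transitivity upgrades this one transposition to every transposition (conjugate $\pi$ by a group element sending the moved pair to any prescribed pair), hence to all of $S_N$, and the rest of the argument is formal.
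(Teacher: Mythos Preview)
Your proposal is correct and follows essentially the same approach as the paper: the proposition is deduced from the preceding $r=1$ lemma (EH group is all of $S_N$) together with the reduction, already carried out in Theorems~\ref{irred} and~\ref{monodromyalter}, showing that the monodromy group contains the EH group. Your third paragraph accurately identifies the substantive content as residing in that lemma rather than in the proposition itself.
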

            \begin{proof}
            Similar to the proof of Theorem \ref{monodromyalter}, the monodromy group 
            of $G^r_d(\mathcal{C}/B,(p,\alpha),(q,\beta))\to B$ contains the EH group.
            Given that the EH group is the full symmetric group,
              it follows that the monodromy group is $S_N$.
            \end{proof}

\bibliographystyle{plain}
\bibliography{mypaper}

\end{document}